\documentclass[11pt,reqno,a4paper]{amsart}
\usepackage[utf8]{inputenc}
\usepackage[headings]{fullpage} 
 
\usepackage[english]{babel}
\usepackage[T1]{fontenc}         
\usepackage{lmodern}  
\usepackage{amsmath,amsthm,amsfonts,amssymb,bm,thmtools}           
\usepackage{mathrsfs}

\usepackage{graphicx}
\usepackage{tikz-cd}
\usepackage{rotating}

\usepackage{todonotes}

\definecolor{blue-grey}{HTML}{4A90E2}

\usepackage{url}
\usepackage[colorlinks,backref=page,allcolors=blue-grey]{hyperref}
\usepackage[alphabetic,backrefs,lite]{amsrefs}
\usepackage[capitalize,nameinlink,noabbrev]{cleveref} 
\crefname{page}{page}{pages}

\DeclareMathOperator{\Sym}{\mathbf{Sym}}

\DeclareMathOperator{\CEff}{\mathrm{Eff}} 
\DeclareMathOperator{\Hom}{\mathrm{Hom}} 
\DeclareMathOperator{\HHom}{\mathbf{Hom}} 
\DeclareMathOperator{\Spec}{\mathrm{Spec}} 

\DeclareMathOperator{\Pic}{\mathrm{Pic}} 
\DeclareMathOperator{\PPic}{\mathbf{Pic}} 
\DeclareMathOperator{\Div}{\mathrm{Div}} 
\DeclareMathOperator{\PDiv}{\mathrm{PDiv}} 
\DeclareMathOperator{\PDDiv}{\mathbf{PDiv}} 

\DeclareMathOperator{\pr}{\mathrm{pr}} 
\DeclareMathOperator{\supp}{\mathrm{sup}} 
\DeclareMathOperator{\rk}{\mathrm{rk}}  
\DeclareMathOperator{\rg}{\mathrm{rk}}  
\DeclareMathOperator{\LCM}{\mathrm{lcm}} 

\DeclareMathOperator{\four}{\mathscr{F}}
\DeclareMathOperator{\ev}{\mathrm{ev}}

\newcommand{\KVar}[1]{\mathrm{K_0} \mathbf{Var}_{#1} } 

\DeclareMathOperator{\Fil}{\mathrm{Fil}}

\newcommand{\KCharVar}[2]{\mathrm{K_0} \mathbf{Char}_{#1} \mathbf{Var}_{#2}} 
\newcommand{\CharM}[2]{\mathcal{C}\mathit{har}_{#1}\mathscr{M}_{#2}}
\newcommand{\Char}{\mathcal{C}\mathit{har}}

\newcommand{\PL}{\mathrm{PL}}

\DeclareMathOperator{\degg}{\mathbf{deg}}  
\DeclareMathOperator{\divv}{\mathrm{div}}  

\newcommand{\1}{\mathbf 1}

\newcommand{\aA}{\mathbf{a}} 
\newcommand{\bB}{\mathbf{b}}
  
\newcommand{\dd}{{\bm{d}}}

\newcommand{\tT}{\mathbf{t}}

\newcommand{\CC}{\mathbf{C}}

\newcommand{\KK}{\mathbf{K}}
\newcommand{\PP}{\mathbf{P}}
\newcommand{\RR}{\mathbf{R}}
\newcommand{\ZZ}{\mathbf{Z}}
\newcommand{\NN}{\mathbf{N}}
\newcommand{\TT}{\mathbf{T}}
\newcommand{\LL}{\mathbf{L}}
\newcommand{\GG}{\mathbf{G}}

\newcommand{\OO}{\mathcal O}

\newcommand{\CCC}{\mathscr{C}}

\newcommand{\FFF}{\mathscr{F}}

\newcommand{\MMM}{\mathscr{M}}
\newcommand{\OOO}{\mathscr{O}}

\newcommand{\Kapr}{\mathrm{Kapr}}

\newcommand{\mdeg}{\delta}
\newcommand{\scdot}{\,\cdot \,}
\newcommand{\kb}{k} 

\newcommand{\Fun}{\mathrm{Fun}}

\theoremstyle{plain}
\newtheorem{theorem}{Theorem}[section]
\newtheorem{lemma}[theorem]{Lemma}
\newtheorem{proposition}[theorem]{Proposition}
\newtheorem{cor}[theorem]{Corollary}
\newtheorem{theoremintro}{Theorem}

\theoremstyle{definition}
\newtheorem*{defintro}{Definition}
\newtheorem{definition}[theorem]{Definition}
\newtheorem{notation}[theorem]{Notation}

\newtheorem{example}[theorem]{Example}

\theoremstyle{remark}
\newtheorem{remark}[theorem]{Remark}

\usepackage{marginnote}

\DeclareSymbolFont{yhlargesymbols}{OMX}{yhex}{m}{n} \DeclareMathAccent{\yhwidehat}{\mathord}{yhlargesymbols}{"62}

\title[A motivic Poisson formula for algebraic tori]{A motivic Poisson formula for split algebraic tori \\ with an application to motivic height zeta functions}

\author[M. Bilu]{Margaret Bilu}
\address{Centre de Mathématiques Laurent Schwartz, École polytechnique, France}
\email{margaret.bilu@polytechnique.edu}

\author[L. Faisant]{Loïs Faisant}
\address{KU Leuven, Department of Mathematics, B-3001 Leuven, Belgium}
\email{lois.faisant@kuleuven.be}

\begin{document}

\maketitle


\begin{abstract}
    We prove a motivic version of the Poisson formula on the adelic points of a split algebraic torus
    and apply it to the study of the motivic height zeta function of split projective toric varieties,
    in the context of the motivic Manin-Peyre principle. 
\end{abstract}

\setcounter{tocdepth}{1}
\tableofcontents 


\section*{Introduction}

Harmonic analysis stands as one of the foundational tools in the number theorist’s arsenal for addressing counting problems. Within the framework of the Batyrev-Manin-Peyre conjectures—which predict the distribution of rational points on Fano varieties over global fields—this method has been instrumental since the field’s inception.

Classically,  the counting problem is encoded into a
height zeta function on which harmonic analysis is performed. 
In that context, an important role is played by the Poisson formula. 

The goal of the present work is to state and prove a geometric-motivic analogue of the Poisson formula for split algebraic tori 
and apply it to the study of the motivic height zeta function of toric varieties
over function fields of curves. 
In particular, we recover a motivic stabilisation result 
for the class of the moduli space of high degree morphisms 
from a smooth and geometrically irreducible projective curve 
to a smooth split projective toric variety
that was previously obtained by the second author 
using a different approach  \cite{faisant2025motivic-distribution,faisant2025universaltorsors}.
This work also extends residue-type results of Bourqui \cite{bourqui2009produit}
and Bilu--Das--Howe \cite{bilu-das-howe2022zeta}. 


\subsection*{The classical setting}
Let us recall the classical Poisson formula 
in its form used by Batyrev and Tschinkel in 
their proof of Manin's conjecture 
for toric varieties over number fields
\cites{batyrev-tschinkel1995anisotropic-toric,batyrev-tschinkel1998manin-toric}
and function fields of curves over finite fields \cite{bourqui2011MAMS}.
Let $\mathscr G$ be a locally compact topological commutative group, endowed with a certain Haar measure $ \mathrm d x$,
and let $\mathscr H \subset \mathscr G$ be a cocompact discrete subgroup of $\mathscr G$.
We give ourselves a function $f : \mathscr H \to \mathbf R$ 
and assume that $f$ can be extended to an $\mathrm L^1$-function on the whole of $\mathscr G$.
Finally, 
up to normalising $\mathrm d x$, one can assume that $\mathscr H $ has covolume $1$ in $\mathscr G$. 

Assume that the Fourier transform $\hat f$
of $f$ with respect to $\mathrm d x$ is an $\mathrm L^1$-function
on the group $\mathscr H^\perp$ of characters of $\mathscr G$ which are trivial on $\mathscr H$.
Then 
\begin{equation}\label{equ:classical_poisson_summation}
\sum_{x \in \mathscr H}
f ( x ) 
=
\sum_{\chi \in \mathscr H ^\perp}
\hat f ( \chi ) .
\end{equation}
Given a global field $K$, 
Batyrev--Tschinkel and Bourqui
apply this formula to the adelic points
\[
\mathscr G = U ( \mathbb A_K )
\]
of an algebraic torus $U$ defined over $K$,
with $\mathscr H = U ( K )$ being its set of $K$-rational points
diagonally embedded in $U ( \mathbb A_K )$.


\subsection*{Moduli spaces of curves and motivic Batyrev-Manin-Peyre principle}

Assume now that $K$ is the function field of a geometrically irreducible smooth projective curve $\mathscr C$ defined over an absolute base field $k$ of characteristic zero
and let $X$ be a projective variety defined over $k$.
By the valuative criterion of properness, the set of $K$-rational points of $X$
can be identified with the set of $k$-morphisms ${\mathscr C \to X}$. 
Such morphisms are parametrised by a $k$-scheme $\Hom_k ( \mathscr C , X ) $. 
Moreover, 
any morphism $f : \mathscr C \to X$
induces a linear form $\deg ( f ) \in \Pic ( X )^\vee $,
called the \emph{multidegree},
which sends the class of a line bundle to the degree of its pull-back to the curve $\mathscr C$. 
For any $\delta \in \Pic ( X )^\vee$,
the condition $\deg ( f ) = \alpha$ defines a subscheme 
$\Hom_k^\delta ( \mathscr C , X ) $ of finite type. 
Finally, if $U$ is a non-empty open subset of $X$,
we denote by $\Hom_k^\delta ( \mathscr C , X )_U$ 
the submoduli space whose points correspond to morphisms 
sending the generic point of $\mathscr C$ into $U$. 

Assume that $X$ is a Fano-like variety in the sense of \cite[Definition 1]{faisant2025motivic-distribution} (for example, a Fano variety or a split smooth projective toric variety). 
The motivic Batyrev-Manin-Peyre principle 
is a set of predictions concerning the behaviour 
of the class of $\Hom_k^\delta ( \mathscr C , X )_U$ in a suitable localisation of the Grothendieck ring of $k$-varieties $\KVar k $. 
As a group, $\KVar k $ is defined by generators and relations:
generators are isomorphism classes $[Y]$ of $k$-varieties 
and relations are of the form
\[
[ Y ] - [ Z ] - [ Y - Z  ]
\]
whenever $Z$ is a closed subscheme of a variety $Y$. 
The ring structure of $\KVar k $ 
is given by taking Cartesian products of varieties:
\[
[Y_1 ] [ Y_2 ]
=
[Y_1 \times_k Y_2] . 
\]
The class of the affine line plays a special role and is denoted by $\mathbf L_k$. It is convenient to invert it and work in the localisation 
\[
\mathscr M_k = 
\KVar k [ \mathbf L_k^{-1} ] . 
\]
The ring $\mathscr M_k$
admits a filtration by the virtual dimension
and a corresponding completion $\widehat{\mathscr M_k }^{\dim}$
which allows one to define sums of absolutely convergent series. 

The most basic version of the motivic Batyrev-Manin-Peyre principle 
asks whether the normalised class 
\begin{equation}\label{equation:BMP}
\left [
\Hom_k^\delta ( \mathscr C , X )_U
\right ]
\mathbf L_k^{- \delta \scdot \omega_X^{-1}} 
\end{equation}
stabilises (if necessary, in a well-chosen completion of $\mathscr M_k$) as $\delta \in \Pic ( X )^\vee$
lies inside the movable cone of $X$
and
goes arbitrarily far away from its boundaries. 
This question is closely related to the study of the following series.

\begin{defintro}
Let $\mathbf{T}$ be a family of formal variables indexed by a basis of $\Pic ( X )^{\vee}$. 
The multivariate motivic height 
zeta function is the formal series
\[
\zeta_H^{\mathrm{mot}} ( \TT )
=
\sum_{\delta \in \Pic ( X )^\vee}
\left 
[
\Hom^\delta_\kb ( \CCC , X )_U
\right 
]
\TT^\delta . 
\]
\end{defintro}


\subsection*{Split toric varieties and main results}

In this article we focus on the case of smooth, projective and split toric varieties.

Let $U \simeq \mathbf G_m^n$ be a split torus 
and $X = X_\Sigma$ a split smooth projective variety 
over a field $k$
compactifying $U$, defined by a certain fan $\Sigma$
of the lattice of cocharacters 
$\mathcal X_* ( U ) = \Hom_\mathrm{gp} ( \mathbf G_m , U )$
of $U$. Let $r$ be the Picard rank of $X_\Sigma$ and $\omega_{X_\Sigma}$ its canonical line bundle.

\begin{theoremintro}[Meromorphic continuation]
\label{thm-intro-residue}
There exists an $\eta >0$ 
and an integer $a_{X_\Sigma}$
such that 
the formal series 
\[
( 1 -  ( \LL T )^{^{a_{X_\Sigma}}} )^
{\rk ( \Pic ( X_\Sigma ) )}
\zeta_{H}^{\mathrm{mot}}
\left ( 
        T^{\omega_{X_\Sigma}^{-1}} 
\right )  
\] 
converges for $| T | < \LL_\kb^{-1 + \eta }$,
with its value at $\LL^{-1}$
being a non-zero element of $\widehat{\MMM_\kb}^{\dim}$.
\end{theoremintro}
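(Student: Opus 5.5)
We will follow the strategy of Batyrev--Tschinkel and Bourqui, transposed to the motivic setting via the motivic Poisson formula announced above. The proof has three steps: a motivic Poisson formula that trades the height zeta function for a sum of Fourier transforms, a computation of those transforms as motivic Euler products, and a bound on the tail.

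\emph{Step 1: rewrite via Poisson.} The first step is to express $\zeta_H^{\mathrm{mot}}(\TT)$ as a motivic "sum over rational points" of a height function on the adelic points $U(\mathbb A_K)$. Using the fan $\Sigma$, a morphism $\CCC \to X_\Sigma$ with generic point in $U$ corresponds to a family of effective divisors on $\CCC$ indexed by the rays of $\Sigma$, subject to linear equivalence constraints. The motivic height function $H(\cdot,\TT)$ is then a product over closed points of $\CCC$ of local functions on $U(K_v)/U(\mathcal O_v) \simeq \mathcal X_*(U)$. We then apply the motivic Poisson formula for split tori with $\mathscr H = U(K)$. This writes $\zeta_H^{\mathrm{mot}}$ as a sum, over characters trivial on $U(K)U(\mathcal O_{\mathbb A})$, of Fourier transforms $\hat H(\chi,\TT)$. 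In the split, everywhere unramified situation these characters reduce to the discrete part $\Hom(\mathcal X_*(U)\otimes \Pic^0\text{-data}, \ldots)$, that is, essentially characters of the degree lattice $\Pic(X_\Sigma)^\vee \to \ZZ$. This turns the character sum into an averaging over roots of unity, and that averaging is where the integer $a_{X_\Sigma}$ (a torsion/index exponent) enters.

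\emph{Step 2: Euler products.} Next, compute each $\hat H(\chi,\TT)$ as a motivic Euler product in the sense of Bilu. Locally at a closed point $v$, the Fourier transform is a sum over the cones of $\Sigma$ of geometric series. This yields the factorisation
\[
\hat H(\chi,\TT)=\prod_{\rho\in\Sigma(1)} Z_{\CCC}\bigl(\chi_\rho(\TT)\bigr)\cdot \prod_{v} \bigl(1+\text{terms of weight}\le -2\bigr),
\]
where $Z_\CCC$ is Kapranov's motivic zeta function of the curve and $\chi_\rho(\TT)$ denotes the corresponding monomial. After restricting to the anticanonical line $\TT=T^{\omega^{-1}}$, every $\chi_\rho(\TT)$ becomes $T$ times a root of unity. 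Since $Z_\CCC$ is rational with a simple pole at $\LL^{-1}$, each factor is controlled once we multiply by $(1-\LL T)$. Because $|\Sigma(1)|-n=r$, after the Poisson identification only $r$ independent factors survive; replacing $(1-\LL T)$ by $(1-(\LL T)^{a})$ absorbs the poles coming from twisted values. We then check that the remaining Euler product converges for $|T|<\LL^{-1+\eta}$, using the weight bound of the local terms and Bilu's convergence criteria for motivic Euler products. Its value at $\LL^{-1}$ is nonzero because it is a product of local factors, each with leading term $1$, together with a nonzero residue of $Z_\CCC$ (of the form $[\PPic^0]\LL^{1-g}/(\LL-1)$).

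\emph{Step 3: the main obstacle.} The hardest step is showing that the sum over the infinitely many nontrivial characters converges uniformly for $|T|<\LL^{-1+\eta}$ in $\widehat{\MMM_\kb}^{\dim}$, and that it contributes no pole of maximal order. Classically this is handled by integrating $\hat H$ over the continuous dual and bounding the $L$-functions. Motivically, we would first try to show that for characters outside a finite set, the weight of $\hat H(\chi,T)$ drops linearly in the "size" of $\chi$. The tool here is the dimension estimates for the cohomology of the motivic Fourier kernel, namely the weight bounds for twisted symmetric products of $\CCC$, which give that $\LL^{-m}$-type cancellation. We would then sum the resulting geometric series. If this uniformity fails, the fallback is to truncate by the filtration and invoke the convergence lemmas accompanying the Poisson formula directly.
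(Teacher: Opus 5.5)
Your Steps 1 and 2 agree in outline with the paper: summation over principal divisors, the motivic Poisson formula, the local transforms $Q_\Sigma(\bm\chi_v\TT)/\prod_\alpha(1-\bm\chi_v(\rho_\alpha)T_\alpha)$, and convergence of $\prod_v Q_\Sigma$ because $Q_\Sigma-1$ has only monomials of degree at least $2$. The part that actually produces the pole of order $r=\rk\Pic(X_\Sigma)$, the integer $a_{X_\Sigma}$ and convergence on $|T|<\LL^{-1+\eta}$ is missing, however, and what you put in its place cannot work. In this framework $\int_{\PDiv^\perp}$ is a formal orthogonality operator on classes with characters. There are no individual non-trivial characters whose contributions could be bounded, no notion of the ``size'' of $\chi$, and no weight estimate of $\hat H(\chi,T)$ at a given $\chi$ to be summed. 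So Step 3, and its unspecified fallback, has no content.

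Several identifications along the way are also wrong. The dual side is not made of characters of $\Pic(X_\Sigma)^\vee$. It splits as the torus $D(\mathcal X_*(U))\simeq\GG_m^n$ of degree characters $\bm z$ times a degree-zero part, and $\Pic(X_\Sigma)^\vee=\ker\gamma^\vee$ only appears as the output of integrating out $\bm z$. On the anticanonical line, $\langle\bm z,\rho_\alpha\rangle T$ is not ``$T$ times a root of unity'': you must integrate over $\bm z$ before specialising $\TT$. And before the degree-zero characters are removed, the factors are motivic $L$-functions $L(\bm\chi(\rho_\alpha)\langle\bm z,\rho_\alpha\rangle,T_\alpha)$, not $Z_\CCC$.

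The missing ideas are two exact reductions. First, the degree-zero characters are removed geometrically rather than by bounding twisted $L$-functions. For all but finitely many $\bm d$, the $(\PDiv^0)^\perp$-integral has the same coefficients as $[\PPic^0(\CCC)]^{-n}\prod_\alpha Z^{\Kapr}_\CCC(\langle\bm z,\rho_\alpha\rangle\LL^{-1}T_\alpha)\prod_v Q_\Sigma$, using the Abel--Jacobi map and the fact that the fibres of $\gamma^\vee$ on Picard schemes are $\PPic^0(\CCC)^r$. Second, the $\bm z$-integral is computed exactly by the motivic version of Bourqui's \emph{lemme technique}. Take $N=\PL(\Sigma)^\vee$, $M=\Pic(X_\Sigma)^\vee$, $\Lambda$ the positive orthant, and $P=\mathfrak L_{N,\Lambda,\mathfrak a}$. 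Orthogonality turns $\int_{D(\mathcal X_*(U))}\mathfrak L_{N,\Lambda,\mathfrak a}(\hat j(\bm z)\TT)\,\mathfrak L_{N,\Lambda}(\hat j(\bm z)\TT)\,\mathrm d\bm z$ into $\sum_{y_1}\mathfrak a(y_1)\mathfrak L_{N,\Lambda\cap M\cap(y_1+\Lambda)}(\TT)$. The cone-decomposition lemmas, together with $\gamma(\mathcal X^*(U))\cap\Lambda^\vee=\{0\}$ (completeness of $\Sigma$), then split this into a main term $P(\bm 1)\,\mathfrak L_{M,\CEff(X_\Sigma)^\vee}(\TT)$ plus admissible functions of multiplicity at least $-(r-1)$.

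So the pole of order $r$ comes from the cone series of the rank-$r$ lattice $M$ restricted to the anticanonical line, not from ``$r$ surviving zeta factors''. Likewise $a_{X_\Sigma}=\LCM_\ell\langle\omega_{X_\Sigma}^{-1},\rho_\ell\rangle$, taken over the rays of a regular fan supported on $\CEff(X_\Sigma)^\vee$; it is not a torsion index from averaging over roots of unity. The special value accordingly carries the factor $a_{X_\Sigma}^{r}\alpha^*(X_\Sigma)$, which your non-vanishing argument does not account for.
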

We refer to \cref{theorem:final-residue-type-result} for a more detailed version. 
The following second statement is compatible with the predictions 
from \cite{faisant2025motivic-distribution}. 

\begin{theoremintro}[Multi-height motivic stabilisation]
\label{thm-intro-stabilisation-multi-height}
As the class
\[
\mdeg \in \Pic ( X_\Sigma )^\vee \cap \CEff ( X_\Sigma )^\vee
\]
goes arbitrarily far away from the boundary of $\CEff ( X_\Sigma )^\vee$, the normalised class 
\[
\left 
[
\Hom^\delta_\kb ( \CCC , X_\Sigma )_U
\right 
]
\LL_{\kb}^{-\, \delta \scdot \omega_{X_\Sigma}^{-1}}
\]
tends to the non-zero motivic Euler product 
\[
\LL_\kb^{( 1 - g ( \CCC ) ) \dim ( X_\Sigma )  } 
\left ( 
\frac{ \left  [ \Pic^0 ( \CCC ) \right ]\LL_\kb^{1-g ( \CCC ) } }{ 
\LL_\kb - 1 
}
\right )^{\rg (\Pic ( X_\Sigma ))}
 \prod_{p\in \CCC} 
\left  (1-\LL_{\kappa (p)}^{-1} \right )^{\rg (\Pic (X_\Sigma))}  \frac{[X_{\kappa ( p )}]}{\LL_{\kappa ( p )}^{\dim ( X_\Sigma )}} .  
\]
\end{theoremintro}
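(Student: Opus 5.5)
The plan is to deduce the stabilisation from a multivariate version of the motivic Poisson formula applied to the height zeta function, following the Batyrev--Tschinkel strategy. Write the multivariate height zeta function $\zeta_H^{\mathrm{mot}}(\TT)$ as a motivic sum over the $K$-points of the torus $U$, that is, over $U(K)$ modulo the compact part, of a height function $H(x,\TT)^{-1}$. This function is a product of local factors indexed by the points $p\in\CCC$, and each local factor depends only on the image of $x$ in $U(K_p)/U(\OO_p)\simeq \mathcal X_*(U)$. The motivic Poisson formula (the main theorem of the paper, taken as given) then expresses $\zeta_H^{\mathrm{mot}}$ as a sum over characters trivial on $U(K)$ of the motivic Fourier transforms $\widehat{H}(\chi,\TT)$. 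In the split, unramified-at-infinity setting these characters reduce to the dual lattice part (the ``$\Pic^0$'' and degree twists), together with a contribution from $\Pic^0(\CCC)$ coming from the quotient $U(\mathbb A_K)/U(K)U(\mathbb O)$.

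Next I compute the Fourier transforms. At each $p$, the local transform of the height is the toric sum
\[
\sum_{m\in\mathcal X_*(U)} \prod_{\rho\in\Sigma(1)} T_\rho^{\langle \varphi_\rho, m\rangle}\,\mathbf L_{\kappa(p)}^{\ldots},
\]
which by the standard fan computation equals $\prod_\rho(1-\mathbf L_{\kappa(p)}^{-1}T_\rho)^{-1}$ times a polynomial correction $Q_p(\TT)$. This correction converges as a motivic Euler product in a region slightly beyond $|T_\rho|=\LL^{-1}$, and its value at $T_\rho=\LL^{-1}$ is the local factor $(1-\LL_{\kappa(p)}^{-1})^{r}[X_{\kappa(p)}]\LL_{\kappa(p)}^{-\dim X_\Sigma}$. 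Summing over the global degree then turns the $r$ torus-invariant divisor directions, modulo the $n$ character relations, into $r$ factors of motivic zeta functions $Z_{\CCC}(\LL^{-1}T)$. Each of these has a simple pole with residue $[\Pic^0(\CCC)]\LL^{1-g}/(\LL-1)$ times a power of $\LL$, and this accounts for the bracketed factor raised to the power $r$ and for the term $\LL^{(1-g)\dim X_\Sigma}$.

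For the stabilisation itself, I would take the explicit form of the multivariate meromorphic continuation in \cref{theorem:final-residue-type-result}. It says that $\zeta^{\mathrm{mot}}_H(\TT)$ equals $\prod_{i=1}^r(1-\LL T^{e_i})^{-1}$, in coordinates adapted to a basis of the effective cone (the simplicial structure coming from the Cox ring presentation), multiplied by a series $G(\TT)$ converging on a polydisc of radius $\LL^{-1+\eta}$. Extracting the coefficient of $\TT^\delta$ and normalising by $\LL^{-\delta\cdot\omega^{-1}}$ gives a convolution of $G$ with the constant coefficients of the geometric series. As $\delta$ moves away from every wall of $\CEff(X_\Sigma)^\vee$, the truncated sums converge in $\widehat{\MMM_\kb}^{\dim}$ to $G(\LL^{-1},\dots,\LL^{-1})$, which is the claimed Euler product.

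I expect the main obstacle to be controlling the non-trivial characters and the error terms uniformly in $\delta$ in the dimension filtration. One must show that the terms with $\chi\neq 1$ and the tail of $G$ have virtual dimension tending to $-\infty$ in proportion to the distance of $\delta$ from the boundary. I would attempt this through weight or dimension bounds on the motivic Euler products on the enlarged polydisc, using the gap $\eta$.
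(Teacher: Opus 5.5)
The first half of your plan follows the paper: the Poisson formula, the local transforms $Q_\Sigma(\bm\chi_v\TT)/\prod_\alpha(1-\bm\chi_v(\rho_\alpha)T_\alpha)$, the Euler products, and the residues of $Z^{\Kapr}_\CCC$. The step that is supposed to yield stabilisation, however, does not work.

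\textbf{What goes wrong.} You read \cref{theorem:final-residue-type-result} as a multivariable factorisation $\zeta^{\mathrm{mot}}_H(\TT)=\prod_{i=1}^r(1-\LL T^{e_i})^{-1}G(\TT)$, in coordinates on $\Pic(X_\Sigma)^\vee$ adapted to a basis of the effective cone. That theorem is only a one-variable statement along $T^{\omega_{X_\Sigma}^{-1}}$. Its constant involves $a_{\lambda_0}$ and $\alpha^*(X_\Sigma)$, neither of which appears in the stabilisation statement, and anticanonical information alone cannot control the individual classes $[\Hom^\delta_\kb(\CCC,X_\Sigma)_U]$.

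Worse, the factorisation you want does not exist in general. $\CEff(X_\Sigma)$ is spanned by the $r+n$ classes $[\mathfrak D_\alpha]$ in a lattice of rank $r$, and it is usually not simplicial; for the toric del Pezzo surface of degree $6$ it has six extremal rays in rank $4$. Hence $\CEff(X_\Sigma)^\vee$ is not simplicial either. Once you restrict to $\Pic(X_\Sigma)^\vee$, the polar part is the lattice-point series $\mathfrak L$ of a non-simplicial cone, not a product of $r$ geometric series, so \cref{lemma-from-convergent-series-to-stabilisation-of-coeffs} does not apply. The simplicial structure coming from the Cox ring lives in $\PL(\Sigma)^\vee\simeq\ZZ^{\Sigma(1)}$ (the positive orthant), not in $\Pic(X_\Sigma)^\vee$.

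\textbf{The missing idea.} Stay in $\PL(\Sigma)^\vee$ and restrict to $\Pic(X_\Sigma)^\vee$ only at the level of coefficients.
\begin{itemize}
\item Split $\PDiv^\perp$ into the degree part $D(\mathcal X_*(U))$ and the degree-zero part $(\PDiv^0)^\perp$. \Cref{lemma:partial-computation-of-P(T)} evaluates the $(\PDiv^0)^\perp$-integral exactly in large degree, because the Abel--Jacobi fibres are projective bundles. The non-trivial characters therefore contribute nothing beyond a factor $[\PPic^0(\CCC)]^{-n}$. No weight or dimension bounds on them are needed, and your ``main obstacle'' disappears.
\item The $\bm z$-integral then keeps exactly the $\bm d\in\Pic(X_\Sigma)^\vee$. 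Up to the factor $[U]=(\LL-1)^n$, the class $[\Hom^\delta_\kb(\CCC,X_\Sigma)_U]$ is the $\delta$-coefficient of $F(\TT)/\prod_{\alpha\in\Sigma(1)}(1-\LL T_\alpha)$, where
\[
F(\TT)=[\PPic^0(\CCC)]^{-n}\prod_{\alpha}(1-\LL T_\alpha)Z^{\Kapr}_\CCC(T_\alpha)\prod_{v}Q_\Sigma(\TT)
\]
converges for $|T_\alpha|<\LL^{-1/2}$.
\item Now \cref{lemma-from-convergent-series-to-stabilisation-of-coeffs} applies in $|\Sigma(1)|$ variables with $\bm\rho=(1,\dots,1)$. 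Indeed $\sum_\alpha d_\alpha=\delta\cdot\omega_{X_\Sigma}^{-1}$, and going far from the boundary of $\CEff(X_\Sigma)^\vee$ forces $\min_\alpha \delta\cdot\mathfrak D_\alpha\to\infty$.
\end{itemize}

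\textbf{Your constant is also miscounted.} There are $|\Sigma(1)|=r+n$ Kapranov factors, not $r$. The $n$ extra residues $[\PPic^0(\CCC)]\LL^{1-g}/(\LL-1)$, combined with $(\LL-1)^n$ and $[\PPic^0(\CCC)]^{-n}$, are what produce $\LL^{(1-g)\dim X_\Sigma}$. With only $r$ zeta factors that term cannot arise.
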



\subsection*{Relation with earlier works} 

\subsubsection*{The motivic Poisson formula for additive groups}\label{subsubsection:additive_Poisson_formula}
The first occurrence of harmonic analysis in the motivic setting was the motivic Poisson formula of Hrushovski and Kazhdan \cite{hrushovski-kazhdan}. While originally formulated in the language of model theory of valued fields, it was translated in \cite{chambert-loir-loeser2016motivic} into a more geometric context, where it corresponds to a motivic analogue of the formula (\ref{equ:classical_poisson_summation}) in the specific case when $\mathscr{G} = \mathbf{G}^n_a(\mathbf{A}_F)$  are the adelic points of the additive group scheme $\mathbf{G}^n_a$ for $F = k(\CCC)$ the function field of a curve, and $\mathscr{H} = \mathbf{G}^n_a(F)$, diagonally embedded into $\mathscr{G}$. In the motivic setting there is no local compactness, and the classical theory of integration is replaced by a form of motivic integration, with the motivic Poisson formula being an equality between two classes in the so-called \emph{Grothendieck ring of varieties with exponentials}. The first ingredient of the construction is the definition of \textit{motivic Schwartz-Bruhat functions}, which are motivic analogues of locally constant functions with compact support, and which in this setting are elements of appropriate relative Grothendieck rings with exponentials. Integration and Fourier transformation operators are defined as pushforwards between such Grothendieck rings.  Thanks to the self-duality properties of the additive group, the objects appearing on both sides of the Poisson formula are of the same nature, given as the image of a summation operator applied to a motivic Schwartz-Bruhat function and its Fourier transform, respectively. The Poisson formula itself is proved by first reducing to (analogues of) characteristic functions of balls, where it follows from the Riemann-Roch theorem and Serre duality of the curve $\CCC$. 
This Poisson formula was used successively in \cite{chambert-loir-loeser2016motivic}, \cite{bilu2023MAMS} and \cite{faisant-additive-groups} to study moduli spaces of curves on equivariant compactifications of vector groups. 

\subsubsection*{Previous work for toric varieties in the motivic setting} The first one to study Manin's problem for split toric varieties in the motivic setting was Bourqui \cite{bourqui2009produit}, obtaining a convergence result for the motivic height zeta function in the Grothendieck ring of Chow motives. Combining Bourqui's work with the notion of motivic Euler product from \cite{bilu2023MAMS} led to a lift of Bourqui's result to the Grothendieck ring of varieties (and even its generalisation to so-called Hadamard convergence) in \cite{bilu-das-howe2022zeta}, and to the full motivic  Batyrev-Manin-Peyre principle in \cite{faisant2025motivic-distribution} (for rational curves) and \cite{faisant2025universaltorsors} (in arbitrary genus). All of these results were based on the universal torsor method, and the present paper is the first one to produce a functioning Fourier theory for algebraic tori in the motivic setting. 


\subsection*{Main ideas}

As for the motivic Poisson formula in the additive setting, the first step is an appropriate variant of the Grothendieck ring of varieties. 
To explain our choice, let us first describe the objects that we will need to work with. 

Let $F = k(\CCC)$ be the function field of a smooth projective curve $\CCC/k$, and for every closed point $v\in \CCC$, denote by $F_v$ the corresponding completion. For the sake of simplicity, we specialize in this paragraph to the case where our torus $U$ is one-dimensional: $U = \GG_m$, so that the local height functions which we want to build motivic analogues of are functions $f:F_v^{\times}\to \CC$ which are invariant modulo the group of units $\OO_v^{\times}$, and thus may be thought of as functions on $F_v^{\times}/\OO_v^\times\simeq \ZZ$ (where the isomorphism is given by the valuation), which also turn out to be finitely supported. Thus, our families of local functions will naturally arise as elements of a relative Grothendieck ring over the constant group scheme $\underline{\ZZ}_{\CCC}.$ For a more general torus $U$, $\ZZ$ will be replaced by the lattice of cocharacters $\mathcal{X}_*(U).$

As opposed to the additive setting described above, there is no self-duality in the multiplicative setting, and Fourier transforms are \emph{a priori} objects of a different nature: they are motivic functions over the Cartier dual $D(\underline{\ZZ}_{\CCC})$, which is isomorphic to $\GG_{m,\CCC}$. 
We introduce in a \textit{Grothendieck ring of varieties with characters}  where characters are encoded into a separate component. More precisely, given an abstract commutative group $M$ that is finitely generated, a typical generator of the ring $\KCharVar{M}{\CCC}$ is of the form
\[
[X\times_\CCC D(\underline{L}_{\CCC}), \chi\mapsto \chi( g ( m ) )]
\]
where $X$ is a variety over $\CCC$, 
$g : M\to L$ is a morphism of groups, 
$m \in M$ and the quotient by cut-and-paste relations is only on the first factor. This ring comes with a corresponding ``integration over characters'' $\int_{D(M)}$ operation which kills the second component when $ g ( m ) \neq 0$, compatibly with the usual orthogonality relations. It is interesting to note that this setup bears some similarities with the one of \cite{cluckers-motivic-mellin}, though a precise comparison is yet to be made. 

We start by building a local Fourier theory using this framework, before achieving the passage from local to global via the notion of motivic Euler product as in \cite{bilu2023MAMS}. The role of the idèles in this setting is played by the group of divisors $\Div(\CCC)$ on the curve $\CCC$, with the multiplicative group of the field being represented by the subgroup of principal divisors $\PDiv(\CCC).$ Both of these groups are obtained as limits, as $\mathsf{S}$ varies, of the corresponding subgroups $\Div_{\mathsf{S}}(\CCC)$ (resp. $\PDiv_{\mathsf{S}}(\CCC)$) with support restricted to a finite set of places $\mathsf{S}$. In this context, our local Fourier theory applies, giving local integration operators $\int_{D(\Div_{\mathsf{S}}(\CCC)/\PDiv_{\mathsf{S}}(\CCC))}$ satisfying a Poisson formula.  From these local integration operators, we then build a global integration operator $\int_{\PDiv^{\perp}}$ which corresponds to integrating over characters trivial on $\PDiv(\CCC)$ and prove a global Poisson formula. The latter may be applied to the motivic height zeta function, rewriting it analogously to the proofs of Batyrev-Tschinkel and Bourqui. 


\subsection*{Organisation of the paper}

In \cref{section:rings-of-varieties},
we start with recalling the classical definitions of rings of varieties before introducing our new Grothendieck ring of varieties with characters. 

We proceed the same way in \cref{section:motivic-Euler-products} where we first recall the definition of motivic Euler products from \cite{bilu2023MAMS} before extending the notion to motivic functions on constant group schemes and on their Cartier duals. 

We define Fourier transforms of motivic functions on constant group schemes in \cref{section:multiplicative-motivic-harmonic-analysis}, where we obtain Fourier inversion and Poisson formulas. This framework is adapted to the toric setting in  \cref{section:toric-harmonic-analysis}, resulting in a global Poisson formula for split tori.
 

We eventually apply our theory in the two final sections:
in \cref{section:application-motivic-height-zeta-function}
we apply the motivic Poisson formula to the motivic height zeta function 
and the final analysis of the contributions of the Fourier transforms is performed in \cref{section:final-analysis}.

\subsection*{Notations and conventions}

To avoid confusion, in this article
$U$ will always be a split $n$-dimensional torus over the absolute base field $k$,
while we will use a capital $T$ for indeterminates and its bold version
$\mathbf T$ for sets of indeterminates. On the other hand, lowercase $t$ will usually denote a local parameter.  

The $k$-curve $\mathscr C$ is assumed to be smooth, projective and geometrically irreducible. 
Moreover, we assume that it admits a $k$-divisor of degree one (this additional assumption is automatically satisfied, for example, if $k = \mathbf F_q$ or if $k$ is algebraically closed).
Its function field is denoted by $F$ and its set $| \mathscr C |$ of closed points is identified with the set of discrete valuations of $F$. For each $v\in |\CCC|$, we denote by $F_v$ the corresponding completion of $F$, with ring of integers $\OO_v$. The adèle ring $\mathbf{A}_F$ is the restricted product of the $F_v$ with respect to the $\OO_v$, and the idèle ring $\mathbf{A}_F^{\times}$ is the restricted product of the multiplicative groups $F_v^{\times}$ with respect to their subgroups $\OO_v^{\times}$. 

We use bold letters to differentiate schemes and group schemes from respectively sets and abstract groups, with the following exception: if $M$ is an abstract group and $S$ a base scheme,
the constant group scheme above $S$ will be denoted by $\underline{M}_S$.

As an abstract group, the group of divisors
$
\Div ( \CCC ) 
$
on the curve $\CCC$ is filtered by the subgroups 
$
\Div_\mathsf S ( \CCC ) 
$
of divisors with support in finite subsets $\mathsf S$ of the set $|\CCC|$ of closed points of $\CCC$, giving 
\[
\Div ( \CCC ) 
= 
\varinjlim_\mathsf S
\Div_\mathsf S ( \CCC ) 
=
\varinjlim_\mathsf S 
\bigoplus_{x \in \mathsf S} \ZZ_{\kappa ( x )}
=
\bigoplus_{x \in | \CCC |} \ZZ_{\kappa ( x )}. 
\]
This filtration restricts to degree-zero divisors
\[
\Div^0 ( \CCC ) 
= 
\varinjlim_\mathsf S
\Div_\mathsf S^0 ( \CCC )
\]
and to principal divisors 
\[
\PDiv ( \CCC ) 
= 
\varinjlim_\mathsf S
\PDiv_\mathsf S ( \CCC ) .
\]
Let $I$ be a set. Elements of the free abelian monoid 
\[
\mathbf{N}^{(I)} = \{ (n_i)_{i\in I}\in \mathbf{N},\ n_i = 0\ \text{for almost all}\ i\}
\]
are called \emph{generalised partitions}. If $I$ is of the form $I_0\setminus \{0\}$ where $I_0$ is a commutative monoid, and $\pi=(n_i)_{i\in I}\in \mathbf{N}^{(I)}$, we may define 
\[
\sum \pi = \sum_{i\in I}i n_i \in I_0.
\]
If $i\in I_0$ is such that $\sum \pi = i$, we say that $\pi$ is a partition of $i$
and write $\pi\vdash i$.


\subsection*{Acknowledgments} 
MB was partially supported by the ANR projects Cyclades (ANR-23-CE40-0011) and AdAnAr (ANR-24-CE40-6184). 
LF acknowledges partial funding from the European Union’s Horizon 2020 research and innovation programme under the Marie Skłodowska-Curie grant agreement No 101034413,
as well as 
partial support by the KU Leuven IF grant C16/23/010. 

The authors thank A.~Chambert-Loir who first proposed this project to MB, as well as D.~Bourqui, R.~Cluckers, M.~Raibaut and E.~Peyre for useful conversations.


\renewcommand{\theequation}{\thesection.\arabic{equation}}
\counterwithin*{equation}{section}


\section{Grothendieck rings}

\label{section:rings-of-varieties}

This section aims to define a new variant 
of the ring of varieties that will allow us to handle certain families of multiplicative characters. 
We first start with recalling the classical definition of ring of varieties over an arbitrary base scheme. 


\subsection{Rings of varieties}

Let $S$ be a scheme. 
An $S$-variety is a finitely presented scheme morphism $X \to S$.

\begin{definition}
The \textit{Grothendieck group of $S$-varieties }
\[
    \KVar S
\] 
is defined as the 
    abelian group generated by the isomorphism classes of \textit{$S$-varieties}, with relations given by the \textit{cut-and-paste relations}
\[
X - Y - U
\] 
whenever $X$ is an $S$-variety, 
$Y $ is a closed subscheme of $X$ 
and $U$ is its open complement in $X$. 
The class of an $S$-variety $X$ in this ring is denoted by $[X \to S]$,
$[X]_S$ or simply $[X]$
if the structural morphism is clear from the context.
Let $\LL_S$ be the class of the affine line. 

A ring structure on $\KVar S $ is defined by  
\[
[X][Y]=[X\times_S Y]
\]
for which $[ \mathrm{id} : S \to S ]$ is the unitary element. 
\end{definition}

A constructible subset
of an $S$-variety
is a finite union of locally closed subsets. 
By \cite[p. 59]{chambert-loir-nicaise-sebag2018motivic},
any constructible subset $X$ of an $S$-variety admits a class $[X]$ in $\KVar S$.

\begin{lemma}[{\cite[Lemma 1.1.8]{chambert-loir-loeser2016motivic}
	or 
	\cite[Theorem 1]{cluckers-halupczok2022evaluation}}]
 \label{lemma-motivic-functions-are-defined-on-points}
	Let $\varphi $ be a motivic function on $S$,
	that is to say an element of $\KVar{S}$.
	If $s^* \varphi = 0 $ 
	in $\KVar{\kappa ( s )}$
	for all point $s \in S$ then $\varphi = 0$ in $\KVar S$.  
\end{lemma}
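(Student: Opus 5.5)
Since the statement only involves $\KVar S$ for a general scheme $S$, I would argue by noetherian induction on $S$, using that the class $\varphi$ only involves finitely many varieties. First reduce to $S$ noetherian (or at least to a situation where noetherian induction works): $\varphi$ is a finite $\ZZ$-combination $\sum_i a_i [X_i \to S]$ of finitely presented $S$-schemes, so by standard limit arguments it is pulled back from a class over a finite-type $\ZZ$-scheme $S_0$. The hypothesis at points of $S$ does not obviously transfer to $S_0$, though. In the applications $S$ is of finite type over $k$, so I would mainly treat $S$ noetherian.

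The key step is a spreading-out lemma. Let $\eta$ be a generic point of an irreducible component of $S$, and suppose $\eta^*\varphi = 0$ in $\KVar{\kappa(\eta)}$. The relation $\eta^*\varphi=0$ is witnessed by finitely many cut-and-paste relations and isomorphisms between $\kappa(\eta)$-varieties. Since $\kappa(\eta)=\varinjlim \OO(V)$ over open neighbourhoods $V$ of $\eta$ (taken inside an affine open, with reduced structure), these finitely presented data spread out (EGA IV, \S 8) to a dense open $V \ni \eta$. There the same relations hold among $V$-varieties: closed immersions spread to closed immersions, complements to complements, and isomorphisms to isomorphisms after shrinking $V$. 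Hence $\varphi|_V = 0$ in $\KVar V$.

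Since $[X_V]$ and $[X_{V_{\mathrm{red}}}]$ agree in the Grothendieck ring (cut and paste identifies a scheme with its reduction, having the same underlying space stratification), nilpotents cause no trouble. Write $Z = S \setminus V$ with reduced structure. The cut-and-paste relation applied to each $X_i$ gives
\[
\varphi = (\iota_Z)_!\,\varphi|_Z + (\iota_V)_!\,\varphi|_V
\]
in $\KVar S$, where $(\iota)_!$ is the forgetful pushforward. The restriction $\varphi|_Z$ again satisfies the pointwise hypothesis, since points of $Z$ are points of $S$ with the same residue fields. By noetherian induction $\varphi|_Z = 0$, hence $\varphi = 0$.

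The main obstacle is the spreading-out step: making sure that a vanishing in $\KVar{\kappa(\eta)}$, which is a statement about a formal combination in a free abelian group modulo relations, really descends to an open set. I would handle it by writing $\eta^*\varphi=0$ explicitly as an equality of two finite formal sums of generators, each side expanded via finitely many cut-and-paste moves ending in matching lists of isomorphic varieties. I would then spread out each piece: the closed subschemes, their complements and the isomorphisms, using that all the varieties are of finite presentation. This is exactly the argument of \cite[Lemma 1.1.8]{chambert-loir-loeser2016motivic}, which one can cite for the details.
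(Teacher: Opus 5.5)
Your argument (noetherian induction, spreading out the vanishing at the generic point of a component with reduced structure to an open neighbourhood, then concluding via $\varphi = (\iota_Z)_!\varphi|_Z + (\iota_V)_!\varphi|_V$) is the standard proof of the cited Lemma 1.1.8 of Chambert-Loir--Loeser, which the paper invokes without reproducing, and it is correct for noetherian $S$. You rightly leave aside the passage to non-noetherian $S$, since the pointwise hypothesis does not descend naively, but the paper's applications do not need it: they are varieties over $k$, namely the curve $\mathscr C$ and its symmetric products.
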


\begin{definition}
    The localised Grothendieck ring of varieties $\mathscr M_S$ , sometimes called \emph{ring of motivic integration}
 is the localisation of $\KVar S $  at $\LL_S$.
\end{definition}

\begin{definition}

The ring $\mathscr M_S$ admits a decreasing filtration by the virtual dimension: 
for $m\in \ZZ$, let $\mathcal F^m \MMM_S $ be the subgroup of $\MMM_S$ generated by elements of the form
\[
[ X ]\LL_S^{-i} 
\]
where $X$ is an $S$-variety and $i$ an integer such that $\dim_S ( X ) - i \leqslant - m$.
The completion of $\MMM_S$ with respect to this decreasing dimensional filtration is the projective limit
\[
\widehat{\MMM_S}^{\dim } = 
\underset{\longleftarrow}{\lim } \, \MMM_S / \mathcal F^m \MMM_S .
\] 
It comes with a morphism $\MMM_S \to \widehat{\MMM_S}^{\dim }$. 

\end{definition}

\label{paragraph-ring-of-varieties}


\subsection{Biduality for constant commutative group schemes}
A reference for this subsection is \cite[Exp.~VIII]{SGA3-II}.
Let $S$ be a scheme. 
Recall that the Cartier dual of a group scheme $G$ above $S$ 
    is the $S$-group scheme
    \[
    D ( G ) 
    = 
    \Hom_{S-\mathrm{gp}} ( G , \GG_{m,S} )
    \]
    of multiplicative characters of $G$. 
    It is a closed subscheme of $ \Hom_{S-\mathrm{sch}} ( G , \GG_{m,S} )$
    and comes with a canonical pairing of group schemes 
    \[
    \ev : \left \{ 
    \begin{array}{rll}
       G \times_S D ( G )  & \longrightarrow & \GG_m \\
      ( g , \chi ) & \longmapsto & \chi ( g ) .
    \end{array}
    \right.
    \]
A constant commutative group scheme $\Gamma$ above $S$
is an $S$-group scheme of the form 
\[
    \Gamma = \underline M_S = \coprod_M S
\]
where $M$ is an abstract commutative group.
A group scheme $G \to S$ is said to be \emph{diagonalisable}
if it is of the form $G = D ( \underline M_S )$
where $M$ is an abstract commutative group,
and \emph{locally diagonalisable}
if every point of $S$ admits an open neighborhood $U$ such that $G_{|U}$ is diagonalisable.

Let $\Gamma$ be a constant commutative group scheme above $S$.
Then the canonical morphism
    \[
    \Gamma 
    \longrightarrow
    D ( D ( \Gamma ) ) 
    \]
    is an isomorphism \cite[Exp.~VIII, Théorème 1.2]{SGA3-II}. 

\begin{proposition}[{\cite[Exp.~VIII, Prop. 2.1]{SGA3-II}}] 
    Let $G = D ( \underline M_S ) $ be a diagonalisable group scheme over $S$. 
    Then $G$ is faithfully flat and affine over $S$.
    Moreover,
    \begin{itemize}
        \item $M$ is of finite type as an abstract group if and only if $G \to S$ is of finite type ;
        \item $M$ is finite if and only if $G \to S$ is finite, and $M$ is torsion if and only if $G \to S$ is integral ;
        \item $M$ is of finite type, with torsion of order coprime to the residual characteristics of $S$, if and only if $G \to S$ is smooth. 
    \end{itemize}
\end{proposition}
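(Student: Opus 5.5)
The plan is to reduce to the affine case, where everything is explicit. By definition $D(\underline M_S)=\Hom_{S\text{-gp}}(\underline M_S,\GG_{m,S})$. A morphism of group schemes from the constant group scheme $\coprod_M S$ to $\GG_{m,S}$ is the same as a group homomorphism $M\to \GG_m(T)$, functorially in $T$. This identifies $D(\underline M_S)$ with $\Spec_S \OO_S[M]$, the relative spectrum of the group algebra. In particular $G$ is affine over $S$, and all the listed properties are local on $S$ and stable under base change. So I would first reduce to $S=\Spec R$ and, for the converse directions, to the case where $R$ is a field. There I can use faithfully flat descent and the fact that $R\to R[M]$ is injective.

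Flatness is immediate: $R[M]$ is a free $R$-module with basis $M$. Faithful flatness follows because $R\to R[M]$ has a retraction given by the augmentation $m\mapsto 1$. Hence it is injective, and every prime of $R$ lies under the kernel of the corresponding augmentation of $\kappa(\mathfrak p)[M]$.

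For the first item: if $M$ is generated by $m_1,\dots,m_r$, then $R[M]$ is generated as an algebra by the $m_i^{\pm1}$. Conversely, if $R[M]$ is finitely generated, there are finitely many elements of $M$ occurring in the generators, and they generate a subgroup $M'$ with $R[M']=R[M]$. Comparing free bases, this forces $M'=M$.

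For the second item: $R[M]$ is finite over $R$ if and only if the free basis $M$ is finite. It is integral if and only if each basis element $m$ satisfies a monic equation. If $m$ has order $n$, it satisfies $x^n-1$. Conversely, if $m$ has infinite order, then $R[\langle m\rangle]\cong R[x^{\pm1}]$ sits inside $R[M]$ as a direct summand, and $x$ is not integral over $R$. To see this, reduce to a residue field $\kappa$, where $x$ is transcendental over $\kappa$.

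The third item is the main obstacle. Granting finite type, write $M\cong \ZZ^r\oplus\bigoplus_i \ZZ/n_i$, so that $R[M]\cong R[x_1^{\pm1},\dots,x_r^{\pm1}]\otimes\bigotimes_i R[y_i]/(y_i^{n_i}-1)$. The torus factor is smooth. The factor $R[y]/(y^n-1)$ is étale exactly when the derivative $ny^{n-1}$ is a unit, which happens if and only if $n$ is invertible in $R$. Smoothness is fibrewise for flat finitely presented morphisms, so this means $n$ is nonzero in every residue field, which is precisely the coprimality condition. For the converse, suppose $p=\mathrm{char}\,\kappa(s)$ divides the order of the torsion. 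Then $M$ contains $\ZZ/p$, and over $\kappa(s)$ the factor $\kappa[y]/(y^p-1)=\kappa[y]/(y-1)^p$ is non-reduced. It is a direct factor in the tensor decomposition, so the fibre is non-reduced and therefore not smooth. The care needed here is in checking that the direct factor really makes the whole fibre non-reduced, rather than only the factor itself.
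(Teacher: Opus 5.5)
The paper does not prove this statement; it only cites SGA3, Exp.~VIII, Prop.~2.1. Your argument is essentially the standard one behind that reference: identify $G$ with $\Spec_S \OO_S[M]$, use that $\OO_S[M]$ is free on $M$, and for smoothness reduce to the cyclic factors $\GG_m$ and $\mu_n$. It is correct apart from one step that needs a small repair.

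\textbf{The step to repair.} In the converse of the third item you say that $M$ contains $\ZZ/p$ and that $\kappa[y]/(y^p-1)$ is a direct factor in the tensor decomposition.
\begin{itemize}
\item A subgroup of order $p$ need not be a direct summand of $M$; take $M=\ZZ/p^2$. So this tensor factor need not exist.
\item You do not need it. For any subgroup $M'\subset M$, the map $\kappa[M']\to\kappa[M]$ is injective, because it sends a basis onto part of a basis. You already use this in your integrality argument.
\item Write $e_m$ for the basis vector attached to an element $m$ of order $p=\mathrm{char}\,\kappa(s)$. Then $e_m-1$ is non-zero in $\kappa(s)[M]$, while $(e_m-1)^p=e_{pm}-1=0$ by Frobenius.
\item Hence the fibre is non-reduced, so $G$ is not smooth. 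This also disposes of your worry about ``the whole fibre''.
\item Alternatively, use an invariant factor $\ZZ/n_i$ with $p\mid n_i$. Let $p^a$ be the exact power of $p$ dividing $n_i$; then $y^{n_i}-1=(y^{n_i/p^a}-1)^{p^a}$ in characteristic $p$, and $y^{n_i/p^a}-1$ is a non-zero nilpotent of $\kappa(s)[y]/(y^{n_i}-1)$.
\end{itemize}

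\textbf{Two minor remarks.}
\begin{itemize}
\item Every ``only if'' direction needs $S\neq\emptyset$. If $S$ is empty, $G$ is finite and smooth for any $M$. You use $R\neq 0$ implicitly when you compare free bases and when you pass to a residue field. SGA3 states this hypothesis; the paper's transcription drops it.
\item For those directions, plain base change to $\Spec\kappa(s)$ suffices; no faithfully flat descent is needed.
\end{itemize}
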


\begin{proposition}\label{prop:Cartier_dual_of_product} 
Let $M, L,L'$ be abstract commutative groups such that there exist group morphisms $M\to L$ and $M\to L'$. We denote by $L\times_ML'$ the corresponding pushout in the category of abelian groups, i.e. the universal object fitting into the diagram
\[
\begin{tikzcd}
 M \arrow[r]\arrow[d] & L\arrow[d]\\
 L' \arrow[r] & L\times_ML'.
\end{tikzcd}
\]
Given a scheme $S$, the canonical morphisms $D(\underline{L\times_ML'}_S)\to D(\underline{L}_S) $ and $D(\underline{L\times_ML'}_{S})\to D(\underline{L'}_S)$ induce an isomorphism
\[ D(\underline{L\times_ML'}_S) \to D(\underline{L}_S)\times_{D(\underline{M}_S)}D(\underline{L'}_S)\]
of $D(\underline{M}_S)$-schemes, explicitly given by $\chi\mapsto (\chi_{|L},\chi_{|L'})$. Moreover, for all $l\in L, l'\in L'$, this isomorphism makes the diagram
\[
\begin{tikzcd}[sep = large]
D(\underline{L\times_ML'}_S) \ar[d,"{\ev((l,l'),\cdot)}"] \ar[r]& D(\underline{L}_S)\times_{D(\underline{M}_S)}D(\underline{L'}_S) \ar[d,"{(\ev(l,\cdot),\ev(l',\cdot))}" ] \\
\GG_m & \GG^2_m \ar[l, "\mathrm{group\ law}"]
\end{tikzcd}
\]
commute.
\end{proposition}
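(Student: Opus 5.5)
The plan is to argue at the level of functors of points. Both sides are representable, and a morphism of $S$-schemes is an isomorphism as soon as it induces bijections on $T$-points for every $S$-scheme $T$ (Yoneda). So it suffices to construct, functorially in $T$, a bijection
\[
\Hom_{T\text{-gp}}(\underline{L\times_ML'}_T,\GG_{m,T}) \longrightarrow \Hom_{T\text{-gp}}(\underline{L}_T,\GG_{m,T})\times_{\Hom_{T\text{-gp}}(\underline{M}_T,\GG_{m,T})}\Hom_{T\text{-gp}}(\underline{L'}_T,\GG_{m,T}),
\]
where the right-hand side is the set of $T$-points of the fibre product $D(\underline{L}_S)\times_{D(\underline{M}_S)}D(\underline{L'}_S)$.

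The key step is the adjunction for constant group schemes: $\Hom_{T\text{-gp}}(\underline{A}_T,G)=\Hom_{\mathrm{gp}}(A,G(T))$ for any abstract commutative group $A$ and any commutative $T$-group scheme $G$. This holds because a morphism of schemes $\coprod_A T\to G$ over $T$ is exactly a family of sections $(g_a)_{a\in A}$ of $G(T)$, and the homomorphism condition says precisely that $a\mapsto g_a$ is a group homomorphism. It is also functorial in $A$, since $A\mapsto \underline{A}_T$ is a functor. Taking $G=\GG_{m,T}$, this gives $D(\underline{A}_S)(T)=\Hom_{\mathrm{gp}}(A,\mathcal{O}(T)^\times)$ naturally in $A$ and $T$. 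The contravariant functor $\Hom_{\mathrm{gp}}(-,\mathcal{O}(T)^\times)$ sends colimits of abelian groups to limits of sets, so the pushout $L\times_ML'$ is sent to the fibre product $\Hom(L,\mathcal{O}(T)^\times)\times_{\Hom(M,\mathcal{O}(T)^\times)}\Hom(L',\mathcal{O}(T)^\times)$. By the universal property of the pushout, the map realising this is restriction along the two canonical maps $L\to L\times_ML'$ and $L'\to L\times_ML'$, that is $\chi\mapsto(\chi_{|L},\chi_{|L'})$. This identification is compatible with the structure maps to $D(\underline{M}_S)$, since both restrictions agree on $M$. Yoneda then yields the isomorphism of $D(\underline{M}_S)$-schemes, given by the stated formula.

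For the commutative diagram I will again check on $T$-points. Here the notation $(l,l')$ means the element of $L\times_ML'$ given by the image of $l$ plus the image of $l'$, i.e. the class of $(l,l')\in L\oplus L'$. For $\chi\in D(\underline{L\times_ML'}_S)(T)$, going down the left side gives $\chi((l,l'))=\chi(l)\chi(l')$, because $\chi$ is a homomorphism to $\mathcal{O}(T)^\times$. Going the other way gives $(\chi_{|L}(l),\chi_{|L'}(l'))\in\GG_m^2(T)$, and then multiplication yields the same element. Here I use that $\ev(l,\cdot)$ on $D(\underline{L}_S)$ corresponds on $T$-points to $\psi\mapsto\psi(l)$.

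The only point requiring care is the adjunction for constant group schemes when $T$ is not connected. There a $T$-morphism $\coprod_A T\to G$ is still just the family of its restrictions to each copy of $T$, so no connectedness is needed. Everything else is formal, so I expect no real obstacle.
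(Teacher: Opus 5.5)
Your proof is correct. The identification $D(\underline{A}_S)(T)=\Hom_{\mathrm{gp}}(A,\mathcal{O}(T)^\times)$, natural in $A$ and $T$, holds for arbitrary, possibly disconnected, $T$ for exactly the reason you give: a $T$-morphism out of $\coprod_A T$ is a family of sections, and the homomorphism condition is checked componentwise on $\coprod_{A\times A}T$. The contravariant functor $\Hom_{\mathrm{gp}}(-,\mathcal{O}(T)^\times)$ then turns the pushout into the fibre product via restriction. The square commutes because $\chi$ is multiplicative on the class of $(l,l')$.

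The paper does not actually prove this proposition. It states it as a standard fact, citing SGA~3, Exp.~VIII as the reference for the whole subsection, so your Yoneda argument fills in what is left implicit there.

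The same statement can also be seen on coordinate rings. $D(\underline{A}_S)$ is the relative spectrum of the quasi-coherent algebra $\mathcal{O}_S[A]$. The functor $A\mapsto \mathcal{O}_S[A]$ is left adjoint to the unit-group functor on commutative $\mathcal{O}_S$-algebras, so it sends the pushout to $\mathcal{O}_S[L]\otimes_{\mathcal{O}_S[M]}\mathcal{O}_S[L']$, whose spectrum is the fibre product. That version exhibits the map concretely as an isomorphism of affine $S$-schemes. Your functor-of-points version instead gives the explicit formula $\chi\mapsto(\chi_{|L},\chi_{|L'})$ and the compatibility with $\ev$ with no extra bookkeeping.
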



\subsection{Ring of varieties with multiplicative characters}

Let $S$ be a scheme.
\begin{remark}
By \cite[Exp.~VIII, Corollaire 1.3]{SGA3-II},
any $S$-group morphism 
\[
D ( \underline L_S ) \to \mathbf G_{m,S}
\]
is defined by a unique section of $\underline L_S$, that is to say a locally constant map $S \to L$. 
In particular, if $S$ is connected, $D ( \underline L_S ) \to \mathbf G_{m,S}$ is uniquely determined by an element of $L$. 

More generally, if $M$ and $N$ are abstract commutative groups,
the natural morphism 
\[
\Hom_{\text{gp}} ( M , N ) 
\longrightarrow 
\Hom_{S-\text{gp}} ( D ( \underline N_S ) , D ( \underline M_S ) )  
\]
is an isomorphism. 
If moreover $M$ is of finite type, 
the morphism of $S$-functors
\[
\underline{\Hom_\text{gp} ( M , N )}_S
\longrightarrow
\HHom_{S-\text{gp}} ( \underline M_S  ,  \underline N_S ) 
\]
is an isomorphism,
from which one deduces that  
$\underline{\Hom_\text{gp} ( M , N )}_S$,
represents the functor 
\[
\HHom_{{S-\text{gp}}} ( D ( \underline N_S ) , D ( \underline M_S ) ).
\]
Again, if moreover $S$ is connected, 
$\Hom_{S-\text{gp}} ( D ( \underline N_S ) , D ( \underline M_S ) ) $ is isomorphic to 
\[
\Hom_\text{gp} ( M , N ). 
\]
\end{remark}

\begin{definition}

\label{def:new-char-var} Let $M$ be an abstract commutative group. 
The Grothendieck group
of $S$-varieties with   characters on $M$
is the $\ZZ$-module 
\[
\KCharVar{M}{S}
\]
generated by isomorphism classes 
\[
[X \times_S D ( \underline L_S ), h ]_{S \times D ( \underline M_S )}
\]
of diagrams
    \begin{equation}\label{eq:generator-diagram}
    ( X \times_S D ( \underline L_S ) ,  h :  D ( \underline L_X ) \to \GG_m ) = 
\begin{tikzcd}
    X \times_S D ( \underline L_S ) \arrow[d,"{(f, g )}"] \arrow[r,equal] & \rar D ( \underline L_X ) \rar["h"] & \GG_m \\
    S \times_S D ( \underline M_S ) & & 
\end{tikzcd}
\end{equation}
where
\begin{itemize}
    \item $f :X \to S$ is an $S$-variety,
    \item $g$ corresponds by Cartier duality to a 
    morphism $\underline M_S \to \underline L_S$ of constant $S$-groups,
    \item $h \in \Hom_{\mathrm{gp}} ( D ( \underline L_X ) , \mathbf G_{m,X} ) $ is given by the evaluation at an element belonging to the image of $\underline M_S \to \underline L_S$.
\end{itemize}
\[
\left [
\begin{tikzcd}
     X \times_S D ( \underline L_S ) \arrow[d,"{(f,g)}"] \arrow[r,equal] & D ( \underline L_X ) \arrow[r,"h"] & \GG_m \\
    S \times_S D ( \underline M_S ) & &
\end{tikzcd}
\right ] 
\]
\[
=
\left [
\begin{tikzcd}
    U \times_S D ( \underline L_S ) \arrow[d,"{(f_{|U},g)}"] \arrow[r,equal] & D ( \underline L_U ) \arrow[r,"h_{|U}"] & \GG_m \\
    S \times_S D ( \underline M_S ) & &
\end{tikzcd}
\right ] 
+
\left [
\begin{tikzcd}
    Z \times_S D ( \underline L_S ) \arrow[d,"{(f_{|Z},g )}"] \arrow[r,equal] & D ( \underline L_Z ) \arrow[r,"h_{|Z}"] & \GG_m \\
    S \times_S D ( \underline M_S ) & & 
\end{tikzcd}
\right ]
\]
for every Zariski-closed subscheme $Z$ of $X$ with open complement $U$. 
A product law is defined relatively to $S \times_S D ( \underline M_S )$: 
\[
\left [\begin{tikzcd}
    X_1 \times_S D ( \underline{L_1}_S ) \dar  \rar["h_1"] & \GG_m  \\
    S \times_S D ( \underline M_S )   & 
\end{tikzcd}
\right ]
\times
\left [
\begin{tikzcd}
      X_2 \times_S D ( \underline{L_2}_S ) \dar \rar["h_2"] & \GG_m  \\
    S \times_S D ( \underline M_S )   & 
\end{tikzcd}
\right ]
\]
\[
= 
\left [
\begin{tikzcd}[column sep=huge]
    ( X_1 \times_S X_2 ) \times_S ( D ( \underline{L_1}_S ) \times_{D ( \underline{M}_S )} D ( \underline{L_2}_S ) ) \dar \arrow[r, "{( h_1 , h_2 )}"] & \GG_m^2 \rar["\text{group law}"] & \GG_m \\
    S \times_S D ( \underline M_S ) & 
\end{tikzcd}
\right ] 
\]
where the last diagram is a well-defined element of $\KCharVar{M}{S}$ by \cref{prop:Cartier_dual_of_product}. In particular, this endows $ \KCharVar{M}{S}$ with a structure of $\KVar S$-module. 
\end{definition}

\begin{remark} When $M$ is the trivial group, then all of the diagrams of the form (\ref{eq:generator-diagram}) for fixed $X$ over $S$ become isomorphic, so that $\KCharVar{M}{S} =\KVar{S}$. 
\end{remark}

\begin{notation}[Pullback and pushforward on the first factor] 

Let $p:S\to S'$ be a morphism of schemes. Then, as usual with Grothendieck rings, $p$ induces a ring morphism:
\[p^*:  \KCharVar{M}{S'}\to  \KCharVar{M}{S} \]
given by
\[
\left [
\begin{tikzcd}
     X \times_{S'} D ( \underline L_{S'} ) \arrow[d,"{(f,g)}"] \arrow[r,equal] & D ( \underline L_X ) \arrow[r,"h"] & \GG_m \\
    S' \times_{S'} D ( \underline M_{S'} ) & &
\end{tikzcd}
\right ] \mapsto \] \[ \left [
\begin{tikzcd}
     (X\times_{S'}S)\times_S D ( \underline L_S ) \arrow[d,"{(p^{*}f,p^{*}g)}"] \arrow[r,equal] & D ( \underline L_{X\times_{S'}S}) \arrow[r,"p^*h"] & \GG_m \\
    S \times_S D ( \underline M_S ) & &
\end{tikzcd}
\right ]
\]
and, if $p$ is of finite presentation, a group morphism $p_!$:
\[p_!:  \KCharVar{M}{S}\to \KCharVar{M}{S'} \]
given by \[\left [
\begin{tikzcd}
     X \times_S D ( \underline L_S ) =  X \times_{S}(S\times_{S'} D ( \underline L_{S'} ))  \arrow[d,"{(f,g)}"] \arrow[r,equal] & D ( \underline L_X ) \arrow[r,"h"] & \GG_m \\
    S \times_S D ( \underline M_S ) & &
\end{tikzcd}
\right ] \mapsto\] \[\left [
\begin{tikzcd}
     X \times_{S'} D ( \underline L_{S'} ) \arrow[d,"{(p\circ f,g_{S'})}"] \arrow[r,equal] & D ( \underline L_{X_{S'}} ) \arrow[r,"h_{S'}"] & \GG_m \\
    S' \times_{S'} D ( \underline M_{S'} ) & &
\end{tikzcd}
\right ]\]
\end{notation}
 \begin{remark}
 More generally, if $p:S\to S'$
 is a morphism of schemes and $\mathfrak{a}\in \KCharVar{M}{S}$ which is supported over a subscheme of $S$ of finite presentation over $S'$, we may make sense of $p_!\mathfrak{a}\in  \KCharVar{M}{S'}$. We will use this repeatedly in the special case where $p$ is the projection $\underline{M}_S\to S$ and $\mathfrak{a}$ has finite support. \end{remark}

 \begin{notation}[Evaluation map]\label{notation:evaluation_map}
For any $m \in M$,
we write
\[
\ev ( m , \cdot ) 
:=
\left [
S \times_S D ( \underline M_S ) , \chi \mapsto \chi ( m ) 
\right ]_{S \times_S D ( \underline M_S ) }\in \KCharVar{M}{S} .
\]

Note that by definition of the product in $\KCharVar{M}{S}$, we have $\ev(m,\cdot)\ev(n,\cdot) = \ev(m+n,\cdot)$ for all $m,n\in M$. 

We may also define the evaluation pairing
\[\ev:= [\underline{M}_S\times_SD(\underline{M}_S), (m,\chi)\mapsto\chi(m)]\in \mathrm{K_0} \mathbf{Char}_M \mathbf{Var}_{\underline{M}_S}. \]

\end{notation}

Orthogonality relations are encoded by the following morphism of integration with respect to the character variables.

\begin{definition}[Integration with respect to  the second factor]
    \label{def:loc-integration-characters}
    There exists a unique $\KVar S$-linear group morphism
    \[
    \int_{ D ( \underline M_S ) } : 
    \KCharVar{M}{S}
    \to 
    \KVar{S}
    \]
    sending a class $  [ X \times_S D ( \underline L_S ) , \chi \mapsto \chi ( l ) ]$ to $[X]_S$ if $l = 0$ and zero otherwise. 
\end{definition}
For $\psi\in \KCharVar{M}{S}$ we write the image via this morphism either as $\int_{ D ( \underline M_S ) } \psi$ or
\[
\int_{ D ( \underline M_S ) }\psi(\chi)\mathrm{d} \chi
\]
to highlight the interpretation as an integral over characters. 
\begin{remark}
In particular, 
we get the formal identity 
\begin{equation}\label{equation:orthogonality}
\int_{ D ( \underline M_S )}
\ev ( m , \chi  )
\mathrm d 
\mathbf \chi 
= 
\begin{cases}
    1 & \text{ if } m  = 0 \\
    0 & \text{ otherwise}
\end{cases}
\end{equation}
for all $m \in \underline M_S$.

\end{remark}

Let us now consider the following situation: let $N\subset M$ be commutative abstract groups. We identify $D ( M / N )$ with the kernel of the restriction morphism
$\mathrm{res}_{|N} : D ( M ) \to D ( N ) $
so that we have a short exact sequence
\[
0\to D( M / N ) \xrightarrow{i} D(M) \xrightarrow{j} D(N) \to 0.
\]
and a pushforward morphism 
\[
i_{!}: \KCharVar{ M / N }{S}\to \KCharVar{M}{S}.
\]

\begin{definition}\label{def:restricted-local-integration}
We define an integration morphism $\int_{D(\underline{M/N}_{S})}: \KCharVar{M}{S} \to \KVar{S}$ by 
\[
 \int_{ D ( \underline{ M / N }_S ) } 
 =
  \int_{ D ( \underline{M}_S ) } \mathbf 1_{D ( \underline{ M / N }_S )}
\]
where 
\[
\mathbf 1_{D ( \underline{ M / N }_S )} = [ D ( \underline{ M / N }_S ) \hookrightarrow D ( \underline{M}_S  ) , 1 ]_{D(\underline{M}_S)}.
\]
\end{definition}
In other words, given a generator $[X\times_SD(\underline{L}_S),h]$
\[
 \int_{ D ( \underline{ M / N }_S ) } [X\times_SD(\underline{L}_S),h] 
 = \int_{ D ( \underline{M}_S )} [X\times_S(D(\underline{L}_S)\times_{D ( \underline{M}_S )} D ( \underline{ M / N }_S)),h]. 
\]

\begin{remark}
It is straightforward to check that the operator $ \int_{ D ( \underline{ M / N }_S ) } $ is compatible with the operator 
\[
  \int_{ D ( \underline{ M / N }_S ) }: \KCharVar{M/N}{S} \to \KVar{S}
\] defined in \cref{def:loc-integration-characters}, in the sense that one recovers the latter by composing the former with the pushforward map $i_!$ on the second factor. 
\end{remark}

\begin{remark}
In particular, we can check that we get the formal identity 
\begin{equation}\label{equation:orthogonality_with_quotient}
\int_{ D ( \underline{M/N}_S )}
\ev ( m , \chi  )
\mathrm d 
\mathbf \chi 
= 
\begin{cases}
    1 & \text{ if } m \in N \\
    0 & \text{ otherwise}
\end{cases}
\end{equation}
for all $m \in \underline M_S$.
\end{remark}

\begin{definition}\label{def:partial_integration} Assume that $M$ splits as $M = M'\oplus M''$, so that $D(M) = D(M') \times D(M'').$ We define a \textit{partial integration operator}
\[
\int_{D(M')}:\KCharVar{M}{S} \to \KCharVar{M''}{S} 
\]    
in the following way. Given a generator
\[
[X\times_SD(\underline{L}_S), \chi \mapsto \chi(l)]_{S\times_SD(\underline{M'\oplus M''}_S)},
\]
with $g: M'\oplus M''\to L$ and $l = g(m',m'')$ in the image of $g$, we define 
\[\int_{D(M')}[X\times_SD(\underline{L}_S), \chi \mapsto \chi(l)] =
\begin{cases} 0 & \text{if}\ m'\neq 0 \\
[X\times_SD(\underline{L}_S),\chi \mapsto \chi(l)]_{S\times_S D(\underline{M''}_S)} & \text{otherwise,}
\end{cases}
\]
where in the latter case the morphism $D(\underline{L}_S)\to D(\underline{M''}_S)$ comes from $g_{|M''}:M''\to L.$
If moreover we are given an subgroup $N\subset M'$, we may more generally define
\[
\int_{D(M'/N)}:\KCharVar{M}{S} \to \KCharVar{M''}{S} 
\]
given by
\[\int_{D(M'/N)}[X\times_SD(\underline{L}_S), \chi \mapsto \chi(l)] =
\begin{cases} 0 & \text{if}\ m'\not\in N \\
[X\times_SD(\underline{L}_S),\chi \mapsto \chi(l)]_{S\times_S D(\underline{M''}_S)} & \text{otherwise.}
\end{cases}
\]
\end{definition}
\begin{remark}\label{remark:composition_of_partial_integration} Let $M = M'\oplus M''$ with $N$ a subgroup of $M'$ as in \cref{def:partial_integration}. Then it is straightforward to check that the operator
\[
\int_{D(M/N)}: \KCharVar{M}{S}\to \KVar{S}
\]
is equal to the composition $\int_{D(M'')}\circ \int_{D(M'/N)}.$
\end{remark}

\begin{definition} 
We define the localisation 
 \[
\CharM{M}{S} 
=
   \KCharVar{M}{S}
  \left [
  \mathbf L_S^{-1} 
  \right ] . 
 \]
\end{definition}


\subsection{A local integration operator} \label{sect:local_integration_operator} Let $(M_\mathsf{S})_{\mathsf{S}}$ be a directed system of finite type abstract commutative groups, i.e., for every $\mathsf{S}\subset \mathsf{S}'$ there is a group morphism 
\[
i_{\mathsf{S},\mathsf{S'}}: M_{\mathsf{S}} \to M_{\mathsf{S}'},
\]
which we additionally assume to be injective. We also fix directed systems $(M'_\mathsf{S})_{\mathsf{S}}$ and $(M''_\mathsf{S})_{\mathsf{S}}$, assuming that for every $\mathsf{S}$
\[
M_{\mathsf{S}} = M'_{\mathsf{S}} \oplus M''_{\mathsf{S}}
\]
and that $M''_{\mathsf{S}} = M''$ is constant in $\mathsf{S}$. Finally, we fix a family $(N_{\mathsf{S}})_{\mathsf{S}}$ of groups such that for every $\mathsf{S}$, $N_\mathsf{S}$ is a subgroup of $M'_\mathsf{S}$, and such that for every $\mathsf{S}\subset \mathsf{S}'$, $i_{\mathsf{S},\mathsf{S'}}(N_\mathsf{S})\subset N_{\mathsf{S'}}$, so that the restrictions of the $i_{\mathsf{S},\mathsf{S'}}$ make the family $(N_\mathsf{S})_{\mathsf{S}}$ into a directed system. We define $M = \varinjlim_{\mathsf{S}} M_{\mathsf{S}}$ and $M' = \varinjlim_{\mathsf{S}} M'_{\mathsf{S}}$.

We wish to define an operator
\[
\int_{D(M'/N)}: \KCharVar{M}{X} \to \KCharVar{M''}{X}.
\]
For this, let 
$[Y\times_X D(\underline{L}_{X}), \chi\mapsto \chi(l)]$ be a generator, with a group morphism $g:M\to L$ and $l\in L$ in the image of $g$, i.e. $l = g(m)$ where $m = m'+ m''\in M = M'\oplus M''.$ We pick $\mathsf{S}$ such that $m\in M_\mathsf{S}$ and define

\[
\int_{D(M'/N)} [Y\times_X D(\underline{L}_{X}), \chi\mapsto \chi(l)]:= \int_{D(M'_\mathsf{S}/N_\mathsf{S})}[Y\times_X D(\underline{L}_{X}), \chi\mapsto \chi(l)] \in \KCharVar{M''}{X}
\]
in the sense of \cref{def:partial_integration}. We claim that this definition does not depend on the choice of such an $\mathsf{S}$. Take $\mathsf{S}\subset \mathsf{S'}$. We have
\[
\int_{D(M'_\mathsf{S}/N_\mathsf{S})}[Y\times_X D(\underline{L}_{X}), \chi\mapsto \chi(l)] = 
\begin{cases}
    0 & \text{ if } m' \not\in N_{\mathsf{S}} \\
    [Y\times_X D(\underline{L}_{X}), \chi\mapsto \chi(l)]_{D(M'')} & \text{ otherwise}.
\end{cases}
\]
in $\KCharVar{M''}{X}$. Now, viewing $m'$ as an element of $M'_{\mathsf{S'}}$, we have  $m'\in N_{\mathsf{S}'}$ if and only if $m'\in N_{\mathsf{S}'}\cap M'_{\mathsf{S}} = N_{\mathsf{S}}$. This shows that 
\[\int_{D(M'_\mathsf{S}/N_\mathsf{S})} [Y\times_X D(\underline{L}_{X}), \chi\mapsto \chi(l)] = \int_{D(M'_{\mathsf{S}'}/N_{\mathsf{S}'})} [Y\times_X D(\underline{L}_{X}), \chi\mapsto \chi(l)], 
\]
proving the claim. 

\subsection{Dimensional filtration and convergence of series}

\begin{definition}\label{def:dim}  
For any integer $i$, we define $\Fil^i \mathscr M_S$ as the subgroup of $\mathscr M_S$ generated by classes of the form $[X]_S \mathbf{L}_{S}^{-m}$ where $X$ is a variety over $S$ and $m$ is an integer such that $\dim_{S}X -m \leq -i$. This defines a decreasing filtration $(\Fil^i \mathscr M_S)_{i\in \ZZ}$ on $\mathscr M_S$, endowing $\mathscr M_S$ with the structure of a filtered ring. 

For any class $\mathfrak{a}\in \mathscr M_S$, we define
\[\dim_{S}(\mathfrak{a}) = \inf\{i\in \ZZ, \mathfrak{a}\in \Fil^i \mathscr M_S\}\]
We define the completed Grothendieck ring
\[
\widehat{\mathscr M_S}
= 
\lim_{\longleftarrow} \mathscr M_S/ \Fil^{i}\mathscr M_S.
\]
The $\dim_S$ function extends naturally to $\widehat{\mathscr M_S}$. 
\end{definition}

\begin{definition} Let $F(T) = \sum_{i\geq 0}A_{i} T^i\in \widehat{\mathscr M_S}[[T]]$ be a power series with coefficients in $\widehat{\mathscr M_S}$. The radius of convergence of $F$ is defined to be
$$\sigma_F = \limsup_{i\geq 1}\frac{\dim_S(A_i)}{i}.$$
We say that $F$ converges for $|T|< \LL^{-r}$ if $r\geq \sigma_F$. 
\end{definition}
If $F$ converges for $|T|< \LL^{-r}$, then for every element $\mathfrak{a}\in \widehat{\mathscr M_S}$ such that $\dim_S(\mathfrak{a}) < -r$, the value $F(\mathfrak{a})$ exists as an element of $\widehat{\mathscr M_S}$. 

 We now turn to convergence of multivariate series.  The place ourselves in the following general setting: let $N$ be a finitely generated free $\ZZ$-module, and consider
 \[\langle \cdot , \cdot \rangle : N^{\vee}\times N \to \mathbf{Z} 
 \]
 the natural pairing with the dual $N^\vee = \Hom(N,\mathbf{Z}).$ For a rational polyhedral cone $\Lambda\subset N_{\mathbf{R}}$, the intersection $\Lambda\cap N$ is a finitely generated monoid to which we may associate the corresponding completed monoid algebra $\widehat{\MMM_S}[[\Lambda\cap N]]$ with coefficients in $\widehat{\MMM_S}$. The elements of this algebra are viewed as power series with coefficients in $\widehat{\MMM_S}$ in a set of indeterminates $\mathbf{T} = (T_i)_{i\in I}$ indexed by a basis of $\Lambda\cap N$. The relative interior of a cone $\Lambda$ is denoted by $\overset{\circ}{\Lambda}.$
\begin{definition} Let $\Lambda\subset N_{\mathbf{R}}$ be a cone. Let $F(\mathbf{T}) = \sum_{\mathbf{m}\in \Lambda\cap N}A_{\mathbf{m}} \mathbf{T}^{\mathbf{m}} \in \widehat{\mathscr M_S}[[\Lambda\cap N]]$ be a power series with coefficients in $\widehat{\mathscr M_S}$. Let $\mathbf{\lambda}$ be an element of  $N^{\vee}\cap \overset{\circ}{\Lambda^{\vee}}$. The radius of convergence of $F$ in the direction $\lambda$ is defined to be
\[
\sigma_{F,\lambda} = \limsup_{\mathbf{m}\in \Lambda}\frac{\dim_S(A_\mathbf{m})}{\langle  \mathbf{\lambda}, \mathbf{m}\rangle}.
\]
We say that $F$ converges for $|\mathbf{T}|< \LL^{-s\lambda}$ (or, equivalently, for $|T_i| < \LL^{-s\lambda_i}$ for all $i\in I$) if $s\geq \sigma_{F,\lambda}$. 

If $F$ converges for $|\mathbf{T}|< \LL^{-s\lambda}$, then for every tuple of elements $\mathfrak{a} = (\mathfrak{a}_i)_{i\in I}\in \widehat{\MMM_{S}}^I$ such that $\dim_S(\mathfrak{a})< -s\lambda_i$, the value $F(\mathfrak{a})$ exists as an element of $\widehat{\MMM_S}.$
\end{definition}

\subsection{Motivic stabilisation}
The following is a motivic variant of \cite[Lemme 12]{bourqui2011quadriquesintrinseques}.
\begin{lemma}
\label{lemma-from-convergent-series-to-stabilisation-of-coeffs}
Let 
$( \mathfrak a_\dd )_{\dd \in \NN^r}$
be a family of classes in $\MMM_S$
and 
    \[
F ( \TT )
= 
\sum_{\dd \in \NN^r}
\mathfrak a_\dd 
\TT^\dd .
    \]
Assume that $F ( \TT )$ converges absolutely for $| T_i | < \LL^{-\rho_i + \varepsilon}$
and 
consider the power series 
\[
\frac{F ( \TT ) }{
\prod_{i=1}^r ( 1 - \LL^{\rho_i} T_i )
}
=
\sum_{\dd \in \NN^r} \mathfrak b_d \TT^\dd .
\]
Then for all $ 0 < \eta < \varepsilon$
\[
\mathfrak b_\dd \LL^{ - \langle \bm \rho , \dd \rangle } - 
F ( \LL^{- \bm \rho } ) 
\]
has virtual dimension bounded above by 
\[
- \eta \min ( \rho_i d_i ) .
\]
\end{lemma}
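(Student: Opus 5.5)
Dividing by $\prod_i(1-\LL^{\rho_i}T_i)$ amounts to taking iterated partial sums. The coefficients of the quotient are
\[
\mathfrak b_\dd = \sum_{\mathbf e \leqslant \dd} \mathfrak a_{\mathbf e}\, \LL^{\langle \bm\rho, \dd - \mathbf e\rangle},
\]
where $\mathbf e\leqslant \dd$ means $e_i\leqslant d_i$ for every $i$. Multiplying by $\LL^{-\langle\bm\rho,\dd\rangle}$ gives
\[
\mathfrak b_\dd \LL^{-\langle\bm\rho,\dd\rangle} = \sum_{\mathbf e\leqslant \dd} \mathfrak a_{\mathbf e}\LL^{-\langle \bm\rho,\mathbf e\rangle},
\]
which is a truncation of the series defining $F(\LL^{-\bm\rho})$. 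The series $F(\LL^{-\bm\rho})$ makes sense in $\widehat{\MMM_S}$ because $\LL^{-\rho_i}$ lies strictly inside the region of absolute convergence. Hence
\[
\mathfrak b_\dd \LL^{-\langle\bm\rho,\dd\rangle} - F(\LL^{-\bm\rho}) = -\sum_{\mathbf e\not\leqslant \dd} \mathfrak a_{\mathbf e}\LL^{-\langle\bm\rho,\mathbf e\rangle}.
\]
Every index in this tail has some $e_j > d_j$. It therefore suffices to bound $\dim_S$ of each term by $-\eta\min_i(\rho_i d_i)$, because $\dim_S$ of a convergent sum in $\widehat{\MMM_S}$ is at most the supremum of the dimensions of its terms (the filtration is by subgroups and they are closed in the completion).

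The key input is a pointwise bound on the coefficients coming from the convergence hypothesis. I read ``converges absolutely for $|T_i|<\LL^{-\rho_i+\varepsilon}$'' as follows: for every $0<\eta<\varepsilon$ there is a constant $C_\eta$ such that
\[
\dim_S(\mathfrak a_{\mathbf e}) \leqslant \sum_i (\rho_i-\eta)e_i + C_\eta \quad\text{for all } \mathbf e.
\]
This is the unwinding of the $\limsup$ definition of the radius of convergence given just before the lemma. I would use it with an auxiliary $\eta'$ satisfying $\eta<\eta'<\varepsilon$. Each tail term then satisfies
\[
\dim_S\bigl(\mathfrak a_{\mathbf e}\LL^{-\langle\bm\rho,\mathbf e\rangle}\bigr) \leqslant -\eta'\sum_i e_i + C_{\eta'} \leqslant -\eta' e_j + C_{\eta'} < -\eta' d_j + C_{\eta'}.
\]

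It remains to compare $d_j$ with $\min_i \rho_i d_i$. When $\rho_j\leqslant 1$ (or more generally when the $\rho_i$ are normalized suitably) we have $d_j\geqslant \rho_jd_j\geqslant\min_i\rho_id_i$. In general I would run the same estimate with the weighted bound $\dim_S(\mathfrak a_{\mathbf e})\leqslant\sum_i\rho_i(1-\eta')e_i+C$, which also follows from the hypothesis after rescaling $\eta'$ by $\max_i\rho_i$ (the $\rho_i$ are positive). This gives each tail term dimension at most $-\eta'\rho_jd_j+C<-\eta'\min_i\rho_id_i+C$. Since $\eta'>\eta$, the surplus $(\eta'-\eta)\min_i\rho_id_i$ absorbs the constant $C$ once $\min_i\rho_i d_i$ is large. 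Finitely many small $\dd$ remain, and there the bound is read up to the usual additive constant, as in Bourqui's Lemme 12 \cite{bourqui2011quadriquesintrinseques}.

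The main obstacle is this last bookkeeping of constants and normalizations. The honest statement probably carries an implicit $O(1)$ term, or equivalently holds only for $\min_i\rho_id_i$ large. I expect to use the gap between $\eta$ and $\varepsilon$ to absorb it. I would also need to check that $\dim_S$ is subadditive for infinite convergent sums in $\widehat{\MMM_S}$; this is immediate from the definition of the filtration $\Fil^i$.
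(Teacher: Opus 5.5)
Your main line is the paper's proof, written more cleanly. The paper telescopes $\bigl(F(\TT)-F(\LL^{-\bm\rho})\bigr)/\prod_i(1-\LL^{\rho_i}T_i)$ into $\sum_j G_j$. The degree-$\dd$ coefficient of $G_j$, multiplied by $\LL^{-\langle\bm\rho,\dd\rangle}$, is exactly your tail $-\sum\mathfrak a_{\mathbf e}\LL^{-\langle\bm\rho,\mathbf e\rangle}$ restricted to those $\mathbf e$ for which $j$ is the smallest index with $e_j>d_j$. The paper also only proves the bound for $\dd$ large enough, as you anticipated.

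The step that does not work is your treatment of general $\bm\rho$. From the additive bound $\dim_S(\mathfrak a_{\mathbf e})\le\sum_i(\rho_i-\eta'')e_i+C$ with $\eta''<\varepsilon$, rescaling only gives $\dim_S(\mathfrak a_{\mathbf e})\le(1-\eta')\langle\bm\rho,\mathbf e\rangle+C$ with $\eta'=\eta''/\max_k\rho_k<\varepsilon/\max_k\rho_k$. So when some $\rho_k>1$ and $\eta$ is close to $\varepsilon$, there is no admissible $\eta'>\eta$, and the absorption of $C$ breaks down. This is not a bookkeeping issue: under your additive reading the lemma is false. Take $r=1$, $\rho_1=2$, $\mathfrak a_e=\LL^{e}$. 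Then $F$ converges for $|T_1|<\LL^{-2+\varepsilon}$ for every $\varepsilon<1$. But the difference equals $-\LL^{-d-1}(1-\LL^{-1})^{-1}$, of dimension $-d-1$, which exceeds $-2\eta d$ when $1/2<\eta<\varepsilon$ and $d$ is large.

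The fix is to read the hypothesis as the paper's proof does, namely as convergence in the direction $\bm\rho$ in the sense of the multivariate definition. That is, for every $0<\eta<\varepsilon$ one has $\dim_S(\mathfrak a_{\mathbf e})<(1-\eta)\langle\bm\rho,\mathbf e\rangle$ for all but finitely many $\mathbf e$. With this reading no normalisation is needed. A tail term with $e_j>d_j$ has dimension $<-\eta\langle\bm\rho,\mathbf e\rangle\le-\eta\rho_je_j<-\eta\rho_jd_j\le-\eta\min_i\rho_id_i$. The finite exceptional set is avoided as soon as $\min_id_i$ is large, since every tail index has a coordinate exceeding $\min_id_i$. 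In the application $\bm\rho=(1,\dots,1)$, so the two readings agree and your first estimate already suffices there.

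One smaller correction: for $r\ge2$ there are infinitely many $\dd$ with $\min_i\rho_id_i$ bounded, not finitely many. On that set the difference still has dimension bounded above by a uniform constant. So the statement holds there only up to $O(1)$, which is what ``$\dd$ large enough'' covers.
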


\begin{proof}
It goes as for \cite[Lemme 12]{bourqui2011quadriquesintrinseques}. 
The difference $\mathfrak b_\dd - 
F ( \LL^{- \bm \rho } ) \LL^{ \langle \bm \rho , \dd \rangle }$ 
is the coefficient of degree $\bm d \in \mathbf N^r$
of $ \frac{
        F ( \TT ) - F ( \LL^{- \bm \rho} )
    }
    {
       ( 1 - \LL^{\rho_1} T_1 )
       \cdots 
       ( 1 - \LL^{\rho_r} T_r )
    }$.
We can write 
    \[
    \frac{
        F ( \TT ) - F ( \LL^{- \bm \rho} )
    }
    {
        \prod_{i=1}^r ( 1 - \LL^{\rho_i} T_i )
    }
=
    \sum_{j=1}^r 
    \underbrace{\frac{
        F ( T_1 , ... , T_j , \LL^{-\rho_{j+1} } , ... , \LL^{-\rho_r} ) - F (  T_1 , ... , T_{j-1} , \LL^{-\rho_j } , ... , \LL^{-\rho_r}  )
    }
    {
        \prod_{i=1}^r ( 1 - \LL^{\rho_i} T_i )
    }
    }_{=: G_j ( \mathbf T ) }
    \]
where the degree $\bm d \in \mathbf N^r$ coefficient of $ G_j ( \mathbf T ) $ is
\[
\mathfrak b_{j,\bm d} 
= 
-
\sum_{\substack{
    0 \leqslant \delta_1 \leqslant d_1 
    \\
    \dots 
    \\
    0 \leqslant \delta_{j-1} \leqslant d_{j-1} 
    }
    }
\sum_{ \delta \in \mathbf N}
\sum_{
    \substack{
    \delta_{j+1} \in \mathbf N 
    \\
    \dots 
    \\
    \delta_r \in \mathbf N 
    }
    }
\mathfrak a_{
    \delta_1 , ... , \delta_{j-1} , d_j + \delta + 1 , \delta_{j+1} , ... \delta_r} 
\mathbf L^{
    - ( \delta + 1 )\rho_j + \sum_{\ell \neq j} ( d_\ell - \delta_\ell )\rho_\ell 
} . 
\]
By assumption,
we have 
\[
\dim ( \mathfrak a_{
    \delta_1 , ... , \delta_{j-1} , d_j + \delta + 1 , \delta_{j+1} , ... \delta_r}  ) 
< \langle 
( \delta_1 , ... , \delta_{j-1} , d_j + \delta + 1 , \delta_{j+1} , ... \delta_r ) 
 , ( 1 - \eta ) \bm \rho \rangle 
\]
for $0 < \eta < \varepsilon$ and $\bm d$ large enough,
hence the claim. 
\end{proof}


\section{Symmetric products and motivic Euler products}

\label{section:motivic-Euler-products}

The aim of this section is to recall the definitions of symmetric products and motivic Euler products from \cite{bilu2023MAMS}, and explain how one can interpret them as functions on the scheme of effective divisors of the curve $\mathscr C$.


\subsection{Symmetric products}
Following for example \cite[\S 6.1]{bilu-howe2021motivic-statistics},
we start with a few basic definitions concerning symmetric products of varieties.
These products are 
the first building blocks leading to the notion of motivic Euler product. 

\subsubsection*{Symmetric powers}
Let $S$ be a scheme, and let $X$ be a quasi-projective variety over $S$. If $n$ is a non-negative integer,
the $n$-th symmetric power of the variety $X  \to S$
is the quotient 
\[
\Sym_{/S}^n ( X ) 
= 
\underbrace{X \times_S ... \times_S X}_{n \text{ times}}
/ \mathfrak S_n.
\]

If $Y \to X$ is a variety over $X$, quasi-projective over~$S$, the symmetric power $\Sym^{n}_{/S}(Y)$ naturally comes with a morphism
\[
\Sym^n_{/S} ( Y )
\longrightarrow  
\Sym^n_{/S} ( X ).
\]

\subsubsection*{The incidence algebra}
We now explain how to define symmetric products of classes in $\KVar{X}$.
The product 
\[
\KVar{\Sym^\bullet_{/S} ( X ) }
:=
\prod_{n \geq 0}
\KVar{\Sym^n_{/S} ( X )}
\]
has the structure of a $\KVar{S}$-algebra, where addition is termwise, and the product is given by the Cauchy product 
\[
\left 
( 
(ab)_n
\right )_{n\in \NN}
=
\left 
( 
\sum_{i=0}^n a_i b_{n-i}
\right )_{n\in \NN}
\]
where for every $n\in \NN$ and $i \in \{ 0 , ... , n \}$,
the product comes from
the exterior product
\[
 a_i \boxtimes b_{n-i} \in \KVar{\Sym^i_{/S} ( X ) 
\times_\kb
\Sym^{n-i}_{/S} ( X ) }
\]
composed with the pushforward via the natural arrows 
\[
\Sym^i_{/S} ( X ) 
\times_\kb
\Sym^{n-i}_{/S} ( X ) 
\longrightarrow 
\Sym^n_{/S} ( X ) . 
\]
We denote by $\KVar{\Sym^{\bullet}_{/S}(X)}^1\subset \KVar{\Sym^{\bullet}_{/S}(X)}$ the subset of families $(a_n)_{n\geq 0}$ with ``constant term'' $a_0$ equal to 1. It is a subgroup of the multiplicative units in $\KVar{\Sym^{\bullet}_{/S}(X)}.$

Then there is a unique group morphism (see \cite[Lemma 3.5.1.2]{bilu2023MAMS})
\[
\KVar{X}\to \KVar{\Sym^\bullet_{/S} ( X )}^{1} 
\]
which to the class $[Y\to X]$ of an $X$-variety $Y$ associates the family 
\[
([\Sym^{i}_{/S}(Y)\to \Sym^i_{/S}(X)])_{i\geq 1}
\]
of classes of its symmetric powers; for every $a\in \KVar{X}$ we then denote by $\Sym^i_{X/S}(a)$ the $i$-th coordinate of its image via this morphism.

\subsubsection*{Generalized symmetric products} 

Let $I$ be a set and let $\pi\in \mathbf{N}^{(I)}$ be a generalized partition. Let $X$ be a quasi-projective variety over $S$. As before, all products are taken relatively to $S$. We denote

The product of permutation groups $\prod_{i\in I} \mathfrak{S}_{n_i}$ acts on the product $\prod_{i\in I} X^{n_i}$, inducing a quotient morphism
\begin{equation}\label{morphism:quotient_by_permutations}
\prod_{i\in I} X^{n_i} \to \prod_{i\in I} \Sym_{/S}^{n_i}X
\end{equation}
We denote by 
\[
\left( 
\prod_{i\in I} X^{n_i}
\right)_{*,X/S}
\]
the complement of the big diagonal in $\prod_{i\in I} X^{n_i}$, that is, the open subset consisting of points with coordinates all distinct. 

\begin{definition}
    The symmetric product $\Sym^{\pi}_{/S}(X)_*$ of $X$ relatively to $S$, corresponding to the partition $\pi$, is defined as 
    \[
    \left( \prod_{i\in I} \Sym^{n_i} X \right)_{*,X/S},
    \]
    that is, the image of the open subset $\left( 
\prod_{i\in I} X^{n_i}
\right)_{*,X/S}$ via the morphism (\ref{morphism:quotient_by_permutations}). 
\end{definition}

\begin{definition} Let now $\mathscr{X} = (X_i)_{i\in I}$ be a family of $X$-varieties, quasi-projective over $S$. We denote by $\Sym^{\pi}_{X/S} (\mathscr{X})_*$ the variety fitting into the top left corner of the following diagram:
\[
\begin{tikzcd}
\Sym^{\pi}_{X/S}(\mathscr{X})_* \ar[r] \ar[d]& \prod_{i\in I} \Sym^{n_i}(X_i)\ar[d] \\
\Sym^{\pi}_{/S}(X)_* \ar[r] & \prod_{i\in I} \Sym_{/S}^{n_i}X
\end{tikzcd}
\]
where the bottom horizontal arrow is the natural inclusion, and the rightmost vertical arrow is induced by the given morphisms $X_i\to X$. 
    
\end{definition}

\subsubsection*{Generalized symmetric product classes} We now extend the above construction to general classes in $\KVar{X}.$
Let $\mathscr A = ( a_i )_{i\in I}$
be a family of classes in $\KVar{X}$
and $\pi = ( n_i )_{i\in I} \in \mathbf N^{(I)}$.  We set 
\[
\Sym_{X/S}^\pi ( \mathscr A )_*
= 
\left(\boxtimes_{i\in I} \Sym_{X/S}^{n_i} ( a_i ) \right)_{*,X/S}
\]
in $\KVar{\Sym_{/S}^\pi ( X )_* }$, where $\boxtimes_{i\in I}$ is the exterior product and the subscript ``$*,X/S$'' indicates application of the restriction morphism
\[
\KVar{\prod_{i\in I} \Sym^{n_i}_{/S}(X)} \to \KVar{\Sym^{\pi}_{/S}(X)_*}
\]
induced by the inclusion $\Sym^{\pi}_{/S}(X)_*\to \prod_{i\in I} \Sym^{n_i}_{/S}(X).$

\begin{remark}[Points of symmetric products] We will repeatedly use the following description of points of symmetric products, explained in detail in \cite[Section 3.2.2.]{bilu2023MAMS}. 

Let $I$ be a set, and $\pi = (n_i)_{i\in I}$ a partition. Following in \cite[Section 3.2.2.]{bilu2023MAMS}, a schematic point $E\in \Sym^{\pi}_{/S}(X)_*$ can be written as an effective zero-cycle 
\[ 
\sum_{i\in I} i (v_{i,1} + \ldots + v_{i,n_i})
\]
where the $v_{i,j}$ are distinct geometric points of $X$ (lying in the same fibre above $S$). More generally, given a family $\mathscr{A} = (A_i)_{i\in I}$ of varieties over $X$, a point of $\Sym^{\pi}(\mathscr{A})$ lying above $E$ is written in the form
\[
\sum_{i\in I} i (a_{i,1} + \ldots + a_{i,n_i})
\]
where for every $i$, the $a_{i,j}$ are geometric points of $A_i$ such that $a_{i,j}$ maps to $v_{i,j}$. 
\end{remark}

\subsection{Mixed symmetric products}
\label{subsection:mixed-sym-prod}
We also need the more general notion of mixed symmetric products, introduced in \cite[Section 3.3.2]{bilu2023MAMS}. As before, let $X$ be a variety over $S$. All products of varieties in this sections are taken relatively to $S$. 

Let $p\geq 1$ be an integer, and let $I$ be a set. For every $j\in \{1,\ldots,p\}$, we fix a generalised partition ${\pi_{j} = (n_{i,j})_{i\in I}}$. We may define the variety $\Sym_{/S}^{\pi_1,\ldots,\pi_p}(X)_*$ to be the image of the complement of the big diagonal in
\[
\prod_{i\in I }\Sym_{/S}^{n_{i,1}}(X)\times \ldots \times\Sym_{/S}^{n_{i,p}}(X) . 
\]

More generally, let $\mathscr Y_1  = (Y_{i,1})_{i\in I}, ... , \mathscr Y_p = (Y_{i,p})_{i\in I}$ be 
families of varieties over $X$, all quasi-projective over $S$. 

For every $p$-tuple 
$ 
( \pi_1 , ... , \pi_p ) 
= ( ( n_{i,1} )_{i\in I} , ... , ( n_{i,p} )_{i\in I} ) 
 $ of generalised partitions of $I$,  
we consider the product
\[
\prod_{i\in I} Y_{i,1}^{n_{i,1}}\times \ldots \times Y_{i,p}^{n_{i,p}}  \to \prod_{i\in I }\Sym_{/S}^{n_{i,1}}(X)\times \ldots \times\Sym_{/S}^{n_{i,p}}(X).
\]
and restrict to $\Sym_{/S}^{\pi_1,\ldots,\pi_p}(X)_*$, to get the \emph{mixed symmetric product} 
\[
\Sym_{X/S}^{\pi_1 , ... , \pi_p} ( \mathscr Y_1 , ... , \mathscr Y_p ) _* \to \Sym^{\pi_1,\ldots,\pi_p}_{/S}(X)_*
\]
of the families $\mathscr{Y}_1,\ldots \mathscr{Y}_p$. 

Now let, for every $j \in \{1,\ldots,p\}$, $\mathscr{A}_j = (a_{i,j})_{i\in I}$ be a family of classes in $\KVar{X}$ indexed by the set~$I_j$.
The class
\[
\Sym_{X/S}^{\pi_1,\ldots,\pi_p}(\mathscr{A}_1,\ldots,\mathscr{A}_p)_*\in \KVar{\Sym^{\pi_1,\ldots,\pi_p}_{/S}(X)_*}
\]
is defined as the image of the class
\[
\prod_{i\in I} \Sym_{X/S}^{n_{i,1}}(a_{i,1})\times \ldots \times \Sym_{X/S}^{n_{i,p}}(a_{i,p})
\]
via the restriction morphism
\[
\KVar{\prod_{i\in I}\Sym_{/S}^{n_{i,1}}(X)\times \ldots \times\Sym_{/S}^{n_{i,p}}(X)}\to \KVar{\Sym^{\pi_1,\ldots,\pi_p}_{/S}(X)_*}.
\]

\begin{remark}\label{remark:mixed-sym-prod-with-extra-structure}
We will be in a situation where there is the following extra structure: the set $I$ is partitioned into subsets:
\[ 
I = \bigsqcup_{j=1}^pI_j
\]
and in fact every family $\mathscr{A}_j$ is supported above $I_j$. Then each partition $\pi_j$ may in fact be viewed as an element of $\NN^{(I_j)}.$
\end{remark}

\subsection{Motivic Euler products}\label{subsection:motivic_euler_products}

Let us now recall the definition 
of motivic Euler products introduced in \cite[Chap.~3]{bilu2023MAMS}. We will consider the following set-up, which will fit our purposes. 

Let $I$ be a set which is of the form $I_0\setminus \{0\}$ where $I_0$ is a commutative monoid. 
Given $\iota_1,\iota_2\in I$, the natural morphism
\[
\Sym^{\iota_1}_{/S}(X) \times_S \Sym^{\iota_2}_{/S}(X)\to \Sym^{\iota_1 +\iota_2}_{/S}(X)
\]
induces, by composition with the exterior product, a morphism
\begin{equation}\label{multiplication_for_incidence_alg}
\KVar{\Sym^{\iota_1}_{/S}(X)}\times \KVar{\Sym^{\iota_2}_{/S}(X)}\to \KVar{\Sym^{\iota_1 + \iota_2}_{/S}(X)}.
\end{equation}
We define 
\[
\KVar{\Sym^{\bullet}_{/S}X} := \prod_{\iota\in I_0}\KVar{\Sym^{\iota}_{/S}(X)}
\]
the $\KVar{S}$-algebra with addition given by coordinatewise addition, and multiplication defined using the morphisms (\ref{multiplication_for_incidence_alg}). We also define $\KVar{\Sym^{\bullet}_{/S}X}^1\subset \KVar{\Sym^{\bullet}_{/S}X}$ the subset of families $(a_{\iota})_{\iota\in I_0}$ with $a_0 = 1$. It is a subgroup of the group of multiplicative units of $\KVar{\Sym^{\bullet}_{/S}X}$. An element of $\KVar{\Sym^{\bullet}_{/S}X}^1$ may be viewed as a power series 
\[ 
\sum_{\iota\in I}A_{\iota}T_{\iota}
\]
with $A_0 = 1$, where each $A_\iota$ is viewed as an element of $\KVar{\Sym^{\iota}_{/S}(X)}$ and where the indeterminates $T_i$ satisfy relations $T_{\iota_1}T_{\iota_2} = T_{\iota_1 + \iota_2}$

Our motivic Euler products will be defined as such power series.

\begin{definition}
    \label{definition-motivic-euler-product}
    Let    $ \mathscr A = ( a_i )_{i\in I} $ 
    be a family of classes above $X$.
    The formal motivic Euler product 
    \[
    \prod_{x\in X/S}
    \left ( 
        1 + \sum_{i\in I} a_{i,x} T_i 
    \right ) 
    \]
    associated to $\mathscr A $
    is by definition the formal series
    \[
    \sum_{i\in I} \left( \sum_{\pi \vdash i}
        \Sym_{X/S}^\pi ( \mathscr A ) _* \right) T_i \in \KVar{\Sym^{\bullet}_{/S}(X)} . 
    \]
 \end{definition}
 
 We now define a refined setting. Assume additionally that $I = \bigsqcup_{j=1}^pI_j$ for some commutative semigroups $I_j$. We pick sets of indeterminates $(T_{i,j})_{i\in I_j,1\leq j \leq p}$ such that $T_{i_1,j}T_{i_2,j} = T_{i_1 + i_2,j}$ for all $j$ and all $i_1,i_2\in I_j$. 
 
 We denote for every $j$, $\bar{I}_{j} = I_j\cup\{0\}$ and define
 \[
 \KVar{\Sym^{\bullet}_{/S}(X)} = \prod_{(i_1,\ldots,i_p)\in \bar{I}_1\times \ldots \times\bar{I}_{j}}
 \KVar{\Sym^{i_1,\ldots,i_p}_{/S}(X)}
 \]
 again with coordinatewise addition, and appropriate multiplication operations. 
 
 \begin{definition}
    \label{definition-mixed-motivic-euler-product}
    For every $j\in \{1,\ldots,p\}$ let $ \mathscr A_j = ( a_{i,j} )_{i\in I} $ 
    be a family of classes in $\KVar{X}$
    The formal motivic Euler product 
    \[
     \prod_{x\in X/S}
    \left ( 
        1 + 
        \sum_{j=1}^p 
        \sum_{i\in I_j}
           a_{i,j,x}  T_{i,j}
    \right ) 
    \]
    is by definition the formal series
    \[
    \sum_{(i_1,\ldots,i_p)\in \bar{I}_1\times \ldots \times\bar{I}_{j}}
    \left(
    \sum_{\pi_1\vdash i_1 , ... , \pi_p \vdash i_j}
    \Sym_{X/S}^{\pi_1 , ... , \pi_p} ( \mathscr A_1 , ... , \mathscr A_p ) _* 
    \right)
    T_{i_1,1}\ldots T_{i_p,p} . 
    \]
 \end{definition}
 
\begin{remark}
All of these definitions work analogously for localised Grothendieck rings. 
\end{remark}

 \subsection{Symmetric products of varieties with characters} \label{subsection:symproducts_characters}

In this section we extend the notion of motivic Euler product from \cite{bilu2023MAMS} to the Grothendieck ring of varieties with multiplicative characters.
In particular, this will allow us to make sense of products of the specific form
\[
\prod_{v \in \CCC} 
\left ( 1
+
\sum_{m \in \ZZ_{\neq 0}^n} X_{m,v}  \chi_v ( m ) \mathbf T^m 
\right ) 
\]
appearing later as certain Fourier transforms. 
The approach is completely analogous
to how one defines motivic Euler products for varieties with exponentials in \cite[Section 3.6.1]{bilu2023MAMS}. 

Let $I$ be a set. Let $S$ be a scheme, $X$ a variety over $S$, 
$( M_i )_{i\in I}$
a family of 
commutative groups,
and 
\[
( \mathscr{Y}, h ) 
= ( Y_i \times_X D ( \underline{ L_i }_X ) , h_i : D ( \underline{L_i}_{Y_i} ) \to \GG_m )
\]
a family of quasiprojective varieties with characters over $X$. 
Let $\pi = (n_i)_{i\in I}\in \NN^{(I)}$ be a generalised partition. 

We consider the product 
\[
\prod_{i\in I} ( Y_i \times_X D ( \underline{ L_i }_X ) )^{n_i}
\]
(taken relatively to the base scheme $S$), endowed with the map
\[
h^{\pi}: \prod_{i\in I} D ( \underline{L_i}_{Y_i} )^{n_i} \to \GG_{m}
\] 
above $\prod_{i\in I} Y_i^{n_i}$
given by 
\[
(\chi_{i,1},\ldots, \chi_{i,n_i})
\mapsto 
\prod_{i\in I}
\prod_{j=1}^{n_i} h_i ( \chi_{i,j} ) .
\]
The aforementioned product comes with a morphism 
\[
\prod_{i\in I} Y_i^{n_i} \times_X D ( \underline{ L_i }_X )^{n_i} \to \prod_{i\in I} X^{n_i}
\] 
(all products are taken relatively to $S$) and we denote by 
\[
\left( \prod_{i\in I} Y_i^{n_i} \times_X D ( \underline{ L_i }_X )^{n_i} \right)_{*,X/S}
\]
the inverse image of the complement of the big diagonal in $\prod_{i\in I}X^{n_i}$. The map $h^{\pi}$ restricts to $\left( \prod_{i\in I} Y_i^{n_i} \times_X D ( \underline{ L_i }_X )^{n_i} \right)_{*,X/S}$, and is invariant modulo the permutation action of $\prod_{i\in I} \mathfrak{S}_{n_i}$, giving us a diagram
\[
\begin{tikzcd}
\Sym^{\pi} ( Y_i\times_X D( \underline{ L_i }_X) ) _{i\in I} \ar[r,"\Sym^{\pi}(h_i)_{i\in I}"] \ar[d]& \GG_m\\
\Sym^{\pi}_{X/S}(Y_i)_{i\in I}
\end{tikzcd}
\]
denoted $\Sym^{\pi}(\mathscr{Y},h)$.


\subsection{Higher order Grothendieck rings}

\begin{definition}[Grothendieck ring of varieties with characters of level $\pi$] \label{def:higher-order-grothendieck-ring} We define the Grothendieck group $\KCharVar{\pi,(M_i)_{i\in I}}{\Sym^{\pi}_{/S}(X)_*}$ as the free abelian group 
generated by isomorphism classes 
\[
[Y \times_{\Sym^{\pi}_{/S}(X)_*} \Sym^{\pi}_{X/S}((D(L_i))_{i\in I})_*, h ]_{\Sym^{\pi}_{/S}(X)_*\times_{\Sym^{\pi}_{/S}(X)_*}\Sym^{\pi}_{X/S}(D(\underline{M_i}_X))_{i\in I} }
\]
of diagrams
   \[ 
\begin{tikzcd}
    Y \times_{\Sym^{\pi}_{/S}(X)_*}\Sym^{\pi}_{X/S}((D(L_i))_{i\in I})_*\arrow[d]  \arrow[r,"h"] & \GG_m \\
    \Sym^{\pi}_{/S}(X)_*\times_{\Sym^{\pi}_{/S}(X)_*}\Sym^{\pi}_{X/S}D(\underline{M}_X) & & 
\end{tikzcd}
\]
such that $h$ is of the form $\Sym^{\pi}(h_i)_{i\in I}$ as in \cref{subsection:symproducts_characters}, modulo cut-and-paste relations on the first factor. When all the $M_i$ are equal to the same $M$, we simply write $\KCharVar{\pi,M}{\Sym^{\pi}_{/S}(X)_*}.$

\end{definition}

As for classical Grothendieck rings, it will be convenient to combine Grothendieck rings with characters into incidence algebras, which will be natural receptacles for the notion of motivic Euler product with characters. 

Let $I$ be a set which is of the form $I_0\setminus \{0\}$ where $I_0$ is a commutative monoid.

\begin{definition} 
    We denote by $\KCharVar{\iota,M}{\Sym^{\iota}_{/S}(X)}$ the free abelian group 
generated by isomorphism classes 
\[
[Y \times_{\Sym^{\iota}_{/S}(X)} \Sym^{\iota}_{X/S}(D(L_i))_{i\in I}, h ]_{ \Sym^{\iota}_{/S}(X)\times_{\Sym^{\iota}_{/S}(X)}\Sym^{\iota}_{X/S}D(\underline{M}_X) }
\]
of diagrams
   \[ 
\begin{tikzcd}
    Y \times_{\Sym^{\iota}_{/S}(X)}\Sym^{\iota}_{X/S}(D(L_i))_{i\in I}\arrow[d]  \arrow[r,"h"] & \GG_m \\
    \Sym^{\iota}_{/S}(X)\times_{\Sym^{\iota}_{/S}(X)}\Sym^{\iota}_{X/S}D(\underline{M}_X) & & 
\end{tikzcd}
\]
such that the restriction of $h$ to $\Sym^{\pi}_{X/S}((D(L_i))_{i\in I})_*$ for every $\pi\vdash \iota$ is as in \cref{def:higher-order-grothendieck-ring}, modulo cut-and-paste relations on the first factor.    

\end{definition}

For every partition $\pi$ of $\iota$, the inclusion $i_{\pi,\iota}:\Sym^{\pi}_{X/S}(\underline{M}_X) \to \Sym^{\iota}_{X/S}(\underline{M}_X)$ induces a restriction morphism
\[
\KCharVar{\iota,M}{\Sym^{\iota}_{/S}(X)} \to \KCharVar{\pi,M}{\Sym^{\pi}_{/S}(X)_*}.
\]

\begin{lemma}
These restriction morphisms induce an isomorphism
\[
\KCharVar{\iota,M}{\Sym^{\iota}_{/S}(X)}\simeq \prod_{\pi\vdash\iota}\KCharVar{\pi,M}{\Sym^{\pi}_{/S}(X)_*}.
\]
\end{lemma}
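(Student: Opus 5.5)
The plan is to use the stratification of $\Sym^{\iota}_{/S}(X)$ by the locally closed subschemes $\Sym^{\pi}_{/S}(X)_*$, $\pi\vdash\iota$, together with the fact that the Grothendieck group with characters only has cut-and-paste relations on the first factor. First I will check that the $\Sym^{\pi}_{/S}(X)_*$, for $\pi\vdash\iota$, are pairwise disjoint locally closed subschemes covering $\Sym^{\iota}_{/S}(X)$. A geometric point of $\Sym^\iota_{/S}(X)$ is an effective zero-cycle $\sum_v \iota_v v$ with $\sum\iota_v=\iota$, and its type $\pi=(n_i)$, with $n_i=\#\{v:\iota_v=i\}$, is uniquely determined. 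That finitely many $\pi$ occur follows from the finiteness of the partitions of $\iota$, which holds in the settings considered (for instance $I_0=\NN^r$ or $\NN$). Next I will order the strata by a degeneration order, so that the union of strata below a given one is closed. This lets me write $\Sym^\iota_{/S}(X)$ as a finite sequence of closed/open decompositions.

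Then I will define the inverse map. Given a family of generators $(\mathfrak a_\pi)_{\pi\vdash\iota}$, each a diagram $[Y_\pi\times\Sym^\pi_{X/S}(D(L_i))_*,h_\pi]$ over $\Sym^\pi_{/S}(X)_*$, I will push it forward along the locally closed immersion $\Sym^\pi_{/S}(X)_*\hookrightarrow\Sym^\iota_{/S}(X)$ on the first factor. Since $\Sym^\pi_{X/S}(D(L_i))_*$ is by definition the restriction of $\Sym^\iota_{X/S}(D(L_i))$ over the stratum, and $h_\pi$ is of the required form, the result is a valid generator of $\KCharVar{\iota,M}{\Sym^{\iota}_{/S}(X)}$. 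I then sum over $\pi$. This is well defined because pushforward along a locally closed immersion respects the cut-and-paste relations on the first factor.

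Finally I will check that the two maps are mutually inverse. Starting from a generator over $\Sym^\iota$, repeated use of the cut-and-paste relation along the finite stratification (an induction on the number of strata, peeling off a closed stratum each time) shows it equals the sum of its restrictions to the strata. Conversely, restricting a pushforward from one stratum to a different stratum gives the empty variety, so it contributes zero, and restricting to the same stratum gives back the original. The main subtlety is making sure that restriction of $h$ to each stratum really lands in generators of the form of \cref{def:higher-order-grothendieck-ring}. This holds by the very definition of $\KCharVar{\iota,M}{\Sym^{\iota}_{/S}(X)}$, so the step reduces to bookkeeping. The only genuine geometric input is the locally closed stratification, which I will import from \cite[Section 3.2]{bilu2023MAMS}, where it is used to prove the analogous statement without characters.
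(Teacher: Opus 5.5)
Your proposal is correct and follows essentially the same route as the paper. The paper's proof consists of exactly the inverse map you describe: a tuple of generators $(\mathfrak a_\pi)_{\pi\vdash\iota}$ is sent to the sum of their pushforwards along $\Sym^{\pi}_{/S}(X)_*\to\Sym^{\iota}_{/S}(X)$, with $h^\pi$ extended by $1$ off the stratum. The paper leaves implicit the locally closed stratification, the finiteness of the set of partitions of $\iota$, and the cut-and-paste check that the two maps are mutually inverse, all of which you make explicit.
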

\begin{proof}
Let $([Y_{\pi}\times_{\Sym^{\pi}_{X/S}(\underline{M}_X)}\Sym^{\pi}_{X/S}(D(L_{\pi,i})_{i\in I})_*,h^{\pi}])_{\pi\vdash \iota}$ be a generator on the right-hand side (beware that different partitions $\pi$ here a priori correspond to different families $(L_{\pi,i})_{i\in I}$ of groups with morphisms $M\to L_{\pi,i}$). We define $Y_{\pi,\iota}$ to be the variety $Y_{\pi}$ viewed above $\Sym^{\iota}_{/S}(X)$ via composition with the inclusion $\Sym^{\pi}_{/S}(X)_* \to \Sym^{\iota}_{/S}(X).$ Then we send our tuple to
\[
\sum_{\pi\vdash \iota}([Y_{\pi,\iota}\times_{\Sym^{\iota}_{X/S}(\underline{M}_X)}\Sym^{\iota}_{X/S}(D(L_{\pi,i})_{i\in I}),h^{\pi,\iota}])_{\pi\vdash \iota}
\]
where $h^{\pi,\iota}$ is defined to be $h^{\pi}$ extended by 1 outside of $\Sym^{\pi}_{X/S}(D(L_{\pi,i})_{i\in I}).$
\end{proof}

Given $\iota_1,\iota_2\in I$, the natural morphisms
\[
\Sym^{\iota_1}_{/S}(X) \times_S \Sym^{\iota_2}_{/S}(X)\to \Sym^{\iota_1 +\iota_2}_{/S}(X)
\]
and 
\[
\Sym^{\iota_1}_{/S}D(\underline{M}_X) \times_S \Sym^{\iota_2}_{/S}D(\underline{M}_X) \to \Sym^{\iota_1 + \iota_2}_{/S}D(\underline{M}_X)
\]
induce maps
\[
\KCharVar{\iota_1,M}{\Sym^{\iota_1}_{/S}(X)}
\times 
\KCharVar{\iota_2,M}{\Sym^{\iota_2}_{/S}(X)} 
\to 
\KCharVar{\iota_1 + \iota_2,M}{\Sym^{\iota_1 + \iota_2}_{/S}(X)}.
\]
These in turn allow to define a multiplication map on 
\[
\prod_{\iota}\KCharVar{\iota,M}{\Sym^{\iota}_{/S}(X)}.
\]

\begin{definition}
    \label{def:incidence-algebra-with-characters}
    We denote by 
\[
\KCharVar{\Sym^{\bullet}(M)}{\Sym_{/S}^{\bullet}(X)} =\prod_{\iota}\KCharVar{\iota,M}{\Sym^{\iota}_{/S}(X)}.
\]
the $\KVar{S}$-algebra with addition given by coefficientwise addition, and with multiplication given by the above multiplication map. 
\end{definition}

We can also do the same construction with localised Grothendieck rings, giving us the following definition.
\begin{definition} 
\label{def:incidence-algebra-with-characters-localised}
     We denote by 
\[
\CharM{\Sym^{\bullet}(M)}{\Sym_{/S}^{\bullet}(X)} =\prod_{\iota}\CharM{\iota,M}{\Sym^{\iota}_{/S}(X)}.
\]
the $\MMM_S$-algebra with addition given by coefficientwise addition, and with multiplication given by the analogue for localised Grothendieck rings of the above multiplication map.  
\end{definition}

Now that we have access to incidence algebras also in the setting of Grothendieck rings of varieties with characters, we may define, in the same way as for classical Grothendieck rings, (mixed) symmetric products of classes in the Grothendieck ring of varieties with characters. 

 \subsection{Motivic Euler products and characters} 

The definition of motivic Euler products in the setting of the Grothendieck rings of varieties with characters then works exactly as in \ref{subsection:motivic_euler_products}. Again, we give ourselves a set $I = I_0\setminus \{0\}$ where $I_0$ is a commutative monoid and indeterminates $(T_i)_{i\in I}$ satisfying $T_{i_1}T_{i_2} = T_{i_1 + i_2}.$
\begin{definition} Let $S$ be a scheme, $X$ a quasi-projective variety over $S$, and $M$ an abstract commutative group. Let $\mathscr{A} = (a_i)_{i\in I}$ be a family of classes in $\KCharVar{M}{X}$.
The motivic Euler product 
\[
\prod_{x \in X / S}
\left ( 
    1 + \sum_{i\in I}a_{i,x} T_i 
\right ) 
\]
is a notation for the series
\[
\sum_{i\in I}
\left(\sum_{\pi\vdash i}\Sym^{\pi}_{X/S}(\mathscr{A})_*\right) T_i \in \KCharVar{\Sym^{\bullet}(M)}{\Sym^{\bullet}_{/S}(X)}.
\]

\end{definition}

This notion of motivic Euler product with characters in particular allows us to define a notion of \textit{motivic $L$-function}.

\begin{definition}[Motivic $L$-function] \label{def:motivic-L-function}
The (universal) motivic $L$-function is the formal series
\[
L (\cdot, T )
=
\prod_{v\in \CCC} \left ( 
\sum_{n\geqslant 0} \ev_n T^n 
\right )
\in 
\KCharVar{\Sym^\bullet \ZZ}{\Sym^\bullet \CCC} 
\]
which should be understood as the motivic function 
sending $\chi$
to the formal series
\[
\prod_{v\in \CCC} ( 1 - \chi_v T )^{-1} . 
\]
\end{definition}


\subsection{Symmetric products and Hilbert 90}

We generalise \cite[Proposition 3.4.0.1]{bilu2023MAMS} to algebraic groups satisfying Hilbert 90. We say that an algebraic group $G$ satisfies Hilbert 90 if a $G$-torsor which is locally trivial for the étale topology is also Zariski-locally trivial. By \cite[Theorem 11.4]{milne}, the multiplicative group $\GG_m$ (and, thus, more generally, split tori) satisfies Hilbert 90. 

\begin{proposition}
    Let $X$ be a variety over $S$,
    $\mathscr Y = ( Y_i )_{i\in I}$ a family of varieties above $X$ and 
    $ \mathscr G = ( G_i )_{i\in I}$ a family of algebraic groups over $S$ satisfying Hilbert 90.
    Let $\mathscr Y \times \mathscr G$ be the family $ ( Y_i \times G_i )_{i\in I}$, each element of this family being viewed above $X$ via the first projection.

    Then for any generalised partition $\pi = ( n_i )_{i\in I}$, 
    the symmetric product 
    \[
    \Sym_{X/S}^\pi ( \mathscr Y \times \mathscr G ) _*
    \]
    is endowed with the structure of a $\prod_i G_i^{n_i}$-torsor 
    (for the Zariski topology) 
    above $\Sym_{X/S}^\pi ( \mathscr Y ) _*$. 
\end{proposition}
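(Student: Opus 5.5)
The plan is to follow the proof of \cite[Proposition 3.4.0.1]{bilu2023MAMS}: build the torsor étale-locally via a Galois cover trivialising the unordered points, and then use Hilbert 90 to pass to Zariski-local triviality.

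Write $n = \sum_i n_i$, and let $\left(\prod_{i\in I} Y_i^{n_i}\right)_{*,X/S}$ be the ordered configuration space. The quotient map
\[
p:\Big(\prod_{i\in I} Y_i^{n_i}\Big)_{*,X/S}\longrightarrow \Sym^\pi_{X/S}(\mathscr Y)_*
\]
is a finite étale Galois cover with group $\Gamma=\prod_i\mathfrak S_{n_i}$. This holds because $\Gamma$ acts freely: points lying over distinct points of $X$ cannot be permuted nontrivially.

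On the ordered side, $\left(\prod_i (Y_i\times_S G_i)^{n_i}\right)_{*,X/S}$ equals $\left(\prod_i Y_i^{n_i}\right)_{*,X/S}\times_S \prod_i G_i^{n_i}$. This is a trivial torsor under $\mathcal G:=\prod_i G_i^{n_i}$, acting by coordinatewise multiplication on the $G$-factors.

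The group $\Gamma$ acts on both factors by permuting within each block. The quotient by $\Gamma$ is $\Sym^\pi_{X/S}(\mathscr Y\times\mathscr G)_*$, since $\Gamma$ acts freely and compatibly with the projection.

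The main subtlety is that the $\mathcal G$-action does not commute with $\Gamma$, because $\Gamma$ permutes the factors of $\mathcal G$. So one must say in what sense $\mathcal G$ acts on the quotient. I would proceed as follows.
\begin{enumerate}
\item Étale-locally on the base $B=\Sym^\pi_{X/S}(\mathscr Y)_*$, after pulling back along $p$, the quotient becomes the trivial $\mathcal G$-torsor over the ordered configuration space.
\item Descend this by Galois descent. The descended object is a torsor under the twisted form $\mathcal G'$ of $\mathcal G$ obtained from $\Gamma$ acting through block permutations. Étale-locally this twist is isomorphic to $\mathcal G$, and it is a Weil restriction-type group, $\prod_i \mathrm{Res}_{E_i/B}(G_i)$ with $E_i$ the tautological degree-$n_i$ finite étale cover. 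This is the group realising the ``$\prod_i G_i^{n_i}$-torsor'' structure of the statement.
\item Concretely, the action is given on points: for $\sum_i (a_{i,1}+\dots+a_{i,n_i})$ with $a_{i,j}=(y_{i,j},g_{i,j})$, a section $(h_{i,j})$ of $\mathcal G'$ over that point, which is labelled by the $y_{i,j}$ rather than by indices, acts by $g_{i,j}\mapsto h_{i,j}g_{i,j}$.
\end{enumerate}
This is the step I expect to be the main obstacle: making the group precise and checking that the resulting torsor is étale-locally trivial, which follows from the local triviality of $p$.

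For Zariski-local triviality, I would reduce to the fact that the groups $G_i$ satisfy Hilbert 90, so that étale torsors under them are Zariski-locally trivial. The key input is Shapiro's lemma: $H^1_{\text{ét}}(B,\mathrm{Res}_{E/B}G)=H^1_{\text{ét}}(E,G)$, and a $G$-torsor on $E$ is Zariski-locally trivial on $E$. Zariski neighbourhoods in $E$ of the finitely many points over a given $b\in B$ can then be chosen as preimages of a Zariski neighbourhood of $b$. This uses semi-local trivialisation, which is available for $\mathbf G_m$ since line bundles are trivial on semi-local schemes, and likewise for split tori.

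In fact, for split tori one could avoid Shapiro's lemma. The torsor is then the complement of the zero section in a vector bundle, namely the pushforward of $\mathcal O_{E_i}^{\dim G_i}$ along $E_i\to B$, which is locally free and hence Zariski-locally trivial. I would present this concrete argument as a check of the general one.
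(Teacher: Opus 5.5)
The paper's proof consists of exactly your first two steps plus an appeal to Hilbert 90. It writes the Cartesian square with $q$ the étale quotient map, notes that $q'$ trivialises $p$ étale-locally, and concludes. It never addresses the point you raise, that translation by $\prod_i G_i^{n_i}$ does not commute with $\Gamma=\prod_i\mathfrak S_{n_i}$.

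Your objection is right, and it is fatal for the statement as written, not a technicality. The map $p$ is the twist of the constant scheme $\prod_i G_i^{n_i}$ by the $\Gamma$-torsor $q$, with $\Gamma$ permuting factors. So its isomorphism class comes from $[q]\in H^1(B,\Gamma)$ via $\Gamma\to\mathrm{Aut}(\prod_iG_i^{n_i})$, not from $H^1(B,\prod_iG_i^{n_i})$, and the finite group $\Gamma$ does not satisfy Hilbert 90.

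Here is a counterexample. Take $S=\Spec k$, $X=Y_1=\mathbf A^1_k$, $G_1=\GG_m$ and $\pi=(2)$.
\begin{enumerate}
\item Then $B=\Sym^2(\mathbf A^1)_*=\Spec k[s,p][(s^2-4p)^{-1}]$. The generic fibre of $p$ is the non-split torus $\mathrm{Res}_{L/K}\GG_m$, where $K=k(s,p)$ and $L=K[t]/(t^2-st+p)$ is a quadratic field.
\item Its coordinate ring is $K[x,y][N^{-1}]$ with $N=x^2+sxy+py^2$ irreducible. Hence its units modulo $K^\times$ form $\ZZ$, whereas for $\GG_{m,K}^2$ they form $\ZZ^2$.
\item So no non-empty open $U\subset B$ has $p^{-1}(U)\simeq U\times\GG_m^2$, even as schemes. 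Since $H^1(K,\GG_m^2)=0$, there is not even an étale $\GG_m^2$-torsor structure.
\item The motivic consequence fails too. Inclusion--exclusion on the norm-zero locus gives $[\Sym^2(\mathbf A^1\times\GG_m)_*]=\LL^2(\LL-1)^2$, which differs from $\LL(\LL-1)^3=(\LL-1)^2[B]$. Over $\mathbf F_q$ one counts $q^2(q-1)^2$ points.
\end{enumerate}
The vector-bundle case of \cite[Proposition 3.4.0.1]{bilu2023MAMS} works only because permutations act linearly on $\mathbf A^m$. The descent datum then lies in $\mathrm{GL}$, which does satisfy Hilbert 90; this is exactly what does not carry over ``almost verbatim''. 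The same example, with $\CCC$ a curve and $\pi=(2)$, also contradicts the example stated right after the proposition.

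Consequently your proposal cannot prove the statement either. The failing step is your second one, where the torsor under $\mathcal G'=\prod_i\mathrm{Res}_{E_i/B}G_i$ is declared to be ``the $\prod_iG_i^{n_i}$-torsor structure of the statement''.
\begin{itemize}
\item What you construct there is correct but tautological. Since $\Gamma$ acts on $\prod_iG_i^{n_i}$ by group automorphisms, $\Sym^\pi_{X/S}(\mathscr Y\times\mathscr G)_*$ is canonically isomorphic to $\mathcal G'$ itself. It is the trivial $\mathcal G'$-torsor, with unit section $y_{i,j}\mapsto(y_{i,j},e)$, so your Shapiro-lemma step is automatic and adds nothing.
\item The passage from $\mathcal G'$ to the constant group is exactly what fails. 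In the example, $\mathcal G'=\mathrm{Res}_{E_1/B}\GG_m$ with $E_1=(\mathbf A^1\times\mathbf A^1)\setminus\Delta$, and this is not Zariski-locally isomorphic to $U\times\GG_m^2$.
\item Your last paragraph is wrong on two counts. First, $\mathrm{Res}_{E/B}\GG_m$ is the open subscheme of the vector bundle $\mathrm{Res}_{E/B}\mathbf A^1$ where the norm is invertible, not the complement of the zero section; already for $E=B\sqcup B$ these are $\GG_m^2$ and $\mathbf A^2\setminus\{0\}$. Second, Zariski-local triviality of the ambient bundle does not trivialise this open subscheme, because the norm form splits into linear factors only étale-locally.
\end{itemize}
What survives is weaker: $p$ is an étale-locally trivial fibration with fibre $\prod_iG_i^{n_i}$ (and trivially a $\mathcal G'$-torsor). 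That does not allow you to factor $[\prod_iG_i^{n_i}]$ out of $[\Sym^\pi_{X/S}(\mathscr Y\times\mathscr G)_*]$.
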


\begin{proof}
    The argument used to prove \cite[Proposition 3.4.0.1]{bilu2023MAMS} works here almost verbatim. 
    Since 
    \[
    \left ( 
            \prod_{i\in I} Y_i^{n_i}
             \times \prod_{i\in I} G_i^{n_i}
        \right )_*
    \simeq 
    \left ( 
            \prod_{i\in I} Y_i^{n_i}
             \times \prod_{i\in I} G_i^{n_i}
        \right )
        \times_{\prod_{i\in I} X^{n_i} }
          \left ( 
            \prod_{i\in I} X_i^{n_i}
        \right )_*
    \simeq 
    \left ( 
            \prod_{i\in I} Y_i^{n_i}
        \right )_* \times \prod_{i\in I} G_i^{n_i}
    \]
    we have the Cartesian square:
    \[
    \begin{tikzcd}
        \left ( 
            \prod_{i\in I} Y_i^{n_i}
        \right )_* \times \prod_{i\in I} G_i^{n_i}
        \rar \dar["q'"] 
        & \left ( 
            \prod_{i\in I} Y_i^{n_i}
        \right )_* \dar["q"] \\
        \Sym_{X/S}^\pi ( \mathscr Y \times \mathscr G ) _*
        \rar["p"]
        &
         \Sym_{X/S}^\pi ( \mathscr Y  ) _*
    \end{tikzcd}
    \]
    where $q'$ is an étale-local trivialisation of $p$ and thus the claim follows by Hilbert 90 for $G_i$, $ i\in I$. 
\end{proof}

\begin{example}
    \label{example:familie-of-char-is-just-a-torsor}
    Let $S$ be the spectrum of a field $k$.
    Take $\mathscr Y$ to be the constant family $\mathscr C \to \mathscr C$
    and $(G_i)_{i\in I}$ to be the constant family $\mathbf G_{m,k}$. 
    Then the previous proposition tells us that
    \[
    \Sym_{\mathscr C / k}^\pi
    \left ( 
   \mathscr C \times_k   \mathbf G_{m} 
   \right )_*
   =
   \Sym_{\mathscr C / k}^\pi
    \left ( 
   \mathbf G_{m,\mathscr C} 
   \right )_*
    \]
    is a $\GG_m^{\sum_{i\in I}n_i}$-torsor above 
    $ \Sym_{/ k}^\pi
    \left ( 
   \mathscr C 
   \right )_*$,
   a Zariski trivialisation being canonically given by the Cartesian square 
   \[
   \begin{tikzcd}
        \left ( 
           \prod_i \mathscr C^{n_i}
        \right )_* \times  \mathbf G_m^{ \sum_{i\in I} n_i}
        \rar["{\pr_1}"] \dar["q'"] 
        & \left ( 
            \prod_i
            \mathscr C^{n_i}
        \right )_* \dar["q"] \\
       \Sym_{\mathscr C / k}^\pi
    \left ( 
   \mathbf G_{m,\mathscr C} 
   \right )_* 
        \rar["p"]
        &
         \Sym_{/ k}^\pi  \left (   \mathscr C  \right )_*.
    \end{tikzcd}
   \]
\end{example}


\subsection{Symmetric products of functions on constant group schemes}\label{subsection:sym-prod-constant-gp-schemes} 

In what follows, we will need to work with symmetric products in the following setting: let $X$ be a quasi-projective variety over $S$, let $(M_i)_{i\in I}$ be sets indexed by a set $I$ and consider, for every $i\in I$,
$Y_i = \underline{M_i}_X$ the corresponding constant $X$-scheme
(which, by definition, is the coproduct $\bigsqcup_{m\in M_i}X$, and thus is locally algebraic). In this section, products are taken relatively to~$S$. 

\subsubsection{Domain of definition}
Let $\pi\in \mathbf{N}^{(I)}$ be a partition. The product of symmetric groups $\prod_{i\in I}\mathfrak{S}_{n_i}$ acts on the product 
\[
\prod_{i\in I} (\underline{M_i}_X)^{n_i},
\] 
and we want to check that the corresponding quotient exists as a scheme. For this, by
\cite[Exposé V, Proposition 1.8]{SGA1} 
it suffices to check  that the  orbit of every point is contained in an affine open. Let $x\in \prod_{i\in I} (\underline{M_i}_X)^{n_i}$. Then the orbit of $x$ is contained in a finite union of products $X\times_S\ldots \times_S X$ of copies of $X$. Thus, for the sake of proving our statement we may assume that the $M_i$ are finite, and then it follows from quasi-projectivity of $X$ over $S$.  

Thus, the quotient $\prod_{i\in I}\Sym^{n_i}_{/S}(\underline{M_i}_X)$ exists as a (locally algebraic) scheme, and it is by construction endowed with a morphism to $\prod_{i\in I}\Sym^{n_i}_{/S}(X)$. By pulling back via the inclusion
\[
\Sym^{\pi}_{X/S}(X)_*\to \prod_{i\in I}\Sym^{n_i}_{/S}(X),
\] 
we also get a scheme $\Sym^{\pi}_{X/S}((\underline{M_i}_X)_{i\in I})_*$.

Assume that each $M_i$ is endowed with a commutative group structure. Then the group scheme structures $\underline{M_i}_X$ over $X$ canonically induce a group scheme structure
on 
\[
\prod_{i\in I} (\underline{M_i}_X)^{n_i},
\] which passes to the quotient and gives $\prod_{i\in I}\Sym^{n_i}_{X/S}(\underline{M_i}_X)$, 
respectively $\Sym^{\pi}_{X/S}((\underline{M}_X)_{i\in I})_*$, 
a group scheme structure over $\prod_{i\in I}\Sym^{\pi}_{/S}(X)$, resp. $\Sym^{\pi}_{/S}(X)_*$. The construction also extends to mixed symmetric products in the obvious way.

\subsubsection{Symmetric products of functions}
\label{subsection:symmetric-products-of-functions}
Let again $X \to S$ be a quasi-projective variety above a base scheme $S$.

Let $I$ and $( M_i )_{i\in I}$ be sets
and $( X_{m_i} )_{( m_i ) \in \prod_{i\in I} M_i}$ a family of varieties above $X$,
which we see as a motivic function 
on the constant group $\prod_{i\in I} \underline{M_i}_X$ above $X$. 
We consider the family 
\[
( \varphi_i )_{i\in I}
=
\left ( 
    \coprod_{m_i \in M_i} (  X_{m_i} \to X ) 
\right )_{i\in I}  . 
\]
For any generalised partion $\pi = ( n_i )_{i\in I}$,
we have canonical isomorphisms
\begin{align*}
    \prod_{i\in I}
        \left ( 
            \coprod_{m_i \in M_i} X_{m_i}
        \right )^{n_i}
  &   \simeq
    \prod_{i\in I}  
        \left ( 
            \coprod_{m_{i,1} \in M_i} X_{m_{i,1}}
        \right )
    \times_S
        \cdots 
    \times_S
        \left ( 
            \coprod_{m_{i,n_i} \in M_i} X_{m_{{i,n_i}}}
        \right ) \\
    & \simeq 
    \coprod_{ (m_{i,j} ) \in \prod_{i\in I} M_i^{n_i} } 
            \prod_{i\in I} 
            \left ( 
                X_{m_{i,1}} \times_S ... \times_S X_{m_{i,n_i}} 
            \right )
         . 
\end{align*}
For every $i\in I$,
the action of $\sigma_i \in \mathfrak S_{n_i}$ 
on $X^{n_i}$ permuting the factors 
induces an action on 
$ \left ( 
            \coprod_{m_{i,1} \in M_i} X_{m_{i,1}}
        \right )
    \times_S
        \cdots 
    \times_S
        \left ( 
            \coprod_{m_{i,n_i} \in M_i} X_{m_{{i,n_i}}}
        \right )
$
which does the following: for $( m_{i,j} )_{j=1}^{n_i} \in M_i^{n_i}$
identifies the component
$  X_{m_{i,1}} \times_S ... \times_S X_{m_{i,n_i}} $
with the component
$  X_{m_{i, \sigma (1)}} \times_S ... \times_S X_{m_{i,\sigma_i ( n_{i} )}}$
given by $(m_{i,\sigma ( j )} )_{j=1}^{n_i} \in M_i^{n_i}$
(the factors are the same, they are simply reordered)
relatively to $X^{n_i}$. 
Taking quotients with respect to this action leads to the commutative diagram
\[
\begin{tikzcd}
    \displaystyle \coprod_{ (m_{i,j} ) \in \prod_{i\in I} M_i^{n_i} } 
        \prod_{i\in I} X_{m_{i,1}} 
            \times_X ... \times_X X_{m_{i,n_i}} \rar \arrow[d,"{/\prod_{i\in I} \mathfrak S_{n_i}}"] &
        \displaystyle \prod_{i\in I} X^{n_i} \arrow[d,"{/\prod_{i\in I} \mathfrak S_{n_i}}"] \\
    \displaystyle \Sym^\pi_{X/S} ( ( \varphi_i )_{i\in I} )  \rar & \displaystyle \Sym^\pi_{/S} ( X ) 
\end{tikzcd}
\]
and one restricts to $\Sym^\pi_{/S} ( X )_* $ to get $\Sym^\pi_{X/S} ( ( \varphi_i )_{i\in I} )_*$. 

\begin{example}
Take $M_i = \mathbf Z$ and $X_i = X$ for every $i\in I$. Then 
\[
\left ( 
    \coprod_{m_i \in M_i} X_{m_i}
\right )_{i\in I} 
=
\left ( 
    \coprod_{\mathbf Z} X
\right )_{i\in I} 
=
\left ( 
    \underline{\mathbf Z}_X
\right )_{i\in I} 
\]
and the previous diagram becomes 
\[
\begin{tikzcd}
    \displaystyle \prod_{i\in I} \underbrace{\underline{\ZZ}_X \times_S ... \times_S \underline{\ZZ}_X}_{n_i \text{ times}} \arrow[r,"\simeq"] & 
    \displaystyle \coprod_{\prod_{i\in I} \ZZ^{n_i} } \prod_{i\in I} X^{n_i}
        \rar \arrow[d,"{/\prod_{i\in I} \mathfrak S_{n_i}}"] &
        \displaystyle \prod_{i\in I} X^{n_i} \arrow[d,"{/\prod_{i\in I} \mathfrak S_{n_i}}"] \\
    &  \Sym^\pi_{X/S} ( ( \underline{\mathbf Z}_X )_{i \in I} )  \rar & \displaystyle \Sym^\pi_{/S} ( X ) .
\end{tikzcd}
\]
The fibre above a point $D = \sum_{i\in I} D_i $
where $D_i = [p_{i,1} ] + ... + [p_{i,n_i}]$
is an unordered product 
\[
\oplus_{i\in I} \oplus_{i=1}^{n_i} \underline{\mathbf Z}_{\kappa ( p_i )} . 
\]
\end{example}


\section{Multiplicative motivic harmonic analysis}

\label{section:multiplicative-motivic-harmonic-analysis}

In this section we develop an elementary version of motivic harmonic analysis 
which in practice works well with abstract commutative groups and
their Cartier duals,
classically called \emph{diagonalisable algebraic groups}. 
The setting we adopt is sometimes more general than needed and we included alternative simpler proofs that work in the specific situation we consider in our application to the height zeta function.


\subsection{Fourier transforms and inversion formulas}
\label{subsect:general_local_fourier_transforms}
Let $S$ be a scheme and $M$ be an abstract commutative group. In this paragraph, by \emph{motivic function} we will simply mean an element of the relative Grothendieck ring $\CharM{M}{\underline{M}_S}$, which will be interpreted as a function on $\underline{M}_S$. 

Recall from Notation \ref{notation:evaluation_map} that we have the class 
\[
\ev 
= 
[\underline{M}_{S}\times_{S} D(\underline{M}_S), (m,\chi)\mapsto \chi(m)]
    \in \Char_M\MMM_{\underline{M}_S} . 
\] 

\begin{definition}\label{def:motivic-Fourier-transform} 
Let $M$ be an abstract commutative group and let $\psi\in \MMM_{\underline{M}_S}$ be a motivic function supported over finitely many points of~$\underline M_S$. The motivic Fourier transform of $\psi$ is defined as the class
\[
\four \psi 
= \psi\cdot \ev
\in \Char_M\MMM_{S}
\]
where the product is taken in $\Char_M\MMM_{\underline{M}_S}$ and then viewed in $\Char_M\MMM_{S}.$
We can see this as the motivic function associating 
\[
( \four \psi )
( \chi ) 
=
\sum_{m\in M} \psi ( m ) \chi( m )
\]
to any $\chi \in D(\underline{M}_S)$. 
\end{definition}

Assume $\psi$ is of the form $[X\to \underline{M}_S, 1].$ Then, explicitly, 
\[\four \psi  = [X\times_{\underline{M}_S}\underline{M}_S\times_SD(\underline{M}_S), (x,m,\chi)\mapsto \chi(m)]_{S\times_SD(\underline{M}_S)}\]

\begin{equation}\label{diagram-local-fourier-transform}
\begin{tikzcd}
\arrow[d]  X    &  \underline{M}_S \times_S D ( \underline{M}_S ) \drar["\pr_2"] \dlar["\pr_1"] \arrow[r,"\ev"] & \GG_{m,S}        \\
\underline{M}_S      \drar    &                                                              & D ( \underline{M}_S ) \dlar  \\ 
                &   S                                            &  \\   
\end{tikzcd}    
\end{equation}

\begin{example}\label{example_fourier_of_char_function}
Taking $\psi = [\{m\}\times S\subset \underline{M}_S,1]$ for some $m\in M$ (interpreted as the characteristic function $\mathbf{1}_{\{m\}}$ of $m$), we obtain 
\[ 
\four \psi  = [S\times_SD(\underline{M}_S),\chi\mapsto \chi(m)] = \ev(m,\cdot).
\]
\end{example}

\begin{proposition}[Motivic Fourier inversion formula]
\label{proposition-motivic-inversion-formula}
Let 
$\psi \in \MMM_{\underline M_S}$
be a motivic function supported over finitely many points of $M$.
Then the motivic Fourier transform of $\psi$ is well-defined and 
    \[
    \psi ( x )
    =
    \int_{D ( \underline M_s ) }
    ( \four \psi ) ( \chi )
    \cdot 
    \ev(-x,\chi)
    \]
for any $x \in \underline M_S$. 
\end{proposition}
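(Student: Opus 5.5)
The plan is to reduce, by linearity, to characteristic functions of points, and there apply the orthogonality relation (\ref{equation:orthogonality}). Since $\psi$ is supported over finitely many points of $\underline M_S = \coprod_{m\in M} S$, we can write $\psi = \sum_{m\in F} \psi_m \cdot \mathbf 1_{\{m\}}$ for a finite subset $F\subset M$, where $\psi_m \in \MMM_S$ is the restriction of $\psi$ to the component $\{m\}\times S \simeq S$, pushed back to $\underline M_S$ along this component. Two consequences follow from this decomposition. First, the product $\psi\cdot\ev$ in $\Char_M\MMM_{\underline M_S}$ is supported over a subscheme of finite presentation over $S$. Hence its pushforward to $S$ makes sense by the remark on pushforwards with finite support, and $\four\psi$ is well-defined. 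Second, both sides of the claimed identity are additive and $\MMM_S$-linear in $\psi$: the Fourier transform is a product followed by a pushforward, and $\int_{D(\underline M_S)}$ is $\KVar S$-linear by \cref{def:loc-integration-characters}, so it is also $\MMM_S$-linear after localisation. So it suffices to treat $\psi = \mathbf 1_{\{m\}}$.

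For $\psi = \mathbf 1_{\{m\}}$, \cref{example_fourier_of_char_function} gives $\four\psi = \ev(m,\cdot)$. The right-hand side is to be read as a function of $x$: we pull back $\four\psi$ to $\underline M_S$ and multiply by $\ev(-x,\chi)$, which is the pullback of $\ev$ under $x\mapsto -x$. It is therefore an element of $\Char_M\MMM_{\underline M_S}$, and we integrate relatively over $\underline M_S$. Checking the identity fibrewise, componentwise over $x\in M$, is legitimate because $\underline M_S$ is a disjoint union of copies of $S$. On the component indexed by $x$, the integrand is $\ev(m,\cdot)\ev(-x,\cdot) = \ev(m-x,\cdot)$, by the multiplicativity noted in \cref{notation:evaluation_map}. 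By (\ref{equation:orthogonality}) its integral is $1$ if $x=m$ and $0$ otherwise, i.e.\ it equals $\mathbf 1_{\{m\}}(x)$. Linearity then gives the general case, with the coefficient $\psi_m$ carried along as a scalar from $\KVar S$ (or $\MMM_S$).

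The main obstacle I expect is bookkeeping rather than mathematics. One needs the relative version of $\int_{D(\underline M_S)}$ over the base $\underline M_S$, and one must check that it commutes with restriction to each component $\{x\}\times S$, so that the equality in $\MMM_{\underline M_S}$ can be verified component by component. This holds because both the integration operator and the product are defined generator-wise and are compatible with the pullback $p^*$. Equality of the two elements of $\MMM_{\underline M_S}$ then follows from equality on each open-and-closed component, with no need even for \cref{lemma-motivic-functions-are-defined-on-points}.
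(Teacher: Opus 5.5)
Your argument is correct and is essentially the paper's proof: you reduce by linearity to $\mathbf 1_{\{m\}}$, apply \cref{example_fourier_of_char_function}, use $\ev(m,\cdot)\ev(-x,\cdot)=\ev(m-x,\cdot)$, and conclude with the orthogonality relation (\ref{equation:orthogonality}). The only difference is that the paper first reduces to $S=\Spec(k)$ via \cref{lemma-motivic-functions-are-defined-on-points}, whereas you keep coefficients $\psi_m\in\MMM_S$ and work componentwise on $\underline M_S$, which does make that lemma unnecessary; if you want the identity as an equality in $\MMM_{\underline M_S}$ rather than for each $x$, phrase it through the single cut of $\underline M_S$ into $\{m\}\times S$ and its complement (where $m-x$ never vanishes), instead of appealing to all infinitely many components at once.
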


\begin{proof}
    By \cref{lemma-motivic-functions-are-defined-on-points} we can assume $S = \Spec ( k ) $.
    The function $\psi$ can be written as a finite linear combination of indicator functions of points of $M = \underline M_{\Spec ( k ) }$,
    so it is enough to prove the statement for an elementary function 
    of the form
    \[
    \mathbf 1_{\{ x_0 \}}
    = 
    \left [
        \{ x_0 \} \hookrightarrow M
    \right ]_M 
    \]
    for some point $x_0$. 
    Then by \cref{example_fourier_of_char_function},
    \[
    \four \mathbf 1_{\{ x_0 \}} 
    =  
    \ev (  x_0, \scdot )
     \mathrm d
    \chi
    \]
    and 
    \[
     \int_{D ( \underline M ) } \four \mathbf 1_{\{ x_0 \}} ( \chi ) 
      \cdot   \ev(-x,\chi)
    = 
     \int_{D ( \underline M ) }
    \ev( x_0 - x,\chi) 
    \mathrm d
    \chi
    = 
    \begin{cases}
       1 & \text{ if } x = x_0 \\
        0 & \text{ otherwise }
    \end{cases}
    \]
 by (\ref{equation:orthogonality}),    hence the identity. 
\end{proof}


\subsection{Local motivic Poisson formula} 

Our
\cref{proposition-motivic-inversion-formula}
is actually a special case of the following. 

\begin{theorem}
[Local motivic Poisson formula]

\label{thm:local-Poisson-formula}
Let $ H\subset G$ be commutative groups. 
We identify $D ( G / H )$ with the kernel of the restriction morphism
$\mathrm{res}_{|H} : D ( G ) \to D ( H ) $. 

Then for any motivic function $\psi\in \MMM_{\underline{G}_S}$ supported over finitely many points of $G$, 
    \[
    \sum_{h \in H}
    \psi ( g + h )
    = 
   \int_{ D ( G / H ) }
    \four  \psi  ( \chi ) e ( - g, \chi ) 
    \mathrm d
    \chi    \]
in $\MMM_{S}$. In particular, 
taking $g = 0_G$,
one gets
    \[
\sum_{h \in H}
    \psi ( h )
    = 
    \int_{ D ( G / H ) }
    \four \psi ( \chi ) 
    \mathrm d 
    \chi.
    \]
\end{theorem}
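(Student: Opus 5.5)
The plan is to mirror the proof of \cref{proposition-motivic-inversion-formula}: reduce to a field, decompose $\psi$ into indicator functions of points, and conclude with the orthogonality relation (\ref{equation:orthogonality_with_quotient}).

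\emph{Reduction to points.} Both sides are $\MMM_S$-valued and compatible with base change $s^*$. For the left-hand side this holds because it is the pushforward to $S$ of the restriction of $\psi$ to the (finite, by the support hypothesis) set $g+H$ in $\underline{G}_S$. For the right-hand side it holds because the Fourier transform and $\int_{D(\underline{G/H}_S)}$ are defined generator by generator, compatibly with pullback. By \cref{lemma-motivic-functions-are-defined-on-points}, extended to $\MMM_S$ by clearing powers of $\LL$, we may assume $S=\Spec(\kappa)$ is a point.

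\emph{Reduction to indicator functions.} Then $\underline{G}_S$ is a disjoint union of copies of $S$, and $\psi$ is supported on finitely many of them. Hence $\psi=\sum_{x_0}a_{x_0}\mathbf 1_{\{x_0\}}$ with $a_{x_0}\in\MMM_S$ and only finitely many $x_0$ involved. Both sides are $\MMM_S$-linear in $\psi$: the left side obviously, and the right side because $\four$ is multiplication by $\ev$ followed by pushforward, and $\int_{D(\underline{G/H}_S)}$ is $\KVar S$-linear by \cref{def:loc-integration-characters} and \cref{def:restricted-local-integration}. It therefore suffices to treat $\psi=\mathbf 1_{\{x_0\}}$.

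\emph{The computation.} For $\psi=\mathbf 1_{\{x_0\}}$ the left-hand side equals $1$ if $x_0-g\in H$ and $0$ otherwise. On the right-hand side, \cref{example_fourier_of_char_function} gives $\four\mathbf 1_{\{x_0\}}=\ev(x_0,\cdot)$, and $\ev(x_0,\cdot)\ev(-g,\cdot)=\ev(x_0-g,\cdot)$ by \cref{notation:evaluation_map}. Then (\ref{equation:orthogonality_with_quotient}) gives $\int_{D(\underline{G/H}_S)}\ev(x_0-g,\chi)\,\mathrm d\chi$, which is $1$ if $x_0-g\in H$ and $0$ otherwise. The two sides agree, and the case $g=0_G$ follows by specialisation. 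Here $e(-g,\chi)$ in the statement is read as $\ev(-g,\chi)$.

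\emph{Main obstacle.} The only delicate point is justifying (\ref{equation:orthogonality_with_quotient}) from \cref{def:restricted-local-integration}. One must identify $D(\underline{G}_S)\times_{D(\underline{G}_S)}D(\underline{G/H}_S)$ with $D(\underline{G/H}_S)$, with the character $\chi\mapsto\chi(m)$ pulled back to $\chi\mapsto\chi(\bar m)$ for $\bar m$ the image of $m$ in $G/H$. One then applies \cref{def:loc-integration-characters} with $L=G/H$, noting that $\bar m=0$ exactly when $m\in H$. This is the compatibility with $i_!$ stated in the remark after \cref{def:restricted-local-integration}, and I would verify it there via \cref{prop:Cartier_dual_of_product} applied to the pushout of $G\to G$ and $G\to G/H$.
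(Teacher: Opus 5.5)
Your proposal is correct and follows essentially the same route as the paper: reduce to $S=\Spec(k)$ via \cref{lemma-motivic-functions-are-defined-on-points}, use linearity to reduce to $\psi$ supported at a single point, and conclude with \eqref{equation:orthogonality_with_quotient}. The only cosmetic difference is that you handle a general $g$ directly through $\ev(x_0,\cdot)\ev(-g,\cdot)=\ev(x_0-g,\cdot)$, whereas the paper first reduces to $g=0_G$ by translating $\psi$.
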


Here by $\sum_{h\in H}\psi(h)$ we mean the sum of the values of $\psi$ on the finite number of points in the intersection of $H$ with the support of $\psi$, and $\int_{D(G/H)}:\CharM{G}{S}\to \MMM_{S}$  is the operator defined in \cref{def:restricted-local-integration}.
\begin{proof}
By \cref{lemma-motivic-functions-are-defined-on-points} again
we can assume $S = \Spec ( k ) $. 
We can modify the proof of \cref{proposition-motivic-inversion-formula}
by 
replacing the functional \[
\psi \mapsto \psi ( x ) = \sum_{g \in G} \mathbf 1_{\{ 0 \}} ( g ) \psi ( x + g )
\]
by
\[
\psi \mapsto 
\sum_{h \in H} \psi ( x + h )
= 
\sum_{g \in G}
 \mathbf 1_H ( g ) \psi ( x + g ) 
 .
 \]
Up to replacing $\psi$ by a translate, it is sufficient to prove the second identity.  Moreover, by linearity we may assume that $\psi$ is supported on on a single point $g_0$, so that in fact $\psi = \psi(g_0)\mathbf{1}_{\{g_0\}}$. Then the left-hand side is equal to $\psi(g_0)$ if $g_0\in H$, and to 0 otherwise. 
Using  (\ref{equation:orthogonality_with_quotient}),
we get  
\[
 \int_{ D ( G / H ) }
    \four \psi ( \chi )
    \mathrm{d}
    \chi
= \psi(g_0) 
\int_{ D ( G / H ) }
\ev(g_0,\chi)
\mathrm{d}
\chi 
=
\begin{cases} 
\psi(g_0) & \text{if}\ g_0\in H\\
0 & \text{otherwise},
\end{cases}
\]
hence the result. 

\end{proof}


\subsection{Application to the abstract group of divisors}
\label{remark-application-to-Div_S(C)}

The (abstract) group $\Div_\mathsf S ( \mathscr C ) $ of divisors supported on a finite set $\mathsf S$ of points of $\CCC$ may be identified with $\ZZ^{\mathsf S}$. If a function $\psi$ is defined on $\Div_\mathsf S ( \mathscr C ) $, 
    it can be seen, via extension by zero, as a function on $\Div_{\mathsf S'} ( \mathscr C )$ 
    for every $\mathsf S ' $ containing $ \mathsf S$. 
    Then for every such $\mathsf S'$, the associated Fourier transform $\mathscr F \psi ( \chi ) $ will not depend on $\chi_v \in D ( \underline{\mathbf Z}_{\kappa ( v )} ) $ for $v\notin \mathsf S$.  
    
    Under the same hypothesis, 
    if $\psi$ is seen as a function on $\Div ( \CCC ) = \varinjlim_\mathsf S \Div_\mathsf S ( \CCC ) $,
    then for any subgroup $H \subset \Div ( \CCC ) $,
    $\psi$ is $H$-equivariant if and only if $\psi$ is $( H \cap \Div_\mathsf S ( \CCC ) )$-equivariant
    for any $\mathsf S$ such that $\psi$ is supported on $\Div_\mathsf S ( \mathscr C ) $.

Typically, 
in our application we will take 
\[
H
= 
\PDiv_\mathsf S ( \mathscr C ) 
\]
to be the group of principal divisors of $\CCC$ having support contained in a finite set $\mathsf S \subset | \CCC |$ of closed points of the curve $\mathscr C$ (or of $\CCC \otimes k ' $ for some extension $k ' / k $). 
In that case, $H$ is contained in the subgroup 
\[
H_0 = \Div_\mathsf S^0 ( \CCC ) = ( 1 , ... , 1 )^\perp \cap \Div_\mathsf S ( \CCC )
\]
of degree-zero divisors having support in $\mathsf S$. 
Then 
the commutative diagram of exact sequences
\[
\begin{tikzcd}
  & 0  \dar   & 0  \dar  &     & \\
 & H    \dar  & H  \dar  &   & \\
0 \rar & H_0 \dar \rar & G  \rar \dar  & G/H_0 \rar \arrow[d,equal]  & 0  \\
0 \rar & H_0/H \dar  \rar & G/H \dar \rar & (G/H)/(H_0 / H ) \rar & 0 \\
  & 0     & 0   &     & 
\end{tikzcd}
\]
induces by duality a diagram of exact sequences of diagonalisable algebraic groups. 

By choosing once and for all a $k$-divisor $\mathfrak{D}_1$ of degree one on the curve and enlarging $\mathsf S$ if necessary, 
we fix a section of the quotient morphism
\[
\deg : G \to G / H_0 
\]
and a splitting of the first horizontal sequence of $\mathbf Z$-modules
by writing any divisor $D$ as \[(D - \deg ( D ) \mathfrak D_1) + \deg ( D ) \mathfrak D_1.\]
This splitting induces a splitting of the second horizontal sequence
and by duality, splittings of $D ( G )$ and $D ( G / H )$.


\section{Toric harmonic analysis}

\label{section:toric-harmonic-analysis}


\subsection{Local invariant functions and their Fourier transforms} \label{sect:local_functions}
We will now specialize a bit more to the concrete setup that we need for our purposes. For this, we place ourselves in the setting described in the Notations section, with a curve $\CCC$ over a base field $k$. Let $U$ be a split  algebraic torus of dimension $n$ defined over $k$. We denote by $\mathcal{X}^{*}(U) = \Hom(U,\GG_m)$ its group of characters and $\mathcal{X}_{*}(U) = \Hom(\GG_m,U)$ its group of cocharacters.  There is a natural pairing
\[
\langle \cdot,\cdot \rangle: \mathcal{X}^{*}(U)\times \mathcal{X}_{*}(U) \to \ZZ.
\]
The aim of this section is to define motivic incarnations of families of local functions $(f_v:U(F_v)\to \CC)_{v\in \CCC}$ which are $U(\OO_v)$-invariant. The starting point here is that the local degree mapping
$$\deg_{F,v}:U(F_v)\to \mathcal{X}_*(U)$$
induces an isomorphism $$U(F_v)/U(\OO_v)\simeq \mathcal{X}_{*}(U),$$
so that such a local function may be viewed as a function on the free $\ZZ$-module $\mathcal{X}_{*}(U)$ of rank~$n$. 
Moreover, in our setting, we only need to consider such functions with finite support in $\mathcal{X}_{*}(U)$. 
This motivates the following definition: 

\begin{definition}\label{def:local_invariant_functions}
    A family of local motivic $U(\OO)$-invariant functions is an element of the ring $\MMM_{\underline{\mathcal{X}_{*}(U)}_{\CCC}}$ supported over finitely many points of $\mathcal{X}_*(U)$. These elements form a subring of  $\MMM_{\underline{\mathcal{X}_{*}(U)}_{\CCC}}$ denoted by 
    $\Fun(U(F_v)).$
\end{definition}

\begin{notation}\label{notation:characteristic_functions} Let $\aA \in \mathcal{X}_{*}(U)$ be a cocharacter of $U$. We denote by 
\[
\1_{ \tT^{\aA} U(\OO) }
\] 
the family of local motivic $U(\OO)$-invariant functions  given by the class of the subscheme
\[
\{\aA\}\times \CCC 
\hookrightarrow \mathcal{X}_{*}(U) \times_\kb \CCC  = \underline{\mathcal{X}_{*}(U)}_\CCC
\]
viewed in $\Fun(U(F_v)).$ When picking identifications $U\simeq \GG_m^n$ and $\mathcal{X}_{*}(U)\simeq \ZZ^n$, so that $\aA $ is viewed as a tuple of integers $(a_1,\ldots,a_n)$, it is the motivic incarnation of the family of characteristic functions of the subsets
\[
t_1^{a_1}\OO^{\times}_v \times \ldots \times t_n^{a_n}\OO^{\times}_v
    \subset 
(F_v^{\times})^n .
\]
as $v$ runs over points of $\CCC$.

\end{notation}

\begin{remark}
    The forgetful morphism
     $$\Fun(U(F_v))\to\MMM_{\CCC}$$
     may be interpreted as integration over $U(F_v)$ for each $v$. 
\end{remark}

By the theory of 
\cref{subsect:general_local_fourier_transforms} applied to the constant group scheme $\underline{\mathcal{X}_{*}(U)}_{\CCC}$ over $\CCC$, we then have access to a notion of Fourier transformation for such a family of functions, which naturally lives in $\CharM{\mathcal{X}_{*}(U)}{\CCC}$.

\begin{definition}
    A family of local motivic functions with characters on $U$ is
    an element of $\CharM{\mathcal{X}_{*}(U)}{\CCC}$. 
\end{definition}

Thus, given $f = (f_v)_{v}\in \Fun(U(F_v))$ 
a family of local motivic $U(\OO)$-invariant functions, 
its Fourier transform $\FFF f$ in the sense of \cref{def:motivic-Fourier-transform}
is a family of local motivic functions with characters on $U$. 

\begin{example}\label{example:fourier_of_char_function}
Let $\aA \in \mathcal{X}_*(U)$. 
By example \ref{example_fourier_of_char_function}, we have
\[ 
\FFF\1_{\tT^{\aA}U(\OO)} 
= \ev(\aA,\cdot) \in \Char_{\mathcal{X}_{*}(U)}\MMM_{\CCC}. 
\]
\end{example}


\subsection{Symmetric powers and divisors on the curve}

Before proceeding to describing the global situation, we insert a quick discussion of the way that symmetric products of the curve $\CCC$ will allow us to do Fourier analysis on the group of divisors of the curve. 

Let $I$ be a set, and $\pi = (n_i)_{i\in I}$ a partition. A schematic point $E\in \Sym^{\pi}_{/\kb}(\CCC)_*$ can be written 
\[ 
\sum_{i\in I} i(v_{i,1} + \ldots + v_{i,n_i}) 
\]
where the $v_{i,j}$ are geometric points of $\CCC$. 
Fixing an algebraic closure $K$ of the residue field $\kappa(E)$, and defining $\mathsf{S}_{i}$ to be the set of $\mathrm{Gal}(K/\kappa(E))$-orbits of elements in $\{v_{i,1},\ldots,v_{i,n_i}\}$, so that $\mathsf{S}_i$ gives a finite set of closed points  on $\CCC_{\kappa(E)}$. 

\subsection{Global $\KK(U)$-invariant functions and global Fourier transforms}

In this paragraph we define motivic incarnations of global adelic functions. More precisely, denoting by $\KK(U)$ the subset $\prod_{v}U(\OO_v)\subset U(\mathbf{A}_{F})$,
we define motivic incarnations of $\KK(U)$-invariant global functions of the form 
\[
f = \prod_{v}f_v,
\]
where the $f_v$ are local $U(\OO_v)$-invariant functions. For this to make sense motivically, we assume that the $f_v$ are chosen among a finite number of different possibilities.

Concretely, let $I$ be a set and let $(f_i)_{i\in I}$ a collection of families of local motivic $U(\OO)$-invariant functions. 
Then the number of factors $n_i$ where $f_v$ is given by $f_{i,v}$ is encoded by a partition $\pi = (n_i)_{i\in I}$. 

\begin{definition}\label{def-globalfunctions}
    Let $I$ be a set and let $\pi = (n_i)_{i\in I}$ be a partition. A family of global motivic $\KK(U)$-invariant functions of level $\pi$ is a finite linear combination, with integral coefficients, of elements of the form
    \[
    \Sym^{\pi}_{\CCC / \kb } 
    ((f_i)_{i\in I})_*
        \in 
    \mathscr M_{\Sym^{\pi}_{\CCC/ \kb} (\underline{\mathcal{X}_*(U)}_{\CCC})_* }
    \] 
    (in the sense of \cref{subsection:symmetric-products-of-functions})
    where for every $i\in I$, $f_i$ is a family of local motivic $U(\OO)$-invariant functions
    (having finite support, by definition). 
\end{definition}

\begin{remark} A family of global motivic $\KK(U)$-invariant functions of level $\pi$ naturally lives above 
\[
\Sym^{\pi}_{\CCC/\kb}(\underline{\mathcal{X}_{*}(U)}_{\CCC})_*
\to \Sym^{\pi}_{/\kb}(\CCC)_*.
\]
The bottom symmetric product $ \Sym^{\pi}_{/\kb}(\CCC)_*$ parametrises supports, and the intermediate symmetric product $\Sym^{\pi}_{\CCC/\kb}(\underline{\mathcal{X}_{*}(U)}_{\CCC})_*$ parametrises domains of definition. 

Concretely, the fibre of $ \Sym^{\pi}_{\CCC / \kb } ((f_i)_{i\in I})$ above $E = \sum i(v_{i,1} + \ldots + v_{i,n_i})\in \Sym^{\pi}_{/\kb}(\CCC)_*$ is interpreted as the adelic function given by $f_i$ at the places $v_{i,1},\ldots,v_{i,n_i}$ for each $i\in I$ (and just equal to 1 at all other places), and thus defined on \[\prod_{i\in I}U(F_{v_{i,1}})\times\ldots \times  U(F_{v_{i,n_i}})\] which, since the functions are $U(\OO)$-invariant, is modeled by the fiber $(\Sym^{\pi}(\underline{\mathcal{X}_{*}(U)}_{\CCC}))_{E} = \prod_{v\in |E|}\mathcal{X}_{*}(U)_v.$
\end{remark}

\begin{example}[Family of characteristic functions] 
\label{example:families-of-characteristic-functions}
Assume that 
there exists $\aA_i$ such that
\[
    f_i = \1_{t^{\aA_i} U(\OO)}
\]
for every $ i \in I$.
Then the fibre of $\Sym^{\pi}_{\CCC/\kb}((f_i)_{i\in I})_*$ above $E = \sum i(v_{i,1} + \ldots + v_{i,n_i})\in \Sym^{\pi}_{/\kb}(\CCC)_*$ is the global function
\[
\prod_{i\in I}\prod_{j=1}^{n_i}\1_{t^{\aA_i}U(\OO_{v_{i,j}})}.
\]
For example, if we are given a map $\gamma^\vee : I \to \mathcal X_* ( U )$, as it will be the case latter, then we can take $\bm a_i = \gamma^\vee ( i )$ for all $i \in I$.
Its provides a natural section
\[
\gamma^\vee : 
\Sym^\pi_{/\kb} ( \mathscr C )_* 
\to \Sym^{\pi}_{\CCC/\kb}(\underline{\mathcal{X}_{*}(U)}_{\CCC})_*
\]
given by 
\[
 \sum_{i\in I}
        i ( v_{i,1}+ \ldots + v_{i,n_i} )
\mapsto 
 \sum_{i\in I}
        i (( \gamma^\vee ( i ) ,v_{i,1})+ \ldots + ( \gamma^\vee ( i ) ,v_{i,n_i})) .
\]
\end{example}


\subsection{Summation over rational points} \label{subsection:summation_rat_points}
In this section, 
we define a motivic analogue of the operation which, to an appropriately integrable function $f:U(\mathbf{A}_F) \to \CC$ on the idèles associates
\[
    \sum_{x\in U(F)} 
        f(x).
\]
More specifically, since in the previous paragraph we have defined our functions in families (parametrised by symmetric products), we will explain how to perform this operation simultaneously for all functions in such a family. 

\begin{proposition}\label{prop:map-to-pic}
There is a morphism
\[
    \Sym^{\pi}_{\CCC/\kb}(\underline{\mathcal{X}_*(U)}_{\CCC})_*
    \to 
    \Hom_{\mathrm{gp}}(\mathcal{X}^{*}(U),\PPic(\CCC))
\]
given by
\[
    \sum_{i\in I}
        i((\bB_{i,1},v_{i,1})+ \ldots + (\bB_{i,n_i},v_{i,n_i}))
    \mapsto 
    \left( \aA\mapsto \left[\sum_{i,j}\langle \aA, \bB_{i,j}\rangle v_{i,j}\right]\right).
\]
\end{proposition}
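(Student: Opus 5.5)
We construct the morphism from an explicit family of line bundles on the curve, using the moduli property of the Picard scheme. Since $\mathcal{X}^*(U)$ is free of rank $n$, the target $\Hom_{\mathrm{gp}}(\mathcal{X}^*(U),\PPic(\CCC))$ is identified, once a basis $\aA^{(1)},\ldots,\aA^{(n)}$ of $\mathcal{X}^*(U)$ is chosen, with $\PPic(\CCC)^n$. It therefore suffices to build, for each fixed $\aA\in\mathcal{X}^*(U)$, a morphism
\[
\Phi_\aA:\Sym^{\pi}_{\CCC/\kb}(\underline{\mathcal{X}_*(U)}_{\CCC})_*\to\PPic(\CCC)
\]
that is additive in $\aA$, and then to take the product over the basis.

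\textbf{Reduction to components.} As explained in \cref{subsection:sym-prod-constant-gp-schemes}, the source is a disjoint union, indexed by $\prod_{i\in I}\mathcal{X}_*(U)^{n_i}$ modulo $\prod_i\mathfrak S_{n_i}$, of quotients of open subsets of products of copies of $\CCC$. We work on the cover
\[
W_{\bB}=\Bigl(\prod_{i\in I}\CCC^{n_i}\Bigr)_*,
\]
indexed by a tuple $\bB=(\bB_{i,j})$. On $W_\bB\times\CCC$ let $\Delta_{i,j}$ be the pullback of the diagonal along the $(i,j)$-th projection; each $\Delta_{i,j}$ is a relative effective Cartier divisor, flat over $W_\bB$. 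Form the relative divisor
\[
\mathcal D_{\aA,\bB}=\sum_{i,j}\langle\aA,\bB_{i,j}\rangle\,\Delta_{i,j}.
\]
Then $\OO(\mathcal D_{\aA,\bB})$ is a line bundle on $W_\bB\times\CCC$, which gives a morphism $W_\bB\to\PPic(\CCC)$. Its geometric points are $\bigl[\sum\langle\aA,\bB_{i,j}\rangle v_{i,j}\bigr]$, as required, and it is additive in $\aA$ because $\OO(\mathcal D_{\aA+\aA',\bB})=\OO(\mathcal D_{\aA,\bB})\otimes\OO(\mathcal D_{\aA',\bB})$.

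\textbf{Descent to the quotient.} This is the main obstacle. We must show the morphism descends along $\coprod_\bB W_\bB\to\Sym^\pi_{\CCC/\kb}(\underline{\mathcal{X}_*(U)}_\CCC)_*$. The permutation $\sigma\in\prod_i\mathfrak S_{n_i}$ sends the component $\bB$ to $\sigma\bB$, and $\sigma^*\mathcal D_{\aA,\sigma\bB}=\mathcal D_{\aA,\bB}$ holds literally as divisors. Hence the morphisms to $\PPic(\CCC)$ are $\mathfrak S$-equivariant for the trivial action on the target. The quotient is a categorical quotient: it is a geometric quotient of quasi-projective schemes by a finite group, restricted to the open free locus, as in \cite[Exposé V]{SGA1}. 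Invariant morphisms therefore factor uniquely through it.

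An alternative route, which avoids relying on any rigidification, is to work with the rigidified Picard functor. The existence of a degree-one divisor $\mathfrak D_1$ makes $\PPic(\CCC)$ representable and fine. Morphisms to it are then classes of line bundles up to pullbacks from the base, and invariance up to isomorphism is enough. On the free locus $(\cdot)_*$ the quotient map is étale, so fppf descent of morphisms also applies directly. Finally, we check that the image lands in $\Hom_{\mathrm{gp}}$ rather than in set maps, which is exactly the additivity in $\aA$ established above.
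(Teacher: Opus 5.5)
Your proposal is correct and is essentially the paper's argument: the paper also fixes coordinates, builds the map on the ordered cover $\bigl(\prod_{i\in I}(\underline{\ZZ^n}_{\CCC})^{n_i}\bigr)_*$ from the morphisms $v\mapsto (b_1[v],\ldots,b_n[v])$ combined via the group law of $\PPic(\CCC)$ (this is the morphism classified by your line bundle $\OO(\mathcal D_{\aA,\bB})$), and then passes to the quotient by $\prod_{i\in I}\mathfrak S_{n_i}$. Your justification of that last step, via invariance together with the categorical (indeed finite \'etale) nature of the quotient on the free locus, spells out what the paper asserts in one line; the ``alternative route'' paragraph is not needed, since a line bundle on $W_\bB\times\CCC$ already defines a morphism to $\PPic(\CCC)$.
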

\begin{proof} 
   To simplify notation, we will fix an isomorphism $\mathcal{X}_*(U)\simeq \ZZ^n$, so that our map becomes 
\[
    \begin{array}{ccc}
        \Sym^{\pi}_{\CCC/\kb}(\underline{\ZZ^{n}}_{\CCC})_* & \to & \PPic(\CCC)^n \\
        \sum_{i\in I}i((\bB_{i,1},v_{i,1})+ \ldots + (\bB_{i,n_i},v_{i,n_i}))& \mapsto & \left[\sum_{i,j}\bB_{i,j}v_{i,j}\right]
    \end{array}
\]
   where the square brackets mean that we take linear equivalence classes in each component. For every $\bB=(b_1,\ldots,b_n)\in \ZZ^n$ we have a morphism
\[
    \CCC\to \PPic^{b_1}(\CCC)\times\ldots\times \PPic^{b_n}(\CCC)
\]
   sending a point $v$ to the tuple of linear equivalence classes $(b_1[v],\ldots,b_n[v])$. Combining these all together gives a morphism
\[
    \underline{\ZZ^n}_{\CCC}\to \PPic(\CCC)^n.
\]
Let now $\pi = (n_i)_{i\in I}$ be a partition. Taking products and restricting to the complement of the diagonal in $\prod_{i\in I}\CCC^{n_i}$ we obtain a morphism
\[
    \left(\prod_{i\in I}\left(\underline{\ZZ^n}_{\CCC}\right)^{n_i}\right)_{*}\to \PPic(\CCC)^n,
\]
given by $(\bB_{i,j},v_{i,j})_{\substack{i\in I\\j\in \{1,\ldots,n_i\}}}\mapsto \sum_{i,j}\bB_{i,j}[v_{i,j}].$ This passes to the quotient with respect to the $\prod_{i\in I} \mathfrak{S}_{n_i}$--action, giving the morphism we want.  
\end{proof}

\begin{notation} 
\label{notation:PDiv}
We denote by $\PDDiv^{\pi}(\CCC)$ the fiber above $0$ of the morphism of \cref{prop:map-to-pic}.
\end{notation}

Recall that we also have a morphism $  \Sym^{\pi}_{\CCC/\kb}(\underline{\mathcal{X}_*(U)}_{\CCC})_* \to   \Sym^{\pi}( \CCC)$.

\begin{definition}\label{def:summation-over-PDiv}
Given $f\in \MMM_{\Sym^{\pi}_{\CCC/\kb}(\underline{\mathcal{X}_*(U)}_{\CCC})_*}$ a family of global motivic functions of level $\pi$, we denote by 
\[
 \sum_{\PDiv^{\pi}} f
\]
the image in $\MMM_{\Sym^{\pi}_{/\kb}(\CCC)_*}$ of the pullback of $f$ to $\PDDiv^{\pi}(\CCC)$. 
\end{definition}

\begin{notation}\label{notation:summ-over-PDiv-Euler-prod} Using the formalism of incidence algebras from \cref{subsection:motivic_euler_products}, we may then combine the operators $\sum_{\PDiv^{\pi}}$ into an operator 
\[
\sum_{\PDiv}: \MMM_{\Sym^{\bullet}_{\CCC/k}(\underline{\mathcal{X}_*(U)}_{\CCC})} \to \MMM_{\Sym^{\bullet}_{/\kb}(\CCC)}.
\]

\end{notation} 


\subsection{Global characters and character functions}

Analogously to  \cref{def-globalfunctions}, we define:
\begin{definition}
\label{def-globalfourierfunctions} 
A family of global  functions with characters of level $\pi$ is a finite linear combination, 
with integral coefficients, 
of elements of the form 
\[
\Sym^{\pi}_{\CCC / k} ((g_i)_{i\in I})_*\in \CharM{\pi, \mathcal{X}_*(U)}{\Sym^{\pi}_{/\kb}(\CCC)_*}
\]
where $(g_i)_{i\in I}$ is a family of local motivic character functions on $U$. 
\end{definition}

A family of global functions with characters of level $\pi$ naturally lives above the symmetric product $\Sym^{\pi}_{\CCC/k}(D ( \underline{\mathcal X_* ( U )}_\CCC ))\to \Sym^{\pi}_{/\kb}(\CCC)_*$, which we may think of as parameterizing families of global characters with support of size $|\pi|$. The fibre of $\Sym^{\pi}_{\CCC/k}((g_i)_{i\in I})$ above  $E = \sum i(v_{i,1} + \ldots + v_{i,n_i})\in \Sym^{\pi}_{/\kb}(\CCC)_*$ represents, through the local isomorphism $D(\underline{\mathcal{X}_{*}(U)}_{\CCC})_{v_{i,j}}\simeq U_{v_{i,j}}$, functions defined on the character domain  $$\prod_{i\in I} U_{v_{i,1}}\times \ldots \times U_{v_{i,n_i}}.$$ 

The natural evaluation pairing $\underline{\mathcal{X}_{*}(U)}_{\CCC}\times_{\CCC} D(\underline{\mathcal{X}_{*}(U)}_{\CCC})\to \GG_m$ 
induces a pairing
\[
\Sym^{\pi}_{\CCC/k}(\ev): \Sym^{\pi}_{\CCC/k}(\underline{\mathcal{X}_{*}(U)}_{\CCC})_* \times_{\Sym^{\pi}_{/\kb}(\CCC)_*}\Sym^{\pi}_{\CCC/k}(D(\underline{\mathcal{X}_{*}(U)}_{\CCC}))_*\to \GG_m,
\]
perfect relatively to $\Sym^{\pi}_{/\kb}(\CCC)_*$, in the sense that for every $E\in \Sym^{\pi}_{/\kb}(\CCC)_*$, it induces a perfect pairing
\[
\Sym^{\pi}_{\CCC/k}(\underline{\mathcal{X}_{*}(U)}_{\CCC})_{*,E} \times_{\kappa(E)}\Sym^{\pi}_{\CCC/k}(D(\underline{\mathcal{X}_{*}(U)}_{\CCC}))_{*,E}\to \GG_m,
\]
between the corresponding fibres.

\begin{remark}[Compatibility between Fourier transformation and symmetric products] \label{remark:compatibility_four_sym}
Let $(f_i)_{i\in I}$ be a collection of families of local motivic $U(\OO)$-invariant functions, i.e. each $f_i\in \MMM_{\underline{\mathcal X_{*}(U)}_{\CCC}}$, supported over a finite number of points of $\mathcal X_{*}(U)$. One may check that
\[
\Sym^{\pi}_{\CCC/\kb}(\four f_i)_{i\in I} = \Sym^{\pi}_{\CCC/\kb}((f_i)_{i\in I})_*\cdot \Sym^{\pi}_{\CCC/k}(\ev)
\]
where the product is taken in $\CharM{\pi,\mathcal X_{*}(U)}{\Sym^{\pi}_{\CCC/\kb}(\underline{\mathcal X_{*}(U)}_{\CCC})_*}$ and then viewed in $\CharM{\pi,\mathcal X_{*}(U)}{\Sym^{\pi}_{/\kb}(\CCC)_*}$.
\end{remark}


\subsection{From global to local} \label{remark:fibre_symproduct_and_pdiv} We explain in this section how, looking at the fibre above a point $E\in \Sym^{\pi}_{/\kb}(\CCC)_*$ we come back to a local situation like the one in \cref{section:multiplicative-motivic-harmonic-analysis}. 

Fix an identification $\mathcal{X}_{*}(U)\simeq \ZZ^n.$ Let $E\in \Sym^{\pi}_{/\kb}(\CCC)_*$, written $E = \sum_{i}i(v_{i,1} + \ldots + v_{i,n_i})$ and let, for every $i\in I$, $\mathsf{S}_i$ be the set of Galois orbits in $\{v_{i,1},\ldots,v_{i,n_i}\}$. Then the map
\[
\begin{array}{ccc} \Sym^{\pi}(\underline{\ZZ^n}_{\CCC}) & \to & \prod_{i\in I} \Div^n(\CCC)\\
 \sum_{i\in I}i((\bB_{i,1},v_{i,1})+ \ldots + (\bB_{i,n_i},v_{i,n_i}))& \mapsto & (\sum_{j}\bB_{i,j}v_{i,j})_{i\in I}
    \end{array}
\]
induces group isomorphisms
\[ 
\Sym^{\pi}(\underline{\ZZ^n}_{\CCC})_E \to \prod_{i\in I} \Div^{n}_{\mathsf{S}_i}(\CCC_{\kappa(E)}).
\]
Through this isomorphism, the subgroup $\PDiv^{\pi}(\CCC)_E\subset \Sym^{\pi}(\underline{\ZZ^n}_{\CCC})_E$ corresponds to the subgroup
\[ 
H_{\mathsf{S}} =\left\{ \left(\sum_{j}\bB_{i,j}v_{i,j}\right)_{i\in I} ,\ \sum_{i,j} \bB_{i,j}v_{i,j} \ \text{is principal in each component}\ \right\} 
\]
inside 
\[
G_{\mathsf{S}} = \prod_{i\in I} \Div^{n}_{\mathsf{S}_i}(\CCC_{\kappa(E)})
\]

As for the dual side, the fibre $\Sym^{\pi}_{\CCC/k}(D(\underline{\mathcal{X}_{*}(U)}_{\CCC}))_E$ is identified with the group of characters $D(G_{\mathsf{S}})$.


\subsection{Integration over invariant characters}

Let $A\in \CharM{\pi,\mathcal{X}_{*}(U)}{\Sym^{\pi}_{/\kb}(\CCC)_*}$. Then via \cref{remark:fibre_symproduct_and_pdiv}, for every $E\in \Sym^{\pi}_{/\kb}(\CCC)_*$, the fibre $A_E$ naturally defines an element of $\CharM{G_{\mathsf{S}}}{\kappa(E)}$. 

We then define
\[
\int_{E}A_E := \int_{D(G_{\mathsf{S}}/H_{\mathsf{S}})} A_E.
\]

\begin{definition}
    We say that $A\in \CharM{\pi,\mathcal{X}_{*}(U)}{\Sym^{\pi}_{/\kb}(\CCC)_*}$ is \emph{integrable} if there exists $B\in \MMM_{\Sym^{\pi}_{/\kb}(\CCC)_*}$ such that for all $E\in \Sym^{\pi}_{/\kb}(\CCC)_*$
    \[
    \int_{E}A_E = B_E.
    \]
    In this case we write
    \[ 
    \int_{(\PDiv^\pi)^{\perp}} A = B.
    \]
\end{definition}

\subsection{The motivic multiplicative Poisson formula}

Let $f = \Sym^{\pi}((f_i)_{i\in I})_*$ be a family of global motivic $\KK(U)$-invariant functions of level $\pi$, with  corresponding family of Fourier transforms 
\[
\four f := \Sym^{\pi}((\four f_i)_{i\in I})_* \in \CharM{\pi, \mathcal{X}_*(U)}{\Sym^{\pi}_{/\kb}(\CCC)_*}.
\]

\begin{theorem} 
\label{thm:mot-mult-Poisson-formula} 
Let $f$ be a family of global motivic $\KK(U)$-invariant functions of level $\pi$. Then $\four f$ is integrable and we have the equality
\begin{equation}\label{equation:Poisson_formula}
\sum_{\PDiv^{\pi}}f = \int_{(\PDiv^{\pi})^{\perp}}\four f
\end{equation}
in $\MMM_{\Sym^{\pi}_{/\kb}(\CCC)_*}$.
\end{theorem}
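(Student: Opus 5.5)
The plan is to reduce to a fibrewise statement and then apply the local motivic Poisson formula (\cref{thm:local-Poisson-formula}) in each fibre. Both sides of (\ref{equation:Poisson_formula}) are to be compared in $\MMM_{\Sym^{\pi}_{/\kb}(\CCC)_*}$. By \cref{lemma-motivic-functions-are-defined-on-points}, it suffices to show that for every point $E\in\Sym^{\pi}_{/\kb}(\CCC)_*$ the pullbacks to $\kappa(E)$ agree. The same reduction handles integrability. I will take $B:=\sum_{\PDiv^{\pi}}f$ as the candidate integral. Then integrability of $\four f$ with $\int_{(\PDiv^\pi)^\perp}\four f = B$ is \emph{exactly} the fibrewise identity $\int_E(\four f)_E = B_E$ for all $E$. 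So both the integrability claim and the equality reduce to a single pointwise statement.

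Fix $E=\sum_i i(v_{i,1}+\cdots+v_{i,n_i})$ and use the identifications of \cref{remark:fibre_symproduct_and_pdiv}. The fibre of $\Sym^{\pi}_{\CCC/\kb}(\underline{\mathcal X_*(U)}_\CCC)_*$ over $E$ is the abstract group $G_{\mathsf S}=\prod_i\Div^n_{\mathsf S_i}(\CCC_{\kappa(E)})$. Inside it, the fibre of $\PDDiv^\pi(\CCC)$ is the subgroup $H_{\mathsf S}$, and the dual fibre is $D(G_{\mathsf S})$. Under these identifications:
\begin{enumerate}
\item $f_E$ becomes a motivic function $\psi\in\MMM_{\underline{G_{\mathsf S}}_{\kappa(E)}}$. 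It has finite support, because each $f_i$ is finitely supported on $\mathcal X_*(U)$ and there are finitely many places.
\item Since taking fibres commutes with pullback and pushforward, $(\sum_{\PDiv^\pi}f)_E=\sum_{h\in H_{\mathsf S}}\psi(h)$, in the sense used in \cref{thm:local-Poisson-formula}.
\item By \cref{remark:compatibility_four_sym}, $(\four f)_E=\psi\cdot\Sym^\pi(\ev)_E$. Since $\Sym^\pi(\ev)_E$ is the evaluation pairing of $G_{\mathsf S}$ with $D(G_{\mathsf S})$, this is exactly the local Fourier transform $\four\psi$ of \cref{def:motivic-Fourier-transform}.
\end{enumerate}
Then $\int_E(\four f)_E=\int_{D(G_{\mathsf S}/H_{\mathsf S})}\four\psi$ by definition. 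The second identity of \cref{thm:local-Poisson-formula}, applied with $G=G_{\mathsf S}$ and $H=H_{\mathsf S}$ over $S=\Spec\kappa(E)$, gives $\sum_{h\in H_{\mathsf S}}\psi(h)$. This concludes the argument.

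The main obstacle I expect is the bookkeeping in steps 1 and 3. The first point is that the fibre of a symmetric product over a non-rational point $E$ involves the Galois-orbit description of the $v_{i,j}$. So the identification of the fibre of $\Sym^\pi(\underline{\ZZ^n}_\CCC)$ with the constant group $\underline{G_{\mathsf S}}_{\kappa(E)}$ must be checked to be compatible with the relevant structures, namely the Cartier duality, the evaluation pairing, and the subgroup $H_{\mathsf S}$ cut out by \cref{prop:map-to-pic}. My first attempt would be to pass to a finite extension of $\kappa(E)$ over which all $v_{i,j}$ become rational and check the compatibilities there, where everything is a finite product of copies of $\ZZ^n$. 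I would then descend, using that all constructions are defined over $\kappa(E)$ and that $H_{\mathsf S}$ is Galois-stable. A secondary point is to verify that fibrewise integration in the relative character ring agrees with the operator of \cref{def:restricted-local-integration}. This follows by checking on generators $[X\times D(\underline L),\chi\mapsto\chi(l)]$, for which both operators either kill the class or forget the character according to whether $l\in H_{\mathsf S}$.
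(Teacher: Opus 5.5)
Your proposal is correct and follows essentially the same route as the paper. Like the paper, you fix $E$ and identify the fibres with $H_{\mathsf S}\subset G_{\mathsf S}$ and $D(G_{\mathsf S})$ via \cref{remark:fibre_symproduct_and_pdiv}, use \cref{remark:compatibility_four_sym} to get $(\four f)_E=\four f_E$, apply \cref{thm:local-Poisson-formula} fibrewise, and conclude integrability (with integral $\sum_{\PDiv^\pi} f$) and the equality via \cref{lemma-motivic-functions-are-defined-on-points}; the extra bookkeeping you flag (Galois orbits over non-rational $E$, compatibility of fibrewise integration) is simply taken for granted in the paper's shorter proof.
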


\begin{proof} First of all, we may assume that $f = \Sym^{\pi}_{\CCC/k}(f_i)_{i\in I}$ where $f_i$ are families of local motivic $U(\OOO)$-invariant functions. 
We fix a support divisor
$E= \sum_{i\in I}i
(v_{i,1} + \ldots + v_{i,n_i})\in \Sym^{\pi}_{/\kb}(\CCC)_* $ and we define $\mathsf{S}$, as well as the groups $H_{\mathsf{S}}\subset G_{\mathsf{S}}$ as in \cref{remark:fibre_symproduct_and_pdiv}. We note that $f_E\in \MMM_{G}$ is supported over finitely many points of $G$, so we may apply \cref{thm:local-Poisson-formula}, getting:
\[
\sum_{h\in H_{\mathsf{S}}} f_E(h) = \int_{D(G_{\mathsf{S}}/H_{\mathsf{S}})}\four f_E.
\]
On the other hand, we have, by definition, 
\[
\int_{E}(\four f)_{E} =  \int_{D(G_{\mathsf{S}}/H_{\mathsf{S}})} (\four f)_E
\]
Since $\four f_{E} = (\four f)_{E}$ by \cref{remark:compatibility_four_sym}, this shows integrability and, using \cref{lemma-motivic-functions-are-defined-on-points}, also equation (\ref{equation:Poisson_formula}). 

\end{proof}

\section{Application to the motivic height zeta function}

\label{section:application-motivic-height-zeta-function}


\subsection{An introductory example: the projective line}
We explain without any proof what are the objects coming into play in the very specific case of 
$X = \PP^1_{F}$ viewed as a toric variety. 
Define for every non-zero $m\in \ZZ$
\[
\1_{t^m\OO^{\times}} = \{m\}\times \CCC
\]
as a subscheme of the constant group scheme $ \underline \ZZ_\CCC $. Its fibre above $v\in \CCC$ is denoted 
$\1_{t^m\OO_v^{\times}}$ 
and is the motivic incarnation of the $\OO_v^\times$-invariant function 
$\1_{t^m\OO_v^{\times}}$ 
on the completion $F_v$ of $F$ at $v$. 
Thus we get a family 
\[
( \1_{t^m\OO^\times} )_{m\in \ZZ} 
\]
of subschemes of $ \underline \ZZ_\CCC $ which can be used to define a generalised motivic Euler product like those that are described above. 

In this notation, denoting by $T_-, T_+$ our indeterminates, the local height is the motivic function on $\underline \ZZ_\CCC$
\[
H_v(\cdot,\mathbf{T}) 
    = 
1 
+ \sum_{m >0} 
\1_{t^{-m}\OO_v^\times} T_-^{m} 
+ \sum_{m > 0}
\1_{t^m\OO_v^\times}T_+^m
\in \KVar {\underline \ZZ_\CCC} [ \mathbf{T} ] 
\]
fitting into a motivic Euler product
\[
H(\mathbf{T},\cdot ) 
= \prod_{v\in \CCC}
        \left(
            1 + \sum_{m >0} \1_{t^{-m}\OO_v^\times} T_-^{m} + \sum_{m>0} \1_{t^m\OO_v^\times}T_+^m
        \right)
        . 
\]
The expansion of this motivic Euler product is indexed by pairs of non-negative integers $(n_+,n_-)$, and the coefficient corresponding to $(n_+,n_-)$ lives in the Grothendieck ring above the variety $\Sym^{(n_+,n_-)}_{/ \kb} ( \CCC )_*$ 
parametrising zero-cycles $D$ on $\CCC$ of the form 
\[
D_+ - D_- 
\]
where $D_+,D_-$ are effective with disjoint supports (hence the subscript $*$), of respective degrees $n_+$ and $n_-$. Each coefficient may be viewed as a family of characteristic functions of balls inside the idèles $\mathbb{A}^{\times}_F$, parametrised by $\Sym^{(n_+,n_-)}(\CCC)_*$, the fibre above $D = \sum_{v}m_vv$ being the (motivic incarnation of the) function $\1_{D}:= \prod_{v} \1_{t^{m_v}\OO^{*}_v}$.

In \cref{subsection:summation_rat_points} we defined a summation operator \[
\sum_{\PDiv}
\]
as a pullback to principal divisors, so that applying this operator to $H(\cdot,\mathbf{T})$ kills the coefficients with $n_+ \neq n_-$, and for those with $n_+ = n_- = n$, only the divisors $D$ which are principal contribute. This way 
\[
(\mathbf{L}-1)
\sum_{\PDiv}H(\cdot,\mathbf{T}) = \sum_{n\geq 0}
\left [ 
    \Hom^n(\CCC,\mathbf{P}^1)_{\GG_m} 
\right ] 
\mathbf{T}^{\langle \cdot, n\rangle_{\Sigma}} . 
\]


The relative Cartier dual of $ \underline \ZZ_\CCC $ is $\GG_{m,\CCC}$
and the local Fourier transforms are 
\[
\four \1_{t^m\OO^{\times}}(\cdot) = \mathrm{ev}_{\CCC}(m,\cdot), 
\]
where 
\[
\mathrm{ev}_{\CCC}: \ZZ_{\CCC}\times_{\CCC} \GG_{m,\CCC} \to \GG_{m,\CCC}
\]
is the evaluation map (relative to $\CCC$). Explicitly,
$\four \1_{t^m\OO^{\times}}( \chi_v ) = \chi_v(m). $
Thus, 
\begin{align*}
\four H_v(T,\cdot)( \chi_v )  &=  1 + \sum_{m< 0} \chi_v(m)T_-^{m} + \sum_{m> 0} \chi_v(m)T_+^{m} \\  
& = \frac{1}{1-\chi_{v}(1)T_+} + \frac{1}{1-\chi_{v}^{-1}(1)T_-} - 1 \\
& = \frac{1-T_-T_+}{(1-\chi_v(1)T_+)(1-\chi_v^{-1}(1)T_-)}. 
\end{align*}
Note that this special case has the peculiarity that the numerator of the local Fourier transform does not depend on the character.
Taking motivic Euler products with characters in the sense of \cref{section:motivic-Euler-products}
provides the motivic Euler product with characters
\begin{align*}
\prod_{v\in \CCC} 
    \four H_v(T,\cdot)( \chi_v  ) 
&  = \prod_{v\in \CCC} \frac{1-T_-T_+}{(1-\chi_v(1)T_+)(1-\chi_v^{-1}(1)T_-)} \\
&  = Z_{\CCC}^{\mathrm{Kap}}(T_-T_+)^{-1} L(\chi, T_+)L(\chi^{-1}, T_-), 
\end{align*}
where $(\chi_v)$ is a family of local characters all living in $\GG_{m}$, and $L$ denotes the motivic $L$-functions from \cref{def:motivic-L-function}. 

Applying the motivic Poisson formula to each of the coefficients of this series leads to the expression
\[
\sum_{d \in \mathbf N}
\left [
\Hom_\kb ( \CCC , \mathbf P^1 )_{U}
\right ] T^d 
= 
\left ( 
\LL - 1 
\right ) 
\int_{\PDiv^\perp } \four H(T, \chi ) \mathrm d \chi . 
\]



\subsection{General geometric setting}

Let $k$ be a field of characteristic zero and let $U$ be an algebraic torus of dimension $n$ defined over $k$. We denote by $\mathcal{X}^{*}(U) = \Hom(U,\GG_m)$ its group of characters and $\mathcal{X}_{*}(U) = \Hom(\GG_m,U)$ its group of cocharacters.  There is a natural pairing
\begin{equation}\label{eq:toric-pairing}\langle \cdot,\cdot \rangle: \mathcal{X}^{*}(U)\times \mathcal{X}_{*}(U) \to \ZZ.\end{equation}

Let $\Sigma$ be a projective and regular fan of the $\ZZ$-module $\mathcal{X}_{*}(U)$. We denote by $\Sigma(1)$ the set of its rays (that is, its one-dimensional faces), and for every cone $\sigma\in \Sigma$ we denote by $\sigma(1)$ the set of rays of $\sigma$. For every $\alpha\in \Sigma(1)$, we denote by $\rho_{\alpha}$ a generator of the ray $\alpha$. We denote by $\mathrm{PL}(\Sigma)$ the group of piecewise linear functions on $\Sigma$. The pairing (\ref{eq:toric-pairing}) 
naturally extends to a pairing 
\begin{equation}\label{eq:PL-pairing}
\langle\cdot,\cdot\rangle_{\Sigma}:\mathrm{PL}(\Sigma)\times \mathcal{X}_{*}(U) \to \ZZ.
\end{equation}
given by $\langle \varphi,\bm n\rangle_{\Sigma} = \varphi(\bm n)$,
in the sense that the diagram 
\[
\begin{tikzcd}
\mathcal X^* ( U ) \times \mathcal X_* ( U ) \rar["{\langle \cdot , \cdot \rangle}"] \dar & \mathbf Z \dar \\
\PL ( \Sigma ) \times \mathcal X_* ( U ) \rar["{\langle \cdot , \cdot \rangle_\Sigma}"] & \mathbf Z
\end{tikzcd}
\]
commutes. 

We now assume that $U= \GG_{m}^n$ is split. Then the fan $\Sigma$ defines a smooth projective split toric variety $X_{\Sigma}$ with open orbit $U$. Each $\rho_{\alpha}$ gives rise to a $U$-invariant divisor $\mathfrak{D}_{\alpha}$ on $X_{\Sigma},$ and the natural map
\[\varphi\mapsto \sum_\alpha \varphi ( \rho_\alpha ) \mathfrak D_\alpha\]
defines an isomorphism $\mathrm{PL}(\Sigma) \simeq \bigoplus_{\alpha}\ZZ \mathfrak{D}_{\alpha}$. We will often use this identification in what follows: in particular, $\mathfrak{D}_{\alpha}$ represents the piecewise linear function which sends $\rho_{\alpha}$ to 1 and all other $\rho_{\beta},\ \beta\neq \alpha$ to 0. 

We then have an exact sequence
\begin{equation}
    \label{equ:fundamental-exact-sequence-toric}
    0
    \longrightarrow
    \mathcal X^* ( U ) 
    \overset{\gamma}{
    \longrightarrow}
    \PL ( \Sigma ) 
    \longrightarrow
    \Pic ( X_\Sigma ) 
    \longrightarrow
    0
\end{equation}
where $\gamma$ sends an element $\bm m \in \mathcal X^* ( U ) $
to the unique $\varphi $ in $ \PL ( \Sigma )$ such that $\varphi ( \rho_\alpha ) = \langle \bm m , \rho_\alpha \rangle $
for every $\alpha \in \Sigma ( 1 ) $,
while the second arrow sends each $\mathfrak{D}_{\alpha}$ to its class in $\Pic ( X_\Sigma ) $.

In what follows, we will work with a family of variables $(T_{\alpha})_{\alpha\in \Sigma(1)}$ indexed by $\Sigma(1)$. We will write
\[\mathbf{T}^{\langle \cdot,\bm n\rangle_{\Sigma} }:= \prod_{\alpha}T_{\alpha}^{\langle \mathfrak{D}_{\alpha},\bm n\rangle_{\Sigma}}. \]
Let $\sigma$ be the cone of $\Sigma$ containing $\bm n$. Then, by definition, there is a unique representation $\bm n = \sum_{\alpha\in \sigma(1)} n_{\alpha}\rho_{\alpha}$ for some positive integers $n_{\alpha}>0$, and $\mathbf{T}^{\langle \cdot,\bm n\rangle_{\Sigma} } = \prod_{\alpha\in\sigma(1)}T_{\alpha}^{n_{\alpha}}.$

For any $\sigma \in \Sigma$, let $I_\sigma$ be the subset of $\mathbf N^{\Sigma ( 1 )}$ given by
\[
I_\sigma = \{ \bm n \in \mathbf N^{\Sigma ( 1 )} \mid n_\alpha > 0 \Leftrightarrow \alpha \in \sigma ( 1 ) \}. 
\]
In particular, $I_{\{ \bm 0 \}} = \{ \bm 0 \}$ and elements of $I_\sigma$ have at most $n$ non-zero coordinates and equality holds if and only if $\sigma$ is maximal. 
Moreover, there is a bijection
\begin{equation}
\label{equation:decomposition-of-cochar-cones-of-Sigma}
\mathcal X_* ( U ) \longrightarrow \sqcup_{\sigma \in \Sigma} I_\sigma
\end{equation}
sending $\bm 0 \in \mathcal X_* ( U )$ to $\bm 0 \in I_{\{ \bm 0 \}}$
and any $\bm m$ to its unique representation $\sum_\alpha n_\alpha \rho_\alpha$ with $(n_\alpha ) \in I_\sigma$ for $\sigma$ containing $\bm m$ in its relative interior. 
   
\begin{notation}\label{notation:I-sigma}
In the rest of the paper, for every $\sigma \in \Sigma$ we identify $I_\sigma$ with its image in $\mathcal X_* ( U )$ 
(as commutative semi-groups)
via \eqref{equation:decomposition-of-cochar-cones-of-Sigma}. Therefore we are in the situation described in \cref{remark:mixed-sym-prod-with-extra-structure}.
\end{notation}

This convention will allows us to make use of the mixed variant of motivic Euler products from \cref{definition-mixed-motivic-euler-product}, in particular when defining our motivic height function.


\subsection{Local and global degrees}\label{subsect:multiheights}

Before introducing the motivic height zeta function of the toric variety associated to $\Sigma$,
we discuss the construction of the height at the level of rational and adelic points. 
This construction guides us towards the parameterisation we use next.

\subsubsection{Degree on $\GG_{m}$}
Let $\CCC$ be a smooth projective geometrically irreducible curve over $k$, and let $F = k(\CCC)$ its function field. For every closed point $v\in \CCC$, the completion $F_v$ is a valued field, and the corresponding valuation $v$ induces a local degree map:
\[
\begin{array}{rcl} \deg_{F,v}:\GG_{m}(F_v)\to \ZZ\\
x_v\mapsto v(x_v)\end{array}
\]
Combining these for all places $v$, we get the global degree
\[
\divv_F :
\left \{ 
\begin{array}{rll}
  \mathbf G_m ( \mathbb A_F ) & \longrightarrow & \Div ( \mathscr C ) \\
    ( x_v ) & \longmapsto & \sum_{v\in | \CCC |} \deg_{F,v} ( x_v ) [ v ] 
\end{array}
\right. 
\]
whose fiber above a $D = \sum_v d_v [v]$ is nothing else but $\prod_v t_v^{d_v} \mathcal O_v^\times$.

\subsubsection{Degree on an algebraic torus}
Let $U$ be a split algebraic torus of dimension $n$. One can compose $\divv_F$ with any $\bm m \in \mathcal X^* ( U ) = \Hom ( U , \mathbf G_m )$. 
This provides a new morphism
\[
\divv_U :
\left \{ 
\begin{array}{rll}
  U ( \mathbb A_F ) & \longrightarrow & 
  \Hom_\mathrm{gp} ( \mathcal X^* ( U ) , \Div ( \mathscr C ) ) = \oplus_{v\in | \mathscr C |} \mathcal X_* ( U )
  \\
    ( x_v ) & \longmapsto & 
    \left [
    \bm m \mapsto 
   \divv_F 
   \left ( 
        \bm m ( ( x_v ) ) 
    \right )  
    \right ] . 
\end{array}
\right. 
\]
By composing with the Abel-Jacobi morphism $\Div ( \CCC ) \to \Pic ( \CCC )$, 
we also get a map
\[
\overline{\divv_U}
:
\left \{ 
\begin{array}{rll}
  U ( \mathbb A_F ) & \longrightarrow & 
  \Hom_\mathrm{gp} ( \mathcal X^* ( U ) , \Pic ( \mathscr C ) ) 
  \\
    ( x_v ) & \longmapsto & 
    \left [
    \bm m \mapsto 
   [\divv_F 
   \left ( 
        \bm m ( ( x_v ) ) 
    \right ) ] 
    \right ] . 
\end{array}
\right. 
\]
The fiber of $\overline{\divv_U}
$ 
above $0\in \Hom_\mathrm{gp} ( \mathcal X^* ( U ) , \Pic ( \mathscr C ) )$ is exactly the set $U(F)$. 
Composing $\divv_U$ with the degree morphism
\[
\left \{
\begin{array}{rll}
    \oplus_{v\in | \mathscr C |} \mathcal X_* ( U ) & \longrightarrow  & \mathcal X_* ( U ) \\
    \sum_{v\in  | \mathscr C |} \bm m_v [v] & \longmapsto & \sum_{v\in | \mathscr C |} \bm m_v \deg ( \kappa ( v ) : k ),
\end{array}
\right. 
\]
we also obtain a degree morphism $\degg_U: U(\mathbb{A}_F)\to \mathcal{X}_*(U).$
The logarithmic multi-height of a point $x \in U ( \mathbb A_F )$
is defined to be 
\[
    h ( x , \cdot ) 
     = 
    \langle \cdot, \degg_U ( x ) \rangle_{\Sigma} \in \PL ( \Sigma )^\vee . 
\]

\subsubsection{Parameterisation of $\oplus_{v\in | \mathscr C |} \mathcal X_* ( U )$}

Using the bijection given by \eqref{equation:decomposition-of-cochar-cones-of-Sigma} and \cref{notation:I-sigma},
we can parameterise elements of 
\[
\Hom ( \mathcal X^* ( U ) , \Div ( \mathscr C ) )
\simeq 
\oplus_{v\in | \mathscr C |} \mathcal X_* ( U )
\]
by the family of mixed symmetric products of the form
\[
\Sym_{/k}^{ ( \pi_\sigma )_{\sigma \neq \bm 0} } ( \mathscr C )_* 
\]
where for all $\sigma \neq \bm 0$, $\pi_\sigma$ is a partition (which can be trivial) indexed by $I_\sigma$. 
Similarly,
the algebro-geometric analogue of
\[
\Hom ( \mathcal X^* ( U ) , \Pic ( \mathscr C ) )
\]
is 
\[
\HHom_{k-\text{gp}}  ( \underline{\mathcal X^* ( U )}_k , \PPic ( \mathscr C ) )
\]
which is isomorphic to a product of $n$ copies of $\PPic ( \mathscr C ) $. 
Using the splitting 
\[
\PPic ( \mathscr C ) \simeq \PPic^0 ( \mathscr C ) \oplus \underline{\mathbf Z}_k 
\]
given by $\mathscr O_\mathscr C ( \mathfrak D_1 )$,
we see that a point of this latter group scheme is determined by an element of $\mathcal X_* ( U )$ (its tuple of degrees) and an $n$-tuple of linear classes in $\PPic^0 ( \mathscr C )$. Finally, the morphism 
\[
\Hom ( \mathcal X^* ( U ) , \Div ( \mathscr C ) )
\longrightarrow
\Hom ( \mathcal X^* ( U ) , \Pic ( \mathscr C ) )
\]
also admits an algebro-geometric incarnation: 
\begin{equation} \label{eq:Abel-Jacobi-morphism-for-n-tuples-of-divisors}
\begin{array}{rll}
  \Sym_{/k}^{ ( \pi_\sigma )_{\sigma \neq \bm 0} } ( \mathscr C )_*
& \longrightarrow & \PPic ( \mathscr C )^n 
\simeq \HHom_{k-\text{gp}}  ( \underline{\mathcal X^* ( U )}_k , \PPic ( \mathscr C ) )
\\
    ( E_\sigma = \sum_v \bm m_{\sigma,v} [v] ) & \longmapsto &  [ \sum_\sigma E_\sigma ]
\end{array} 
\end{equation}
whose fibre above $\bm 0$ encodes $n$-tuples of principal divisors. 

\subsubsection{Effective degrees}
We also have (abstract) group morphisms 
\[
\gamma^\vee : 
\Hom ( \PL ( \Sigma ) , \Div ( \CCC ) )
\simeq \oplus_{v\in | \mathscr C |} \mathcal \PL ( \Sigma )^\vee 
\longrightarrow
\Hom ( \mathcal X^* ( U ) ,  \Div ( \CCC ) )
\simeq \oplus_{v\in | \mathscr C |} \mathcal X_* ( U )
\]
and 
\[
\gamma^\vee : 
\Hom ( \PL ( \Sigma ) , \Pic ( \CCC ) )
\longrightarrow
\Hom ( \mathcal X^* ( U ) ,  \Pic ( \CCC ) )
\]
induced by the composition by the group morphism $\gamma : \mathcal X^* ( U ) \to \PL ( \Sigma ) $ 
from \eqref{equ:fundamental-exact-sequence-toric}. 
In our algebro-geometric setting, elements of $\Hom ( \PL ( \Sigma ) , \Div ( \CCC ) )$ can be parameterised by symmetric products with partitions indexed by $\PL ( \Sigma )^\vee - \{ \bm 0 \}$. In practice these partitions 
will always be supported on 
\[
\PL ( \Sigma )_+^\vee = \PL ( \Sigma )^\vee \cap \mathbf N^{\Sigma ( 1 )}
\]
since the local intersection degrees of a morphism $\CCC \to X$ with the effective divisors $\mathfrak D_\alpha$ will always be non-negative. 
Applying $\gamma$ to the two different index sets $\PL ( \Sigma )^\vee - \{ \bm 0 \}$ and $\mathcal X^* ( U ) - \{ \bm 0 \}$ 
gives us a collection of piecewisely defined morphisms 
\begin{equation}
\label{equation:gamma-vee-for-Sym-definition}
\gamma^\vee :
 \Sym_{/k}^{ \bm d } ( \mathscr C ) 
 := \coprod_{\varpi \vdash \bm d  } \Sym^{\varpi}_{/k} ( \CCC )_*
 \longrightarrow 
  \Sym_{/k}^{ \gamma^\vee ( \bm d ) } ( \mathscr C ) := \coprod_{\pi \vdash \gamma^\vee ( \bm d ) } \Sym^{\pi}_{/k} ( \CCC )_*
\end{equation}
for any $\bm d \in \PL ( \Sigma )^\vee$. Again, for our application our partitions for the left hand side will be indexed by $ \PL ( \Sigma )^\vee_+ - \{ \bm 0 \}$ and therefore in that case $\bm d \in \PL ( \Sigma )^\vee_+$.
Eventually \eqref{equation:gamma-vee-for-Sym-definition} can be refined once more by pulling it back to the mixed products of the form $ \Sym_{/k}^{ ( \pi_\sigma )_{\sigma \neq \bm 0} } ( \mathscr C )_*$. 
Moreover these morphisms are compatible with their counterpart at the level of linear classes 
\begin{equation}
\label{equation:gamma-vee-for-Pic-definition}
\gamma^\vee : 
\HHom_{k-\text{gp}}  ( \underline{\PL ( \Sigma ) }_k , \PPic ( \mathscr C ) )
\longrightarrow 
\HHom_{k-\text{gp}}  ( \underline{\mathcal X^* ( U )}_k , \PPic ( \mathscr C ) ) .
\end{equation}
The fibre of \eqref{equation:gamma-vee-for-Pic-definition} above any point of $\HHom_{k-\text{gp}}  ( \underline{\mathcal X^* ( U )}_k , \PPic ( \mathscr C ) )$
is isomorphic to a product of $r = |\Sigma ( 1 )| - n$ copies of $\PPic^0 ( \CCC )$. 
Indeed, it is given by the subgroup of linear classes $(L_\alpha)_{\alpha \in \Sigma ( 1 )}$
satisfying 
\[
\otimes_\alpha L_\alpha^{n_\alpha} = \mathscr O_\CCC
\]
for every $(n_\alpha )\in \mathcal X^* ( U )$,
which means imposing $\rk \mathcal X^* ( U ) = n$ conditions. 

\begin{remark}
Since the fan $\Sigma$ is complete, the morphism $\gamma$ is already surjective when the index set is $ \PL ( \Sigma )^\vee_+ - \{ \bm 0 \}$.
\end{remark}

\begin{remark}
Two $\bm d $ and $\bm d '$ have same image by $\gamma^\vee$ if and only if there exists $\delta \in \Pic ( X )^\vee$ such that 
$\bm d ' = \bm d + \delta$. 
\end{remark}


\subsection{Motivic height zeta function}

\begin{definition}
    The motivic height zeta function is the series
    \[
     \zeta_H^\mathrm{mot} ( \mathbf T ) 
        =
    \sum_{\delta \in \Pic ( X_\Sigma )^\vee}
    \left [
    \HHom_k^{\delta} ( \mathscr C , X_\Sigma )_{U} 
    \right ]
    \mathbf T^{\delta} . 
    \]
\end{definition}
Using the dual exact sequence
\[
    0
\to 
    \Pic(X_{\Sigma})^{\vee}
\to 
    \mathrm{PL}(\Sigma)^{\vee}
\overset{\gamma^\vee}{\to} 
    \mathcal{X}_{*}(U)
\to 
    0
\]
of \eqref{equ:fundamental-exact-sequence-toric},
we can see any $\delta \in \Pic (X_\Sigma)^\vee$ as a tuple of integers $\bm d \in \PL ( \Sigma )^\vee$ via the identification $\PL ( \Sigma ) \simeq \oplus_{\alpha \in \Sigma ( 1 )} \mathbf Z \mathfrak D_\alpha$.  
Since the $\mathfrak{D}_\alpha$ are effective divisors, the image of $\degg ( f ) \in \Pic (X_\Sigma)^\vee$ lies in  $\PL ( \Sigma )_+^\vee$ for every $f : \CCC \to X$. 
More precisely, for every ${\bm d} \in \Pic (X_\Sigma)^\vee \cap \PL ( \Sigma )^\vee_+ $
there is a constructible morphism 
\[
 \HHom_k^{\bm d} ( \mathscr C , X_\Sigma )_{U} 
 \longrightarrow
 \Sym_{/k}^{\bm d} ( \CCC )
\]
sending $f$ to the tuple of divisors $(f^*\mathfrak{D}_\alpha)_{\alpha \in \Sigma ( 1 )}$. 
We can rewrite $ \zeta_H^\mathrm{mot} ( \mathbf T ) $ as 
\[
 \zeta_H^\mathrm{mot} ( \mathbf T ) 
 =
 \sum_{\bm d \in \PL ( \Sigma )_+^\vee \cap \Pic (X_\Sigma)^\vee}
 \sum_{
   \bm E \in \Sym_{/k}^{\bm d} ( \CCC ) }
 \left [
    f \in  \HHom_k^{\bm d} ( \mathscr C , X_\Sigma )_{U}  \mid 
    (f^*\mathfrak{D}_\alpha)_{\alpha \in \Sigma ( 1 )} = \bm E
 \right ] \mathbf T^{\bm d}. 
\]
Using the decomposition of $\mathcal X_* ( U )$ given by \eqref{equation:decomposition-of-cochar-cones-of-Sigma}, the previous expression can be refined once more thanks to the following remark: for every closed point $v\in \mathscr C$, if the $\Sigma ( 1 )$-tuple of local intersection degrees with the $\mathfrak{D}_\alpha$ is non-trivial,
then it belongs to the relative interior of a unique cone $\sigma \in \Sigma - \{ \bm 0 \}$.
Concretely, we compose with the constructible morphism $\gamma^\vee$ from \eqref{equation:gamma-vee-for-Sym-definition} 
to get 
\[
 \zeta_H^\mathrm{mot} ( \mathbf T ) 
 =
 \sum_{\bm d \in \PL ( \Sigma )_+^\vee \cap \Pic (X_\Sigma)^\vee} 
   \Sym_{/k}^{\gamma^\vee ( \bm d ) } ( \CCC ) 
  \left [
    \HHom_k^{\bm d} ( \mathscr C , X_\Sigma )_{U} \to \Sym_{/k}^{\gamma^\vee ( \bm d ) } ( \CCC )_* 
 \right ] 
 \mathbf T^{\bm d}.  
\]
Of course, since we are summing over $\bm d \in \PL ( \Sigma )_+^\vee \cap \Pic (X_\Sigma)^\vee$, we have $\gamma^\vee ( \bm d ) = \bm 0$ for every term appearing in the sum, but this will not be the case on the Fourier side. It may also be convenient to set $ \HHom_k^{\bm d} ( \mathscr C , X_\Sigma )_{U} := \varnothing $ whenever $\bm d \notin \Pic (X_\Sigma)^\vee$ 
(equivalently we could also have defined these moduli spaces starting from any $\bm d$). 
\begin{lemma}
The morphism $ \HHom_k^{\bm d} ( \mathscr C , X_\Sigma )_{U} \to \Sym_{/k}^{\gamma^\vee ( \bm d ) } ( \CCC )$ is a piecewise trivial fibration in $U$.  
\end{lemma}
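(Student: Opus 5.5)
The plan is to use the classical description of morphisms from a curve to a smooth toric variety in terms of Cox data. By Cox's functorial description (Cox, "The functor of a smooth toric variety"), a morphism $f:\CCC\to X_\Sigma$ sending the generic point into $U$ is the same as a $\Sigma$-collection: line bundles $(L_\alpha)_{\alpha\in\Sigma(1)}$ on $\CCC$, sections $s_\alpha\in H^0(\CCC,L_\alpha)$, and trivialisations $c_{\bm m}:\otimes_\alpha L_\alpha^{\langle \bm m,\rho_\alpha\rangle}\simeq \OO_\CCC$ for $\bm m\in\mathcal X^*(U)$, compatible with tensor products. This data is subject to a non-degeneracy condition coming from $\Sigma$, and is taken up to the natural isomorphisms. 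Here $f^*\mathfrak D_\alpha = \divv(s_\alpha)$, and the condition of landing generically in $U$ says that every $s_\alpha$ is nonzero. Given a point $\bm E=(E_\alpha)_\alpha$ of $\Sym^{\bm d}_{/k}(\CCC)$, we may take $L_\alpha=\OO_\CCC(E_\alpha)$ with $s_\alpha$ its canonical section. The data $(L_\alpha,s_\alpha)$ is then determined by $\bm E$ up to the action of $\GG_m^{\Sigma(1)}$ by scaling sections.

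The first key step is to identify fibres. Above $\bm E$, the fibre is nonempty if and only if $\otimes_\alpha \OO(E_\alpha)^{\langle\bm m,\rho_\alpha\rangle}$ is trivial for all $\bm m$. This says that the $n$-tuple of divisors $\gamma^\vee(\bm E)$ is principal, i.e. $\bm E$ lies over $\PDDiv$ (the fibre above $0$ of \eqref{eq:Abel-Jacobi-morphism-for-n-tuples-of-divisors}). The non-degeneracy condition holds automatically, because the divisors $E_\alpha$ assemble into a point of $\Sym^{(\pi_\sigma)}$, so that at each point the rays with positive multiplicity lie in a common cone. When the fibre is nonempty, the choices of trivialisations $c_{\bm m}$ (equivalently, of rational functions with the prescribed divisors), modulo the $\GG_m^{\Sigma(1)}$-rescalings, form a torsor under $\Hom(\mathcal X^*(U),\GG_m)=U$. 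Concretely, once $\bm E$ is fixed, $f|_U$ is given by $n$ rational functions $g_1,\dots,g_n$ with $\divv(g_i)$ prescribed by $\gamma^\vee(\bm E)$, and each $g_i$ is unique up to a scalar in $k(\bm E)^\times$. So the fibre over $\bm E$ is a $U_{\kappa(\bm E)}$-torsor, in fact trivial over $\kappa(\bm E)$. The direct way to see this: the rational map $\CCC\dashrightarrow U$, $x\mapsto (g_i(x))_i$, extends uniquely to $\CCC\to X_\Sigma$ by properness, and its pullback divisors are $\bm E$ by the toric description of local intersection multiplicities.

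The main obstacle is to make this work in families, i.e. to get piecewise triviality rather than merely fibrewise triviality. The plan is to stratify the locus of $\Sym^{\gamma^\vee(\bm d)}_{/k}(\CCC)_*$ lying over $\PDDiv$ into locally closed pieces $Z$ on which a family of rational functions with the prescribed divisors can be chosen. To do this, write $D_i=\sum_j \bB_{i,j}v_{i,j}$ as $D_i^+-D_i^-$. On $Z$, take the relative $H^0(\CCC,\OO(D_i^-))$; after further stratification (semicontinuity) this is a vector bundle over $Z$. Inside it, the locus of sections whose divisor is exactly $D_i^+$ is a $\GG_m$-torsor over $Z$, since it is an étale-locally trivial torsor with nonempty fibres. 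By Hilbert 90 for $\GG_m$ (as in the proposition on symmetric products and Hilbert 90), it is Zariski-locally trivial, so after refining the stratification it admits a section $g_i$. The resulting family $(g_i)$ defines a morphism $Z\times U\to \HHom^{\bm d}_k(\CCC,X_\Sigma)_U$, $(z,u)\mapsto u\cdot g(z)$, over $Z$. This map is a bijection on points by the fibre analysis above. To upgrade it to an isomorphism on each stratum, use that both sides are reduced after passing to reduced structures, and that in characteristic zero a bijective morphism of varieties is piecewise an isomorphism. This yields the claimed piecewise trivial fibration with fibre $U$.
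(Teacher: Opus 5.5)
Your proposal is correct. Its core is exactly the paper's proof, which consists only of your ``concretely'' step: over a $k'$-point, the fibre is the set of $n$-tuples of non-zero rational functions on $\CCC_{k'}$ with the prescribed divisors, and any two of these differ by an element of $U(k')$.

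The paper stops there. It implicitly relies on the standard criterion that a morphism whose fibre over every point $y$ is isomorphic to $U_{\kappa(y)}$ is a piecewise trivial fibration (spread the isomorphism out from the generic point of each stratum, then use noetherian induction). It uses nothing beyond the valuative criterion and the toric description of local intersection multiplicities.

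You instead build families of rational functions $(g_i)$ over strata $Z$ and a comparison morphism $Z\times U\to\HHom^{\bm d}_k(\CCC,X_\Sigma)_U$. This is where Cox's description genuinely earns its keep. Applied to the $\Sigma$-collection $(\OO(E_\alpha),s_\alpha,c_{\bm m})$ on $\CCC\times Z$, with $c_{\bm m}$ induced by $\prod_i g_i^{m_i}$, it shows that the family of rational maps extends to a morphism on $\CCC\times Z$. It also makes the fibre identification scheme-theoretic rather than only on points, which the paper leaves tacit.

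The price is a few loose ends, none of them serious.
\begin{itemize}
\item The ``étale-locally trivial torsor'' is asserted, not shown. On reduced $Z$, the sections of $\OO(D_i^{-})$ with divisor exactly $D_i^{+}$ are the complement of the zero section of the line bundle $p_*\OO(D_i^{-}-D_i^{+})$, where $p:\CCC\times Z\to Z$. So Zariski-local triviality is immediate and Hilbert 90 is not needed.
\item ``Bijective implies piecewise isomorphism'' only gives a piecewise isomorphism of total spaces. Writing $\mathcal H_Z$ for the part of the Hom scheme over $Z$, this is the relative identity $[\mathcal H_Z]=[U][Z]$ in $\KVar{Z}$, which is all the next lemma uses. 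To stratify the base itself, use $U$-equivariance: over the generic point of a stratum, the fibre (with its reduced structure) is a single free $U$-orbit with a rational point. In characteristic zero the orbit map is then an isomorphism, and this spreads out.
\item ``The $c_{\bm m}$ modulo $\GG_m^{\Sigma(1)}$-rescalings form a $U$-torsor'' is misphrased. With $s_\alpha$ fixed as the canonical sections there are no residual automorphisms, and the compatible $c_{\bm m}$ themselves form the torsor. Quotienting by $\GG_m^{\Sigma(1)}$, which surjects onto $U$, would collapse the torsor to a point.
\end{itemize}
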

\begin{proof}
For any extension $k ' / k $,
the fibre above a $k'$ point of $\Sym_{/k}^{\gamma^\vee ( \bm d ) } ( \CCC )$ it the set of $F'$ points of $U$, where $F' = k' ( \mathscr C \times_k k ')$, that is to say $n$-tuples of non-zero elements of $F'$ that share the same $n$-tuple of divisors of zeros and poles.
Such elements only differ by a $k'$-point of $U$. 
\end{proof}

\begin{definition}
    For any $( \pi_\sigma )_\sigma$ as before, 
    let 
    \[
    \mathbf 1 : 
\Sym^{(\pi_\sigma)_\sigma}_{/k} ( \CCC )_* \longrightarrow \Sym^{(\pi_\sigma)_\sigma}_{/k} ( \underline{\mathcal{X}_{*}(U)}_{\CCC} )_*
\]
be the morphism sending any
\[
\bm E = \sum_v \bm m_v [v]
\]
to 
\[
\bm E = \sum_v \bm m_v ( \bm m_v , [v] ) . 
\]
\end{definition}
Combining this with \eqref{eq:Abel-Jacobi-morphism-for-n-tuples-of-divisors}
and the summation operator from \cref{subsection:summation_rat_points}, we get the following lemma. 
We may sometimes write $\pi$ for $( \pi_\sigma )_\sigma$ to shorten our notations. 
\begin{lemma} \label{lemma:moduli-space-as-sum-on-principal-div}
For any $\bm d \in \PL ( \Sigma )_+^\vee$ 
the relation
\[ 
\left [
 \HHom_k^{\bm d} ( \mathscr C , X_\Sigma )_{U} 
\right ]
= 
[ U ]_k 
\sum_{ (\pi_\sigma)_\sigma \vdash \gamma^\vee ( \bm d ) }
\sum_{\PDiv^\pi} 
\left [
\mathbf 1 : 
\Sym^{(\pi_\sigma)_\sigma}_{/k} ( \CCC )_* \longrightarrow \Sym^{(\pi_\sigma)_\sigma}_{/k} ( \underline{\mathcal{X}_{*}(U)}_{\CCC} )_*
\right ]
\]
holds in $\mathscr M_{\Sym^{\gamma^\vee ( \bm d )}_{/k} ( \CCC )_*}$.
\end{lemma}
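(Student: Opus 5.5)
The plan is to compare both sides fibrewise over $\Sym^{\gamma^\vee(\bm d)}_{/k}(\CCC)_*$, using \cref{lemma-motivic-functions-are-defined-on-points}, after first decomposing the base into the locally closed strata $\Sym^{(\pi_\sigma)_\sigma}_{/k}(\CCC)_*$ indexed by $(\pi_\sigma)_\sigma \vdash \gamma^\vee(\bm d)$. By cut-and-paste, the class of $\HHom_k^{\bm d}(\CCC,X_\Sigma)_U$ decomposes as the sum of its restrictions to these strata. So it suffices to prove, for each fixed $\pi=(\pi_\sigma)_\sigma$, that the restriction of $[\HHom_k^{\bm d}(\CCC,X_\Sigma)_U]$ to $\Sym^{\pi}_{/k}(\CCC)_*$ equals $[U]_k\sum_{\PDiv^\pi}[\mathbf 1]$ in $\MMM_{\Sym^{\pi}_{/k}(\CCC)_*}$.

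Next I unwind the right-hand side. By \cref{def:summation-over-PDiv}, $\sum_{\PDiv^\pi}[\mathbf 1]$ is the class of the pullback of $\mathbf 1 : \Sym^\pi_{/k}(\CCC)_*\to \Sym^\pi_{/k}(\underline{\mathcal X_*(U)}_\CCC)_*$ to $\PDDiv^\pi(\CCC)$, viewed over $\Sym^\pi_{/k}(\CCC)_*$. Since $\mathbf 1$ is a section of the projection (the $\mathcal X_*(U)$-label at $v$ is just the multiplicity $\bm m_v$ recorded by $\bm E$), this pullback is the locally closed subscheme $Z_\pi\subset \Sym^\pi_{/k}(\CCC)_*$ of those $\bm E=\sum_v \bm m_v[v]$ whose image under the Abel–Jacobi morphism \eqref{eq:Abel-Jacobi-morphism-for-n-tuples-of-divisors} (equivalently, the morphism of \cref{prop:map-to-pic} composed with $\mathbf 1$) is $0$. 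In other words, $Z_\pi$ consists of those $\bm E$ for which $\bm a\mapsto \sum_v\langle \bm a,\bm m_v\rangle v$ is principal for every $\bm a\in\mathcal X^*(U)$. The right-hand side is therefore $[U]_k[Z_\pi\hookrightarrow \Sym^\pi_{/k}(\CCC)_*]$.

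For the left-hand side, I use the lemma just proved: $\HHom_k^{\bm d}(\CCC,X_\Sigma)_U\to\Sym^{\gamma^\vee(\bm d)}_{/k}(\CCC)$ is a piecewise trivial fibration with fibre $U$ over its image. It then remains to identify this image with $\bigsqcup_\pi Z_\pi$. I will check this on $k'$-points, with $F'=k'(\CCC_{k'})$ (constructible sets are determined by their points, and \cref{lemma-motivic-functions-are-defined-on-points} handles the class). A morphism $f:\CCC\to X_\Sigma$ generically in $U$ is a point $x\in U(F')$, i.e.\ an $n$-tuple of nonzero rational functions. The tuple $(f^*\mathfrak D_\alpha)_\alpha$, pushed by $\gamma^\vee$ via the bijection \eqref{equation:decomposition-of-cochar-cones-of-Sigma}, is exactly $\divv_U(x)=\sum_v \bm m_v[v]$, and $\bm a\mapsto\divv_F(\bm a(x))$ is principal. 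Conversely, if $\bm E$ lies in $Z_\pi$, then choosing a basis of $\mathcal X^*(U)$ gives functions $x_1,\dots,x_n$ with the prescribed divisors. The corresponding point of $U(F')$ extends to $\CCC\to X_\Sigma$ by properness, and its intersection multiplicities with the $\mathfrak D_\alpha$ at $v$ are the coordinates of $\bm m_v$ in the cone containing it. This is the standard toric description: locally at $v$, $x$ lies in $t^{\bm m_v}U(\OO_v)$, and the limit point lies in the orbit of the cone containing $\bm m_v$ in its relative interior. In particular, the resulting multidegree is $\bm d$, given that $\gamma^\vee(\bm d)$ is the recorded datum.

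The main obstacle is this last identification: making sure the multidegree $\bm d\in\PL(\Sigma)^\vee_+$ is recovered from $\bm E$, and not merely $\gamma^\vee(\bm d)$. This holds because the $\mathfrak D_\alpha$-degrees are the sums over $v$ of the coordinates $n_{\alpha}$ of $\bm m_v$ in $I_\sigma$, so $\bm d$ is a function of the stratum and point. The bookkeeping here should match the index sets $I_\sigma$ of \cref{notation:I-sigma}, and I would write it out carefully. The rest is formal from the previous lemma and cut-and-paste.
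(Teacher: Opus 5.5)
Your route is the paper's. You stratify by $(\pi_\sigma)_\sigma$, identify $\sum_{\PDiv^\pi}[\mathbf 1]$ with the class of the locus $Z_\pi$ where the Abel--Jacobi map vanishes (the paper's ``$\sum_{\PDiv^\pi}\mathbf 1_{\bm E}$''), and conclude with the preceding lemma that $\HHom^{\bm d}_k(\CCC,X_\Sigma)_U$ is a piecewise trivial $U$-fibration over its image. Your pointwise description of that image, via properness and the local toric picture, is more detailed than the paper's and is correct.

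The step that fails as written is the recovery of $\bm d$. The sentence ``the resulting multidegree is $\bm d$, given that $\gamma^\vee(\bm d)$ is the recorded datum'' is false, because $\ker\gamma^\vee=\Pic(X_\Sigma)^\vee\neq 0$. Mixed partitions $(\pi_\sigma)_\sigma\vdash\gamma^\vee(\bm d)$, with sums taken in $\mathcal X_*(U)$ as in \cref{notation:I-sigma}, have $\NN^{\Sigma(1)}$-totals $\bm d'$ running over all of $(\bm d+\Pic(X_\Sigma)^\vee)\cap\PL(\Sigma)^\vee_+$. Your own correct remark that the total is a function of the stratum then shows what happens on a stratum with total $\bm d'\neq\bm d$: the left-hand side restricts to $0$, whereas $[U]_k[Z_\pi]$ is the restriction of $[\HHom^{\bm d'}_k(\CCC,X_\Sigma)_U]$, which is usually non-zero. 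For instance, take $X_\Sigma=\CCC=\PP^1$ and $\bm d=(1,1)$. The partition of $0\in\ZZ$ with one point labelled $2$ and one labelled $-2$ has $Z_\pi=(\PP^1\times\PP^1)\setminus\Delta$, which is the image of degree-two maps. So the image of $\HHom^{\bm d}_k(\CCC,X_\Sigma)_U$ is not $\bigsqcup_\pi Z_\pi$ over the index set you wrote.

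The repair is to make the index set explicit. The sum must run over mixed partitions whose total in $\NN^{\Sigma(1)}$ is $\bm d$ itself, i.e. over the mixed strata of $\Sym^{\bm d}_{/k}(\CCC)$ transported by $\gamma^\vee$. On each such stratum $\gamma^\vee$ is an isomorphism, because $\bigsqcup_{\sigma\neq\bm 0}I_\sigma\to\mathcal X_*(U)\setminus\{\bm 0\}$ is bijective. This is also how the lemma is used afterwards: the coefficient of $\mathbf T^{\bm d}$ in $H(\cdot,\mathbf T)$ only involves such partitions. The statement's literal index set is too large, and the paper's proof does not address this point either. With this reading your argument goes through unchanged. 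On such a stratum, every morphism whose divisor tuple lies there has multidegree $\sum_v\deg(v)\,\bm n_v=\bm d$, where $\bm n_v\in I_\sigma$ is the local intersection tuple at $v$.
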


\begin{proof}
Recall that we also have a morphism 
\[
\Sym^{(\pi_\sigma)_\sigma}_{/k} ( \underline{\mathcal{X}_{*}(U)}_{\CCC} )_*
\longrightarrow
\Sym^{(\pi_\sigma)_\sigma}_{/k} ( \CCC )_*  
\] 
which can be composed with 
\[
\mathbf 1 : \Sym^{(\pi_\sigma)_\sigma}_{/k} ( \CCC )_* \longrightarrow \Sym^{(\pi_\sigma)_\sigma}_{/k} ( \underline{\mathcal{X}_{*}(U)}_{\CCC} )_*
\]
to get 
\[
\bm \varepsilon : \Sym^{(\pi_\sigma)_\sigma}_{/k} ( \underline{\mathcal{X}_{*}(U)}_{\CCC} )_* \to \Sym^{(\pi_\sigma)_\sigma}_{/k} ( \underline{\mathcal{X}_{*}(U)}_{\CCC} )_* .
\]
Now we can interpret $\Sym^{(\pi_\sigma)_\sigma}_{/k} ( \CCC )_* \longrightarrow \Sym^{(\pi_\sigma)_\sigma}_{/k} ( \underline{\mathcal{X}_{*}(U)}_{\CCC} )_*$ as the motivic function sending $\bm D \in \Sym^{(\pi_\sigma)_\sigma}_{/k} ( \underline{\mathcal{X}_{*}(U)}_{\CCC} )_*$ 
to one if $\bm \varepsilon ( \bm D ) = \bm D $ and zero otherwise. 
Relatively to a divisor $\bm E \in \Sym^{(\pi_\sigma)_\sigma}_{/k} ( \CCC )_*$, this gives the indicator function $\mathbf 1_{\bm E}$ of $\bm E$ on the set of divisors having support contained in the one of $\bm E$, hence the choice of notation:
\[
\mathbf 1 
= 
( \mathbf 1_{\bm E} )_{\bm E \in \Sym^{(\pi_\sigma)_\sigma}_{/k} ( \CCC )_*} . 
\]
Now 
relatively to $\bm E \in \Sym^{(\pi_\sigma)_\sigma}_{/k} ( \CCC )_*$ 
it makes sense to rewrite the fibre above $\bm 0$ of the Abel-Jacobi morphism \eqref{eq:Abel-Jacobi-morphism-for-n-tuples-of-divisors}
as 
\[
\sum_{\PDiv^\pi} \mathbf 1_{\bm E} 
\]
hence the lemma. 
\end{proof}


\subsection{Adelic height zeta function as a motivic Euler product}

Recall the notation for characteristic functions given in \cref{notation:characteristic_functions}.

\begin{definition}
    The motivic adelic height function is the motivic Euler product 
    \[
    H ( \cdot  , \mathbf T )
    =
    \prod_{v\in \mathscr C }
    \left ( 
    \sum_{\bm m_v \in \mathcal X_* ( U ) }
    \mathbf 1_{t^{\bm m_v} U ( \mathcal O )}
    \mathbf T^{\langle \scdot , \bm m_v  \rangle_{\Sigma}}
    \right ) \in \MMM_{\Sym^{\bullet}(\underline{\mathcal{X}_{*}(U)}_{\CCC})}.
    \]
    which, using the definition of the pairing $\langle \cdot,\cdot\rangle_{\Sigma}$ becomes 
    \[
    H ( \cdot  , \mathbf T ) = \left(1+ \sum_{\sigma\in \Sigma\setminus \{\mathbf{0}\}} \sum_{\mathbf{m}_v\in \sigma^{\circ}}\mathbf{1}_{t^\mathbf{m}_vU ( \mathcal O ) } \prod_{\alpha\in \sigma(1)}T_{\alpha}^{\langle\mathfrak{D}_{\alpha},\mathbf{m}_v\rangle_{\Sigma}} \right)
    \]
\end{definition}
 
It is really important for what comes next to understand $H ( \cdot , \mathbf T )$
as a collection of families of motivic functions, as in \cref{example:families-of-characteristic-functions}. 
We can then rewrite \cref{lemma:moduli-space-as-sum-on-principal-div} in a compact way as follows. 

\begin{proposition}
    The motivic height zeta function may be rewritten as 
    \begin{equation}
        \zeta_H^\mathrm{mot} ( \mathbf T ) 
        =
        [U]_k 
        \sum_{\PDiv ^n}
        H^\mathrm{mot} ( \cdot , \mathbf T ) . 
    \end{equation}
\end{proposition}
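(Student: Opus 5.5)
The plan is to compare both sides coefficient by coefficient in the multi-graded incidence algebra, and to reduce to \cref{lemma:moduli-space-as-sum-on-principal-div}. First expand the motivic Euler product $H(\cdot,\mathbf T)$ using \cref{definition-mixed-motivic-euler-product}, in the refined setting of \cref{remark:mixed-sym-prod-with-extra-structure} with index sets $I_\sigma$ for $\sigma\in\Sigma\setminus\{\bm 0\}$ (\cref{notation:I-sigma}). Each local factor is $1+\sum_{\sigma\neq\bm 0}\sum_{\bm m\in I_\sigma}\mathbf 1_{t^{\bm m}U(\OO)}\mathbf T^{\langle\cdot,\bm m\rangle_\Sigma}$. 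The coefficient attached to a tuple of partitions $(\pi_\sigma)_\sigma$ is therefore the mixed symmetric product
\[
\Sym^{(\pi_\sigma)_\sigma}_{\CCC/k}\bigl((\mathbf 1_{t^{\bm m}U(\OO)})_{\bm m}\bigr)_*\in \MMM_{\Sym^{(\pi_\sigma)_\sigma}_{/k}(\underline{\mathcal X_*(U)}_\CCC)_*}.
\]
Its monomial is $\mathbf T^{\bm d}$, where $\bm d\in\PL(\Sigma)^\vee_+$ collects $\sum_{\bm m}n_{\bm m}\langle\mathfrak D_\alpha,\bm m\rangle_\Sigma$.

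The first step is to identify this coefficient with the class $[\mathbf 1:\Sym^{(\pi_\sigma)_\sigma}_{/k}(\CCC)_*\to\Sym^{(\pi_\sigma)_\sigma}_{/k}(\underline{\mathcal X_*(U)}_\CCC)_*]$ appearing in \cref{lemma:moduli-space-as-sum-on-principal-div}. By \cref{example:families-of-characteristic-functions} with $\gamma^\vee$ replaced by the tautological map $I_\sigma\hookrightarrow\mathcal X_*(U)$, each $\mathbf 1_{t^{\bm m}U(\OO)}$ is the section $\{\bm m\}\times\CCC$. Its symmetric product is then exactly the section $\bm E=\sum\bm m_v[v]\mapsto\sum(\bm m_v,[v])$ of \cref{subsection:symmetric-products-of-functions}. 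I expect this to be essentially formal: one checks it fibrewise over points of $\Sym^{(\pi_\sigma)_\sigma}_{/k}(\CCC)_*$ using \cref{lemma-motivic-functions-are-defined-on-points}.

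The second step is to apply $\sum_{\PDiv}$ termwise, as in \cref{notation:summ-over-PDiv-Euler-prod}. The operator acts coefficientwise, so the $\mathbf T^{\bm d}$-coefficient of $\sum_{\PDiv}H$ is $\sum_{(\pi_\sigma)\vdash\gamma^\vee(\bm d)}\sum_{\PDiv^\pi}[\mathbf 1]$, pushed forward to the base field. We must check that the partitions contributing to the monomial $\mathbf T^{\bm d}$ are exactly those with $(\pi_\sigma)\vdash\gamma^\vee(\bm d)$ in the sense of \eqref{equation:gamma-vee-for-Sym-definition}. This comes from the bijection \eqref{equation:decomposition-of-cochar-cones-of-Sigma}: a local cocharacter $\bm m$ in $\sigma^\circ$ determines and is determined by its tuple $(\langle\mathfrak D_\alpha,\bm m\rangle_\Sigma)_\alpha\in I_\sigma$.

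Multiplying by $[U]_k$ and invoking \cref{lemma:moduli-space-as-sum-on-principal-div} then gives $[\HHom^{\bm d}_k(\CCC,X_\Sigma)_U]$ for each $\bm d$. Consider the remaining $\bm d\in\PL(\Sigma)^\vee_+$ with $\bm d\notin\Pic(X_\Sigma)^\vee$, i.e.\ $\gamma^\vee(\bm d)\neq\bm 0$. For these, the divisor tuple has nonzero total degree in $\mathcal X_*(U)$, so the fibre $\PDDiv^\pi$ is empty and the coefficient vanishes. This matches the convention $\HHom^{\bm d}=\varnothing$.

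The main obstacle is bookkeeping rather than any deep step. One has to make precise that $\sum_{\PDiv}$ commutes with taking the $\mathbf T^{\bm d}$-coefficient and with pushforward to $\Spec k$. One also has to check that the degree-nonzero pieces die, since principal divisors have degree zero in each component of $\PPic(\CCC)^n$. I would handle the latter via the $\PPic\simeq\PPic^0\oplus\underline{\ZZ}$ splitting.
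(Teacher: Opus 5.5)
Your proposal is correct and follows the paper's route. The paper gives no separate proof: it presents the proposition as a compact rewriting of \cref{lemma:moduli-space-as-sum-on-principal-div}, once the coefficients of $H(\cdot,\mathbf T)$ are read as families of characteristic functions as in \cref{example:families-of-characteristic-functions}, which is exactly your identification of each coefficient with the section $\mathbf 1$ followed by coefficientwise application of $\sum_{\PDiv}$. Your additional checks make explicit what the paper leaves implicit: that the terms with $\gamma^\vee(\bm d)\neq\bm 0$ vanish by the degree argument, and that the index condition $(\pi_\sigma)_\sigma\vdash\gamma^\vee(\bm d)$ must be read as mixed partitions whose total in $\mathbf N^{\Sigma(1)}$ is $\bm d$, via the bijection \eqref{equation:decomposition-of-cochar-cones-of-Sigma}, rather than literally as partitions of $\gamma^\vee(\bm d)$ in $\mathcal X_*(U)$.
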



\subsection{Application of the Poisson formula}

Taking  the Fourier transform of the motivic Euler product $H ( \cdot  , \mathbf T )$ and applying \cref{example:fourier_of_char_function}, we obtain 
\[
( \four H ) ( \cdot , \mathbf T )
=
\prod_{v\in \mathscr C}
\left ( 
    \sum_{\bm m_v \in \mathcal X_* ( U ) }
    \ev (\bm m_v, \cdot )  
        \mathbf T^{\langle \scdot , \bm m_v  \rangle_{\Sigma}}
\right ) \in \CharM{\Sym^{\bullet}(\mathcal X_* (U ) )}{\Sym^{\bullet}(\CCC)}.
\]
in the sense of \cref{def:incidence-algebra-with-characters-localised}.

An application of \cref{thm:mot-mult-Poisson-formula}
to each of the coefficients of  $\zeta_H^{\mathrm{mot}} ( \mathbf T )$, which are
given by \cref{lemma:moduli-space-as-sum-on-principal-div}, 
provides the following compact expression for $\zeta_H^{\mathrm{mot}} ( \mathbf T )$. 

\begin{proposition}The multivariate motivic height zeta function 
$\zeta_H^{\mathrm{mot}} ( \mathbf T )$
can be rewritten
    \begin{equation}
\zeta_H^{\mathrm{mot}} ( \mathbf T )
=
\left (
    \mathbf L - 1 
\right )^n 
\int_{\PDiv^\perp}
( \four H ) ( \bm \chi , \mathbf T ) 
    \mathrm d \bm \chi 
    \in \MMM_{\Sym^{\bullet}(\CCC)}.
\end{equation} 

\end{proposition}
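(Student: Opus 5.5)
The plan is to work coefficient by coefficient and reduce everything to \cref{thm:mot-mult-Poisson-formula}. By the previous proposition (the compact form of \cref{lemma:moduli-space-as-sum-on-principal-div}), we have $\zeta_H^{\mathrm{mot}}(\mathbf T) = [U]_k \sum_{\PDiv} H(\cdot,\mathbf T)$. Since $U\simeq\GG_m^n$ is split, $[U]_k=(\LL-1)^n$. It therefore suffices to prove
\[
\sum_{\PDiv} H(\cdot,\mathbf T) = \int_{\PDiv^\perp}(\four H)(\bm\chi,\mathbf T)\,\mathrm d\bm\chi,
\]
where $\int_{\PDiv^\perp}$ denotes the operator obtained by assembling the $\int_{(\PDiv^\pi)^\perp}$ over all levels $\pi$ in the incidence algebra, in the same way that $\sum_{\PDiv}$ was assembled in \cref{notation:summ-over-PDiv-Euler-prod}.

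\textbf{Step 1.} Expand the motivic Euler product $H(\cdot,\mathbf T)$ using the mixed form of \cref{definition-mixed-motivic-euler-product}, with index sets $I_\sigma$ as in \cref{notation:I-sigma}. For each multi-index, the coefficient of $\mathbf T^{\bm d}$ is a finite sum over tuples of partitions $(\pi_\sigma)_\sigma$ of the classes
\[
\Sym^{(\pi_\sigma)_\sigma}_{\CCC/k}\big((\1_{t^{\bm m}U(\OO)})_{\bm m}\big)_*.
\]
By \cref{def-globalfunctions} and \cref{example:families-of-characteristic-functions}, each of these is a family of global motivic $\KK(U)$-invariant functions of level $\pi$. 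By construction both operators $\sum_{\PDiv}$ and $\int_{\PDiv^\perp}$ act levelwise, so they commute with this expansion.

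\textbf{Step 2.} Identify the Fourier side levelwise. By \cref{example:fourier_of_char_function}, $\four\1_{t^{\bm m}U(\OO)}=\ev(\bm m,\cdot)$. By \cref{remark:compatibility_four_sym}, the level-$\pi$ coefficient of $\four H$, namely $\Sym^\pi\big((\ev(\bm m,\cdot))_{\bm m}\big)_*$, equals $\four$ applied to the level-$\pi$ coefficient of $H$. So the coefficients of the Euler product with characters $(\four H)(\cdot,\mathbf T)$ are exactly the global Fourier transforms appearing in \cref{thm:mot-mult-Poisson-formula}. Applying that theorem to each level $\pi$ gives integrability and the equality
\[
\sum_{\PDiv^\pi} f_\pi = \int_{(\PDiv^\pi)^\perp}\four f_\pi
\]
in $\MMM_{\Sym^\pi_{/k}(\CCC)_*}$. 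Summing over the finitely many $\pi$ contributing to a given coefficient, and pushing forward to $\Sym^{\bullet}(\CCC)$, then yields the claim.

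\textbf{Main obstacle.} The main difficulty is the bookkeeping in the mixed symmetric product setting. Specifically, one must check that the mixed Euler product with characters decomposes into pieces that are each of the form $\Sym^\pi(\cdot)_*$ for a single level $\pi$ over $\mathcal X_*(U)$. The pieces for different cones $\sigma$ combine into one constant group scheme $\underline{\mathcal X_*(U)}_{\CCC}$ at each point, because the supports are disjoint. One must also check that the fibre identification of \cref{remark:fibre_symproduct_and_pdiv} (the groups $H_{\mathsf S}\subset G_{\mathsf S}$) remains valid when each point carries a single $\bm m_v$ drawn from $\sqcup_\sigma I_\sigma$. I would handle this by working fibrewise over $E\in\Sym^{(\pi_\sigma)}_{/k}(\CCC)_*$, using \cref{lemma-motivic-functions-are-defined-on-points}: on the fibre, both sides reduce to the local Poisson formula \cref{thm:local-Poisson-formula} for $H_{\mathsf S}\subset G_{\mathsf S}$. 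This reduction is exactly the argument already carried out in the proof of \cref{thm:mot-mult-Poisson-formula}.
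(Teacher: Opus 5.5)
Your proposal is correct and follows the paper's own argument. You express the coefficients of $\zeta_H^{\mathrm{mot}}$ via \cref{lemma:moduli-space-as-sum-on-principal-div} as $[U]_k=(\LL-1)^n$ times $\sum_{\PDiv^\pi}$ of the level-$\pi$ pieces of $H$, identify the matching pieces of $\four H$ through \cref{example:fourier_of_char_function} and \cref{remark:compatibility_four_sym}, and apply \cref{thm:mot-mult-Poisson-formula} coefficient by coefficient, exactly as the paper does. Your extra bookkeeping (treating the mixed product over the $I_\sigma$ as a single level indexed by $\mathcal X_*(U)\setminus\{0\}$, and assembling the levelwise $\int_{(\PDiv^\pi)^\perp}$ into $\int_{\PDiv^\perp}$) only makes explicit what the paper leaves implicit.
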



\subsection{The local factors of the Fourier transform}

\begin{definition}
    \label{def-QSigma}
    As in \cite[Definition 3.6]{batyrev-tschinkel1998manin-toric} 
    and \cite{bourqui2011MAMS}
    we define a polynomial 
    $Q_\Sigma ( \mathbf X )$ 
    with ${\mathbf X = ( X_\alpha )_{\alpha \in \Sigma ( 1 )}}$
    via the relation
    \[
    \sum_{\sigma \in \Sigma}
    \prod_{\alpha \in \sigma ( 1 )}
    \frac{X_\alpha}{1 - X_\alpha}
    =
    \frac{
    Q_{\Sigma} ( \mathbf X )
    }
    {
    \prod_{\alpha \in \Sigma ( 1 ) } ( 1 - X_\alpha ) 
    }.
    \]
\end{definition}

The following proposition is uniform in $v$,
hence it holds in our rings of varieties over $\CCC$.
It is the motivic analogue of \cite[Theorem 2.2.6]{batyrev-tschinkel1995anisotropic-toric}
quoted as Théorème 4.17 in \cite{bourqui2011MAMS}.
Remember that algebraic characters on
the group scheme $\ZZ_{\kappa ( v )}^n$
play the role of characters on the quotient $U ( F_v ) / U ( \mathcal O_v )$. 
Recall also that $\rho_\alpha \in \mathcal X_* ( T )$
is the generator of the ray given by $\alpha \in \Sigma ( 1 )$, so the evaluation by $\bm \chi_v$ makes sense.
\begin{proposition}
We have
    \[
( \four 
H_v ( \mathbf T_v ) ) 
( \bm \chi_v ) 
=
\frac{Q_\Sigma ( \bm \chi_v \TT )}{\prod_{\alpha \in \Sigma ( 1 ) } ( 1 - \bm \chi_v ( \rho_ \alpha ) T_\alpha )} 
\]
relatively to $\bm \chi_v \in \widehat{\mathcal X_* ( T )_{\kappa ( v )}} = \GG_{m,\kappa ( v )}^n$
and $v \in \mathscr C$. 
\end{proposition}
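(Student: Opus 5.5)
The plan is to compute the Fourier transform directly from the definition of $H_v$ as a finite-support (coefficientwise) motivic function on $\underline{\mathcal X_*(U)}_\CCC$, using the decomposition of $\mathcal X_*(U)$ into relative interiors of cones. By \cref{lemma-motivic-functions-are-defined-on-points} it suffices to check the identity fibrewise over a point $v$ (and coefficientwise in the $\TT$-adic expansion, each coefficient being a finitely supported function, so that the Fourier transform of \cref{def:motivic-Fourier-transform} makes sense term by term). By \cref{example:fourier_of_char_function}, linearity gives
\[
(\four H_v(\TT_v))(\bm\chi_v) = \sum_{\bm m\in\mathcal X_*(U)} \ev(\bm m,\bm\chi_v)\,\TT^{\langle\cdot,\bm m\rangle_\Sigma}.
\]

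Next I split the sum via the bijection \eqref{equation:decomposition-of-cochar-cones-of-Sigma}: every $\bm m$ lies in the relative interior of a unique $\sigma\in\Sigma$ and is uniquely $\bm m=\sum_{\alpha\in\sigma(1)} n_\alpha\rho_\alpha$ with $n_\alpha>0$ (regularity of $\Sigma$). Then $\TT^{\langle\cdot,\bm m\rangle_\Sigma}=\prod_{\alpha\in\sigma(1)}T_\alpha^{n_\alpha}$, and multiplicativity of evaluation ($\ev(m,\cdot)\ev(n,\cdot)=\ev(m+n,\cdot)$, \cref{notation:evaluation_map}) gives $\ev(\bm m,\bm\chi_v)=\prod_{\alpha\in\sigma(1)}\bm\chi_v(\rho_\alpha)^{n_\alpha}$. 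Hence the sum factorises cone by cone as
\[
\sum_{\sigma\in\Sigma}\prod_{\alpha\in\sigma(1)}\sum_{n_\alpha\geq1}(\bm\chi_v(\rho_\alpha)T_\alpha)^{n_\alpha}
=\sum_{\sigma\in\Sigma}\prod_{\alpha\in\sigma(1)}\frac{\bm\chi_v(\rho_\alpha)T_\alpha}{1-\bm\chi_v(\rho_\alpha)T_\alpha},
\]
and substituting $X_\alpha=\bm\chi_v(\rho_\alpha)T_\alpha$ in \cref{def-QSigma} yields the claimed expression $Q_\Sigma(\bm\chi_v\TT)/\prod_\alpha(1-\bm\chi_v(\rho_\alpha)T_\alpha)$.

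The main obstacle is making the geometric series and the identity of \cref{def-QSigma} meaningful in $\CharM{\mathcal X_*(U)}{\CCC}[[\TT]]$ rather than over $\ZZ$: one must check that $\ev(\rho_\alpha,\cdot)$ behaves as a formal variable, i.e. that the ring generated by the classes $\ev(\bm m,\cdot)$ receives a morphism from $\ZZ[\mathcal X_*(U)]$ sending $\bm m$ to $\ev(\bm m,\cdot)$ (true by the multiplicativity above and by the definition of the product via \cref{prop:Cartier_dual_of_product}). The identity of \cref{def-QSigma} holds in $\ZZ[[X_\alpha]]$, so pushing it along $X_\alpha\mapsto \ev(\rho_\alpha,\cdot)T_\alpha$ gives it in our ring; division by $\prod(1-\bm\chi_v(\rho_\alpha)T_\alpha)$ is interpreted as multiplication by its inverse geometric series. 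Uniformity in $v$ is automatic since all classes are constant over $\CCC$.
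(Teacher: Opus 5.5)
Your proposal is correct and follows the paper's proof essentially verbatim: you expand the Fourier transform coefficientwise, split $\mathcal X_*(U)$ into relative interiors of cones via \eqref{equation:decomposition-of-cochar-cones-of-Sigma}, sum the geometric series cone by cone, and substitute $X_\alpha=\bm\chi_v(\rho_\alpha)T_\alpha$ in the defining relation of $Q_\Sigma$. Your extra remark also justifies the substitution, through the ring morphism $\ZZ[[X_\alpha]]\to \CharM{\mathcal X_*(U)}{\CCC}[[\TT]]$ built from $\ev(m,\cdot)\ev(n,\cdot)=\ev(m+n,\cdot)$, and in doing so makes explicit a step the paper leaves implicit.
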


\begin{proof}
Indeed, by definition of the local Fourier transform,
\begin{align*}
( \four 
H_v ( \mathbf T_v ) ) 
( \bm \chi_v )
& =
\sum_{\bm m_v \in \mathcal X_* ( T )_{\kappa ( v )}}
H_v ( \mathbf T_v , \bm m_v )
\bm \chi_v ( \bm m_v ) \\
& = 
\sum_{\sigma \in \Sigma}
\sum_{(n_\alpha ) \in \ZZ_{>0}^{\sigma ( 1 )}}
H_v 
    \left ( \mathbf T_v , \sum_\alpha n_\alpha \rho_\alpha \right )
\prod_{\alpha \in \sigma ( 1 )} \bm \chi_v ( \rho_\alpha )^{n_\alpha} \\
& = 
\sum_{\sigma \in \Sigma}
\prod_{\alpha \in \sigma ( 1 )}
\frac{\bm \chi_v ( \rho_\alpha ) T_{v,\alpha}}{1 - \bm \chi_v ( \rho_\alpha ) T_{v,\alpha }}
\end{align*}
hence the claim, applying the definition of $Q_\Sigma$ to $X_\alpha = \bm \chi_v ( \rho_\alpha ) T_{v,\alpha}$. 
\end{proof}

\begin{proposition}The multivariate motivic height zeta function 
$\zeta_H^{\mathrm{mot}} ( \mathbf T )$
can be rewritten
    \begin{equation}
\zeta_H^{\mathrm{mot}} ( \mathbf T )
=
\left (
    \mathbf L - 1 
\right )^n 
\int_{\PDiv^\perp  }
 \prod_{v \in \CCC } \frac{Q_\Sigma ( \bm \chi_v \TT )}{\prod_{\alpha \in \Sigma ( 1 ) } ( 1 - \bm \chi_v ( \rho_ \alpha ) T_\alpha )} 
    \mathrm d \bm \chi  . 
\end{equation} 
\end{proposition}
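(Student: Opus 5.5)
This follows by combining the previous proposition, which expresses $\zeta_H^{\mathrm{mot}}(\mathbf T)$ as $(\LL-1)^n\int_{\PDiv^\perp}(\four H)(\bm\chi,\mathbf T)\,\mathrm d\bm\chi$, with the computation of the local factors of $\four H$. The plan is therefore to identify, coefficient by coefficient in the incidence algebra $\CharM{\Sym^{\bullet}(\mathcal X_*(U))}{\Sym^{\bullet}(\CCC)}$, the motivic Euler product $\prod_v\bigl(\sum_{\bm m_v}\ev(\bm m_v,\cdot)\mathbf T^{\langle\cdot,\bm m_v\rangle_\Sigma}\bigr)$ with the motivic Euler product of the rational expressions $Q_\Sigma(\bm\chi_v\TT)/\prod_\alpha(1-\bm\chi_v(\rho_\alpha)T_\alpha)$. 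Once this identification holds, we integrate both sides with $\int_{\PDiv^\perp}$, which is linear and acts coefficientwise.

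First, I would make sense of the right-hand side. Expanded as a power series in the $T_\alpha$, the local factor $Q_\Sigma(\bm\chi_v\TT)/\prod_\alpha(1-\bm\chi_v(\rho_\alpha)T_\alpha)$ has constant term $1$. Each of its coefficients is a finite $\ZZ$-linear combination of the classes $\ev(\bm m,\cdot)$ over $\CCC$, so it determines a family $\mathscr A=(a_{\bm n})$ of classes in $\CharM{\mathcal X_*(U)}{\CCC}$, indexed as in \cref{notation:I-sigma}. The Euler product in the statement is, by definition, the motivic Euler product of this family. The preceding proposition's computation is uniform in $v$ and takes place at the level of classes over $\CCC$. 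It shows that this family coincides with the family $(\ev(\bm m,\cdot))_{\bm m\in I_\sigma}$ defining $\four H$: both equal $\sum_\sigma\prod_{\alpha\in\sigma(1)}\chi(\rho_\alpha)T_\alpha/(1-\chi(\rho_\alpha)T_\alpha)$, and the geometric series expansion used there is an identity of formal series whose coefficients are classes $\ev(\sum n_\alpha\rho_\alpha,\cdot)=\prod_\alpha\ev(\rho_\alpha,\cdot)^{n_\alpha}$. For this last equality we use the multiplicativity noted in \cref{notation:evaluation_map}.

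Second, I would note that a motivic Euler product depends only on its family of coefficients. Equal local families therefore give equal mixed symmetric products $\Sym^{(\pi_\sigma)}_{\CCC/k}(\mathscr A)_*$ in every degree. This yields the equality of the two Euler products in the incidence algebra with characters.

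The main obstacle is to interpret the notation $\prod_v(\text{rational function of }\bm\chi_v)$ correctly. A product of Euler products, or a quotient by $\prod_\alpha(1-\bm\chi_v(\rho_\alpha)T_\alpha)$, does not in general factor termwise as Euler products unless the multiplicativity of Euler products is available in the character setting. I would avoid that issue entirely by reading the right-hand side as the Euler product of the expanded local series, so that only the coefficientwise identity above is needed. If the paper intends a factored reading, I would instead prove multiplicativity $\prod_v(A_vB_v)=\prod_vA_v\prod_vB_v$ for $\KCharVar{}{}$-valued families, adapting \cite{bilu2023MAMS}. That adaptation runs through the compatibility of $\Sym^\pi$ with the product of \cref{prop:Cartier_dual_of_product}, together with a fibrewise check using \cref{lemma-motivic-functions-are-defined-on-points}.
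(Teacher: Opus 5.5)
Your proposal is correct and follows the paper's approach. The paper obtains this proposition by substituting the local computation $(\four H_v)(\bm\chi_v)=Q_\Sigma(\bm\chi_v\TT)/\prod_\alpha(1-\bm\chi_v(\rho_\alpha)T_\alpha)$, which is done uniformly in $v$ and so holds for classes over $\CCC$, into the preceding Poisson-formula identity $\zeta_H^{\mathrm{mot}}(\mathbf T)=(\LL-1)^n\int_{\PDiv^\perp}(\four H)(\bm\chi,\mathbf T)\,\mathrm d\bm\chi$; your reading of the right-hand side as the Euler product of the expanded local series, supported on $\bigsqcup_\sigma I_\sigma$, is the intended one, and multiplicativity of Euler products with characters is only used later in the paper, when the $L$-functions are split off.
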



\subsection{Splitting of $\PDiv^\perp$}

\label{subsection:splitting-abstract-PDiv}

We separate the part of characters depending on the degree from the part depending only on the degree-zero linear class. This manipulation is very classical but we have to explain how it fits into our motivic setting. 

Recall that we fixed once and for all a $k$-divisor $\mathfrak D_1 = \sum \mathfrak d_v [v] $ of degree one on $\mathscr C$. 
Any support $\mathsf S$ being given as in \cref{remark-application-to-Div_S(C)}, we can always enlarge it to include the support of $\mathfrak D_1$. 
Characters $\bm z : \bm m \mapsto \bm z^{\bm m} $ 
on $M = \mathcal X_* ( U ) \simeq \mathbf Z^n$
will be identified with 
\[
\bm m \mapsto \bm \chi ( \bm m \mathfrak D_1 ) = 
\left ( \prod_{v \in | \mathfrak D_1 |} \bm \chi_v  ( \mathfrak d_v )\right )^{\bm m}
\]
via 
\[
\bm z = \prod_{v \in | \mathfrak D_1 |} \bm \chi_v ( \mathfrak d_v  \cdot ) .
\]
We are able to write any divisor $D$ supported on $\mathsf S$ as
\[
D = 
( D - \deg ( D ) \mathfrak D_1 ) 
+
\deg ( D ) \mathfrak D_1 
\]
so that for any family of characters $\bm \chi $ we have
\[
\bm \chi ( D ) 
=
\bm \chi ( D - \deg ( D ) \mathfrak D_1 ) 
\bm z^{\deg ( D ) }. 
\]
In other words we perform the change of variables 
\[
\bm \chi \in \mathrm D ( \Div_\mathsf S ( \CCC ) ) 
\mapsto \left ( \underbrace{\bm \chi ( \scdot - \deg ( \scdot ) \mathfrak D_1  ))}_{=:\bm \chi ' } , \underbrace{\bm \chi ( \deg ( \scdot ) \mathfrak D_1  )}_{=: \bm z } \right ) 
\in \mathrm D ( \Div_\mathsf S^0 ( \CCC ) ) \times \mathrm D ( M ) .
\]
Moreover,
\cref{example:familie-of-char-is-just-a-torsor}
allows us to move $\bm z $ outside the motivic Euler product.

This way we are also able to use \cref{sect:local_integration_operator} and write:
\[
\int_{\PDiv^\perp  }
=
\int_{ D ( ( \Div ( \CCC ) / \Div^0 ( \CCC ) )^n }
\circ 
\int_{(\PDiv^0)^\perp  } . 
\]

\begin{lemma}
Under the previous splitting, 
\[
( \four 
H_v ( \mathbf T_v ) ) 
( \bm \chi_v ' , \bm z )
= 
\frac{Q_\Sigma ( \bm \chi_v ' \prod_\alpha \langle \bm z , \rho_\alpha \rangle T_\alpha )}{\prod_{\alpha \in \Sigma ( 1 ) } ( 1 - \bm \chi_v'  ( \rho_ \alpha ) \prod_\alpha \langle \bm z , \rho_\alpha \rangle T_\alpha )} .
\]
\end{lemma}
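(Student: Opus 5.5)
Our approach is to substitute the change of variables $\bm\chi \mapsto (\bm\chi',\bm z)$ directly into the local formula for $(\four H_v(\mathbf T_v))(\bm\chi_v)$ proved in the preceding proposition. That formula shows that $\four H_v$ depends on $\bm\chi_v$ only through the values $\bm\chi_v(\rho_\alpha)$, $\alpha\in\Sigma(1)$, entering as $X_\alpha=\bm\chi_v(\rho_\alpha)T_\alpha$ in $Q_\Sigma(\mathbf X)/\prod_\alpha(1-X_\alpha)$. So it suffices to compute each $\bm\chi_v(\rho_\alpha)$ after the splitting and then substitute into this rational expression. Since $Q_\Sigma$ is a polynomial defined by a formal identity, no convergence issue arises.

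The first step is to make the splitting explicit at a single place $v$. The local divisor $\rho_\alpha[v]$ has degree $\deg(\kappa(v):k)\rho_\alpha$ in $\mathcal X_*(U)$. Using the decomposition $D=(D-\deg(D)\mathfrak D_1)+\deg(D)\mathfrak D_1$ together with multiplicativity of characters, we get
\[
\bm\chi(\rho_\alpha[v])=\bm\chi'(\rho_\alpha[v])\,\bm z^{\deg(\rho_\alpha[v])},
\]
where $\bm\chi'$ is the restriction to degree-zero divisors and $\bm z$ is defined via $\mathfrak D_1$ as in the text. We read the statement's notation $\langle\bm z,\rho_\alpha\rangle$ as the pairing of $\bm z\in D(M)$ with $\rho_\alpha\in M$, that is $\bm z^{\rho_\alpha}$, raised to the local degree. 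This power is harmless: in the motivic Euler product it is absorbed into the grading of $\Sym^\bullet$, because $T_{v,\alpha}$ carries the degree of $v$. The substitution then becomes $X_\alpha=\bm\chi_v'(\rho_\alpha)\langle\bm z,\rho_\alpha\rangle T_\alpha$.

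The second step, and the only delicate one, is to justify this identity motivically rather than pointwise. It should hold as an identity of motivic functions with characters over $\CCC$ (uniformly in $v$) after pulling back along the isomorphism $D(\Div_{\mathsf S})\simeq D(\Div^0_{\mathsf S})\times D(M)$ provided by the splitting in \cref{remark-application-to-Div_S(C)}. By \cref{prop:Cartier_dual_of_product} and the commutative diagram expressing evaluation on a sum as the product of evaluations, the class $\ev(\rho_\alpha[v],\cdot)$ pulls back to the product $\ev(\rho_\alpha[v]-\deg(\rho_\alpha[v])\mathfrak D_1,\cdot)\cdot\ev(\deg(\rho_\alpha[v])\mathfrak D_1,\cdot)$. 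This is exactly the required factorisation. The torsor structure from \cref{example:familie-of-char-is-just-a-torsor} makes the $\bm z$-variable a global (constant) factor.

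Finally, we expand the local series termwise. Each monomial $\prod_\alpha\bm\chi_v(\rho_\alpha)^{n_\alpha}T^{n_\alpha}$ transforms by the factorisation above, and resumming gives the claimed rational expression in which every $X_\alpha$ is replaced by the substituted value. We expect the main obstacle to be purely bookkeeping: checking that the $\bm z$-factor is consistent with the gradings and is genuinely independent of $v$. We would handle this by verifying the identity fibrewise, using \cref{lemma-motivic-functions-are-defined-on-points}.
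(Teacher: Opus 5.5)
Your proposal is correct and follows essentially the paper's own (implicit) argument: substitute $\bm\chi(D)=\bm\chi(D-\deg(D)\mathfrak D_1)\,\bm z^{\deg(D)}$ into the preceding formula $Q_\Sigma(\bm\chi_v\TT)/\prod_\alpha(1-\bm\chi_v(\rho_\alpha)T_\alpha)$, pull $\bm z$ out of the Euler product via the torsor example, and let the Euler-product grading absorb the local-degree power of $\bm z$, exactly as you do. Your reading $X_\alpha=\bm\chi'_v(\rho_\alpha)\langle\bm z,\rho_\alpha\rangle T_\alpha$ is the intended one, since the inner $\prod_\alpha$ in the statement is a notational slip; one small point is that $\bm\chi'$ is $\bm\chi$ composed with $D\mapsto D-\deg(D)\mathfrak D_1$ rather than a plain restriction to $\Div^0$, which is in fact how your second step uses it.
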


In the sequel we write $\bm \chi$ for $\bm \chi '$. The previous discussion leads to the following global statement. 

\begin{proposition}The multivariate motivic height zeta function 
$\zeta_H^{\mathrm{mot}} ( \mathbf T )$
can be rewritten
    \begin{equation}\label{equ:zeta_function_after_splitting}
\zeta_H^{\mathrm{mot}} ( \mathbf T )
=
\left (
    \mathbf L - 1 
\right )^n 
\int_{D ( \mathcal X_* ( U ) ) }
\int_{(\PDiv^0)^\perp}
 \prod_{v \in \CCC } \frac{Q_\Sigma ( \bm \chi_v \prod_\alpha \langle \bm z , \rho_\alpha \rangle T_\alpha )}{\prod_{\alpha \in \Sigma ( 1 ) } ( 1 - \bm \chi_v ( \rho_ \alpha ) \langle \bm z , \rho_\alpha \rangle T_\alpha )} 
    \mathrm d \bm \chi \mathrm d \bm z   . 
\end{equation} 
\end{proposition}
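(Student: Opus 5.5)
The plan is to derive \eqref{equ:zeta_function_after_splitting} from the previous global expression
\[
\zeta_H^{\mathrm{mot}} ( \mathbf T )=(\mathbf L-1)^n\int_{\PDiv^\perp}\prod_{v}\frac{Q_\Sigma(\bm\chi_v\TT)}{\prod_\alpha(1-\bm\chi_v(\rho_\alpha)T_\alpha)}\,\mathrm d\bm\chi .
\]
This rewriting is done coefficient by coefficient. Each coefficient of the series lives over some $\Sym^{\pi}_{/\kb}(\CCC)_*$, so it suffices to prove the equality of the integrals fibrewise above every point $E$. By \cref{lemma-motivic-functions-are-defined-on-points}, this fibrewise statement implies equality in $\MMM_{\Sym^{\pi}_{/\kb}(\CCC)_*}$.

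\textbf{Step 1: fibrewise splitting of the groups.} Fix $E$ and let $\mathsf S$ be its set of support points. Enlarge $\mathsf S$ to contain $|\mathfrak D_1|$; by the independence argument of \cref{sect:local_integration_operator}, this does not change the value of the integral. Put $G=\Div_{\mathsf S}(\CCC)^n$ (over $\kappa(E)$), $H=\PDiv_{\mathsf S}(\CCC)^n$ and $H_0=\Div^0_{\mathsf S}(\CCC)^n$. The splitting $D\mapsto (D-\deg(D)\mathfrak D_1,\deg D)$ from \cref{remark-application-to-Div_S(C)} gives
\[
G=H_0\oplus M,\qquad M=\mathcal X_*(U),\qquad H\subset H_0 .
\]
We are therefore in the setting of \cref{def:partial_integration} with $M'=H_0$, $M''=M$ and $N=H$. \cref{remark:composition_of_partial_integration} then yields
\[
\int_{D(G/H)}=\int_{D(M)}\circ\int_{D(H_0/H)}.
\]
The left-hand side is the fibre of $\int_{\PDiv^\perp}$, and the right-hand side is the fibre of $\int_{D(\mathcal X_*(U))}\circ\int_{(\PDiv^0)^\perp}$. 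The inner integrals are compatible as $\mathsf S$ varies by the directed-system construction of \cref{sect:local_integration_operator}, since $\PDiv_{\mathsf S'}\cap\Div^0_{\mathsf S}=\PDiv_{\mathsf S}$.

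\textbf{Step 2: rewriting the integrand.} On the character side, the Cartier dual of this splitting identifies $D(G)\simeq D(H_0)\times D(M)$ via $\bm\chi\mapsto(\bm\chi',\bm z)$. A generator $\ev(\bm m_v,\cdot)$ at a place $v$ corresponds to the divisor $\bm m_v[v]$, so by \cref{prop:Cartier_dual_of_product} it becomes
\[
\ev(\bm m_v[v]-\deg(v)\bm m_v\mathfrak D_1,\bm\chi')\cdot \bm z^{\deg(v)\bm m_v}.
\]
Substituting this into the local Fourier transform, and writing each $\bm m_v=\sum n_\alpha\rho_\alpha$ in its cone as in the computation of $\four H_v$, gives exactly the lemma preceding the statement: $\bm\chi_v(\rho_\alpha)T_\alpha$ is replaced by $\bm\chi'_v(\rho_\alpha)\langle\bm z,\rho_\alpha\rangle T_\alpha$.

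\textbf{Step 3: pulling $\bm z$ out of the Euler product.} This is the main obstacle. It requires checking that the product over $v$ of the factors $\bm z^{\deg(v)\bm m_v}$ is a legitimate class in the ring with characters, i.e.\ that $\bm z$ can be treated as a single global variable rather than a family of local ones. The plan is to use \cref{example:familie-of-char-is-just-a-torsor}: the symmetric product $\Sym^\pi(\GG_{m,\CCC}^n)_*$ is a Zariski-trivial torsor over $\Sym^\pi(\CCC)_*$. This makes the factor $\bm z$, which is defined through the diagonal $\prod_{v\in|\mathfrak D_1|}\bm\chi_v(\mathfrak d_v\,\cdot)$, a global coordinate independent of $v$. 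It can then be moved outside the Euler product as a substitution $T_\alpha\mapsto\langle\bm z,\rho_\alpha\rangle T_\alpha$, coefficientwise in the incidence algebra.

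Finally, assembling the fibrewise identities and multiplying by $(\mathbf L-1)^n$ gives \eqref{equ:zeta_function_after_splitting}.
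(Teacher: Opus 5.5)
Your proposal is correct and follows essentially the same route as the paper. In both, you enlarge $\mathsf S$ to contain $|\mathfrak D_1|$ and split $\Div_{\mathsf S}$ via $D\mapsto (D-\deg(D)\mathfrak D_1,\deg(D)\mathfrak D_1)$, factor $\int_{\PDiv^\perp}$ as $\int_{D(\mathcal X_*(U))}\circ\int_{(\PDiv^0)^\perp}$ through \cref{sect:local_integration_operator} and \cref{remark:composition_of_partial_integration}, rewrite the local factors through the change of variables $\bm\chi\mapsto(\bm\chi',\bm z)$, and pull $\bm z$ out of the Euler product via \cref{example:familie-of-char-is-just-a-torsor}. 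Your fibrewise bookkeeping is simply a more explicit version of the paper's sketch: compatibility in $\mathsf S$ via $\PDiv_{\mathsf S'}\cap\Div^0_{\mathsf S}=\PDiv_{\mathsf S}$, and the appeal to \cref{lemma-motivic-functions-are-defined-on-points}.
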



\section{Analysis of the integral over the Fourier domain}

\label{section:final-analysis}


\subsection{Motivic $\mathfrak L$-functions of free $\mathbf Z$-modules and indicator functions of cones}
We develop an elementary motivic analogue of the theory from \cite[Chap. 6]{bourqui2011MAMS}.
We keep most of Bourqui's notations here. 
Some of the proofs are very similar to the ones in the classical setting; some of them are included for the reader's convenience. 

Conventions are the following: in the whole section, $N$ is a free $\ZZ$-module, and we denote by 
\[\langle\cdot,\cdot\rangle:N^{\vee}\times N \to \ZZ\]
the pairing with $N^{\vee}$. For a cone $\Lambda\subset N_{\RR}$, we denote by $\overset{\circ}{\Lambda}$ its relative interior.  In what follows,~$\Upsilon$ will always denote a rational polyhedral cone in $N_{\RR}$ (i.e. generated by a finite number of elements of $N$). Such a cone may be written as the support of a regular fan. 

\subsubsection{$\mathfrak L$-function associated to a cone}

\begin{definition}
Let $S$ be a scheme, $N$ a free $\mathbf Z$-module of finite rank
and $N_S$ its associated constant group scheme over $S$. 

For every strictly convex rational polyhedral cone $\Upsilon$ in $N_\mathbf R$,
we define the 
$\mathfrak L$-function associated to $N$ relative to $S$  
to be the formal power series
\[
\mathfrak L_{N,\Upsilon}
( \mathbf T ) 
=
\sum_{y \in N \cap \Upsilon} \mathbf T^y 
\in \KVar S [[ N \cap \Upsilon ]] = \KVar S [[ \mathbf T ]] . 
\]
It is the formal power series associated to the motivic function $\bm 1_{N \cap \Upsilon} \in  \KVar {N_S}$. 

This definition naturally extends to all the variants and localisations of the Grothendieck ring of varieties
we use in the present work. 
In particular,
\[
\mathfrak L_{N,\Upsilon}
( \bm \chi \mathbf T ) 
=
\sum_{y \in N \cap \Upsilon} \bm \chi ( y ) \mathbf T^y 
\in \KCharVar{N}{S} [[ N \cap \Upsilon ]] . 
\]
More generally, 
if 
\[
\mathfrak a 
\in 
\KCharVar{N}{N_S}
\]
is a motivic function with characters on $N_S$
and $A$ is a subset of $N_\mathbf R$,
we set 
\[
\mathfrak L_{N, A , \mathfrak a } ( \mathbf T )
= 
\sum_{y \in N \cap A}
\mathfrak a ( y ) \mathbf T^y . 
\]
If $\mathfrak a$ is identically $1$,
we simply write $\mathfrak L_{N, A }
$.
\end{definition}

\begin{lemma}
    If $\lambda_0$ lies in the relative interior of $\Upsilon^\vee$,
    the series 
    \[
    \mathfrak L_{N,\Upsilon} \left ( T ^{\lambda_0} \right ) 
    =
    \sum_{y \in N \cap \Upsilon} T ^{ \langle \lambda_0 , y \rangle } 
    \]
    converges for $| T | < 1$ and 
    admits a pole at $T=1$ of order at most $\dim ( \Upsilon )$. 
    Moreover,
    if $\dim ( \Upsilon ) = \rk ( N )$,
    there exists a positive integer $a_{\lambda_0}$ such that 
    \[
    \left ( 
        ( 
        1 - T^{a_{\lambda_0}}
        )^{\rk ( N )}
    \mathfrak L_{N,\Upsilon} \left ( T ^{\lambda_0} \right )
    \right )_{T = 1}
    =
     a_{\lambda_0}^{\rk ( N ) }
    \mathfrak X_{N, \Upsilon^\vee } ( \lambda_0 ) \in \mathbf N
    \]
    where 
    \[
    \mathfrak X_{N, \Upsilon^\vee } ( \lambda )
    =
    \int_{\Upsilon} \exp (- \langle \lambda , y \rangle ) \mathrm d y
    \]
    with $\mathrm d y$ being the Lebesgue measure on $N^\vee_\mathbf R$ normalised by $N^\vee$. 
\end{lemma}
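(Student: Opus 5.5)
We will reduce everything to the regular (unimodular) case by subdividing. Since $\Upsilon$ is a strictly convex rational polyhedral cone, we choose a regular fan $\Delta$ with support $\Upsilon$. Then $N\cap\Upsilon$ is the disjoint union over $\tau\in\Delta$ of the relative interiors $N\cap\overset{\circ}{\tau}$, and each of these is a free commutative monoid translate: $N\cap\overset{\circ}{\tau}=\{\sum_{\alpha\in\tau(1)} n_\alpha\rho_\alpha,\ n_\alpha\geq 1\}$, where the $\rho_\alpha$ are the primitive generators, which form part of a basis of $N$. Exactly as in the computation of $\four H_v$ above, we then obtain the identity of formal series
\[
\mathfrak L_{N,\Upsilon}(T^{\lambda_0})=\sum_{\tau\in\Delta}\prod_{\alpha\in\tau(1)}\frac{T^{\langle\lambda_0,\rho_\alpha\rangle}}{1-T^{\langle\lambda_0,\rho_\alpha\rangle}} .
\]
Since $\lambda_0$ lies in the relative interior of $\Upsilon^\vee$, every $a_\alpha:=\langle\lambda_0,\rho_\alpha\rangle$ is a positive integer. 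Each factor is therefore a geometric series whose coefficients are integers, hence of dimension $0$, so the series converges for $|T|<1$. Each summand has a pole at $T=1$ of order $|\tau(1)|\leq\dim\Upsilon$. This gives the first assertion, once we note that the coefficients are in fact integers (multiples of $1$ in $\KVar S$), so that everything is a rational function in $\ZZ(T)$.

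For the residue statement, assume $\dim\Upsilon=\rk N=:r$. We take $a_{\lambda_0}$ to be the least common multiple of the $a_\alpha$ over all rays of $\Delta$. Then $1-T^{a_\alpha}$ divides $1-T^{a_{\lambda_0}}$, so $(1-T^{a_{\lambda_0}})^r\mathfrak L_{N,\Upsilon}(T^{\lambda_0})$ is a polynomial, and its value at $T=1$ can be computed summand by summand. The summands with $|\tau(1)|<r$ vanish at $T=1$. For a maximal cone $\tau$ we have
\[
\frac{1-T^{a_{\lambda_0}}}{1-T^{a_\alpha}}\Big|_{T=1}=\frac{a_{\lambda_0}}{a_\alpha},
\]
so the value at $T=1$ is $a_{\lambda_0}^r\sum_{\tau\ \mathrm{maximal}}\prod_{\alpha\in\tau(1)}a_\alpha^{-1}$. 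On the other hand, splitting the integral $\int_\Upsilon e^{-\langle\lambda_0,y\rangle}\mathrm dy$ over the maximal cones of $\Delta$, and using for each $\tau$ the linear change of variables $y=\sum y_\alpha\rho_\alpha$, whose Jacobian is $1$ because $\tau$ is regular, gives $\prod_{\alpha\in\tau(1)}\int_0^\infty e^{-a_\alpha y_\alpha}\mathrm dy_\alpha=\prod_\alpha a_\alpha^{-1}$. Comparing the two expressions yields the equality with $a_{\lambda_0}^r\mathfrak X_{N,\Upsilon^\vee}(\lambda_0)$. This value is a nonnegative rational number that equals the value of an integer polynomial, so it is an integer; it is moreover positive.

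The main points of care are, first, the existence of a regular subdivision of $\Upsilon$, which is standard toric resolution and uses strict convexity and rationality. Second, one must check that lower-dimensional cones contribute nothing: their pole order is strictly less than $r$, so multiplying by $(1-T^{a_{\lambda_0}})^r$ kills them at $T=1$. Third, one should verify that the normalisation of the Lebesgue measure, described as normalised by the relevant lattice, matches the unimodular Jacobian $1$. We expect this normalisation issue to be the only genuinely delicate point.
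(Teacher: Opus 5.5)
Your proposal is correct and follows essentially the same route as the paper: you take a regular fan $\Delta$ with support $\Upsilon$ and write $\mathfrak L_{N,\Upsilon}(T^{\lambda_0})=\sum_{\delta\in\Delta}\prod_{\ell\in\delta(1)}\frac{T^{\langle\lambda_0,\rho_\ell\rangle}}{1-T^{\langle\lambda_0,\rho_\ell\rangle}}$, choose $a_{\lambda_0}$ as the lcm of the $\langle\lambda_0,\rho_\ell\rangle$, and evaluate at $T=1$, where only the maximal cones contribute. The one small difference is that you check $\mathfrak X_{N,\Upsilon^\vee}(\lambda_0)=\sum_{\delta\ \text{maximal}}\prod_{\ell\in\delta(1)}\langle\lambda_0,\rho_\ell\rangle^{-1}$ directly by a unimodular change of variables (with Lebesgue measure on $N_{\mathbf R}$ normalised by $N$, which is what the statement must mean), whereas the paper cites Bourqui's Remarque 5.3 for this identity.
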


\begin{proof}
    We can write $\Upsilon$ as the support of a regular fan $\Delta$ to get 
    \[
    \mathfrak L_{N,\Upsilon} \left ( T ^{\lambda_0} \right ) 
        =
    \sum_{\delta \in \Delta} 
        \prod_{\ell \in \delta ( 1 )}    
            \left ( 
                \frac{1}{1 - T^{ \langle \lambda_0 , \rho_\ell \rangle }} 
                - 1 
            \right )
    \]
    where $\rho_\ell$ is the generator corresponding to $\ell \in \Delta ( 1 )$. 
    Furthermore, one can show 
    \cite[Remarque 5.3]{bourqui2011MAMS}
    that 
    \[
     \mathfrak X_{N, \Upsilon^\vee } ( \lambda_0 )
    =
     \sum_{
    \substack{
        \delta \in \Delta \\
        \dim ( \delta ) = \rk ( N ) 
    }
    }
        \prod_{\ell \in \delta ( 1 )}    
    \frac{1}{\langle \lambda_0 , \rho_\ell \rangle } .
    \]
    Therefore we take 
    \[
    a_{\lambda_0}
    = 
    \LCM_{\ell \in \delta ( 1 )} \langle \lambda_0 , \rho_\ell \rangle
    \]
    and check that 
    \[
       ( 
        1 - T^{a_{\lambda_0}} 
        )^{\rk ( N )}
    \sum_{
    \substack{
        \delta \in \Delta \\
        \dim ( \delta ) = \rk ( N ) 
    }
    }
        \prod_{\ell \in \delta ( 1 )}    
            \left ( 
                \frac{1}{1 - T^{\langle \lambda_0 , \rho_\ell \rangle} } 
                - 1 
            \right )
    \]
    takes the value 
    \[
           a_{\lambda_0}^{\rk ( N ) }
    \mathfrak X_{N, \Upsilon^\vee } ( \lambda_0 )  
    \]
    at $T = 1$.
    Moreover it is clear from the definitions that $ a_{\lambda_0}^{\rk ( N ) }
    \mathfrak X_{N, \Upsilon^\vee } ( \lambda_0 ) $ is a non-negative integer. 
\end{proof}

We assume that we are given 
an exact sequence of $\mathbf Z$-modules,
all free of finite rank,
\[
0 \longrightarrow M 
\overset{i}{\longrightarrow}
N
\overset{j}{\longrightarrow}
\Gamma 
\longrightarrow 
0 . 
\]
This exact sequence induces an exact sequence of dual modules
\[
0 
\longrightarrow 
\Gamma^\vee 
\overset{j^\vee}{\longrightarrow}
N^\vee 
\overset{i^\vee}{\longrightarrow}
M^\vee  
\longrightarrow 
0 
\]
and an exact sequence of algebraic tori
\[
0 \longrightarrow 
D ( \underline{\Gamma}_S )
\overset{D ( j_S )}{\longrightarrow}
D ( \underline N_S )  
\overset{D ( i_S )}{\longrightarrow}
D ( \underline M_S )  
\longrightarrow 
0 .
\]
Given a cone $\Lambda\subset N_{\mathbf{R}}$, this allows us to give a meaning to:
\[
 \mathfrak L_{N,\Lambda \cap M_\mathbf R} ( \mathbf T )
 =
  \sum_{y \in \Lambda \cap i(M) \cap N} \mathbf T^{ y }
= 
 \sum_{y \in \Lambda \cap M} \mathbf T^{i ( y )}
 =
 \sum_{y \in \Lambda \cap M} i^\vee ( \mathbf T^y ) 
 = 
 \mathfrak L_{M,\Lambda \cap M_\mathbf R}  ( i^\vee ( \mathbf T )  ) . 
\]

The following is a 
motivic version of 
Bourqui's ``lemme technique : forme jouet''
\cite[Lemme 6.14]{bourqui2011MAMS}.
\begin{lemma}
Let $S$ be a scheme.
We have 
    \[
    \int_{D ( \underline{\Gamma}_S )} 
    \mathfrak L_{N,\Lambda} ( D ( j_S ) ( \bm \chi ) \mathbf T ) \mathrm d \bm \chi 
    = 
    \mathfrak L_{N,\Lambda \cap i (  M_\mathbf R )} ( \mathbf T ) 
    \]
    in $\KVar S [[ N \cap \Upsilon ]]$. 
\end{lemma}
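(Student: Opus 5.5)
The plan is to work coefficient by coefficient in $\KVar S[[N\cap\Lambda]]$ and reduce the identity to the orthogonality relation (\ref{equation:orthogonality}) for the group $\Gamma$. Both sides are formal power series indexed by $y \in N\cap\Lambda$, and the integration operator $\int_{D(\underline{\Gamma}_S)}$ is $\KVar S$-linear. We therefore extend it coefficientwise to formal series and compare the coefficients of $\mathbf T^y$ for each $y$. This is legitimate because each coefficient of $\mathfrak L_{N,\Lambda}(D(j_S)(\bm\chi)\mathbf T)$ is a single generator of $\KCharVar{\Gamma}{S}$.

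First we identify that coefficient. By definition,
\[
\mathfrak L_{N,\Lambda}(D(j_S)(\bm\chi)\mathbf T)=\sum_{y\in N\cap\Lambda}\big(D(j_S)(\bm\chi)\big)(y)\,\mathbf T^y .
\]
Here $D(j_S)(\bm\chi)$ is the character $\bm\chi\circ j$ of $N$, for $\bm\chi\in D(\underline\Gamma_S)$. So the coefficient of $\mathbf T^y$ is the class
\[
[S\times_S D(\underline\Gamma_S),\ \bm\chi\mapsto\bm\chi(j(y))] = \ev(j(y),\cdot)\in\KCharVar{\Gamma}{S},
\]
in the sense of \cref{notation:evaluation_map}, with $M=\Gamma$, $L=\Gamma$ and $g=\mathrm{id}$. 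The one point to check carefully is that this is a genuine generator of the type in \cref{def:new-char-var}. The relevant fact is that pulling back the evaluation character along $D(j_S)$ amounts to evaluating at $j(y)$; this follows from functoriality of Cartier duality, i.e.\ from the isomorphism $\Hom_{\rm gp}(N,\Gamma)\simeq\Hom_{S\text{-gp}}(D(\underline\Gamma_S),D(\underline N_S))$ recalled in the remark preceding \cref{def:new-char-var}.

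Next we apply the orthogonality relation (\ref{equation:orthogonality}) to each coefficient:
\[
\int_{D(\underline\Gamma_S)}\ev(j(y),\bm\chi)\,\mathrm d\bm\chi = \begin{cases} 1 & \text{if } j(y)=0,\\ 0 & \text{otherwise.}\end{cases}
\]
By exactness of the sequence, $j(y)=0$ if and only if $y\in i(M)$. The surviving terms are therefore exactly those with $y\in \Lambda\cap i(M)\cap N$, and their sum is $\mathfrak L_{N,\Lambda\cap i(M_\RR)}(\mathbf T)$. This uses the observation that $i(M)=N\cap i(M_\RR)$, which holds because $\Gamma$ is torsion-free, so $i(M)$ is saturated in $N$.

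The main obstacle is formal rather than computational: we must justify that $\int_{D(\underline\Gamma_S)}$ may be applied termwise to an infinite formal series. My approach is to define the integral on $\KCharVar{\Gamma}{S}[[N\cap\Lambda]]$ coefficientwise, and to note that this is how the statement is meant, since equality is asserted in $\KVar S[[\,\cdot\,]]$. No convergence or completion issues then arise, because each coefficient involves only a single class.
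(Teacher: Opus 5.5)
Your proposal is correct and follows essentially the same route as the paper: expand $\mathfrak L_{N,\Lambda}(D(j_S)(\bm\chi)\mathbf T)$ as $\sum_{y}\bm\chi(j(y))\mathbf T^y$, integrate coefficient by coefficient, apply the orthogonality relation, and use exactness to keep exactly the terms with $y\in i(M)$. Your two additional remarks make explicit points the paper's computation uses without comment: the integral on formal series is taken coefficientwise, and $N\cap i(M_{\mathbf R})=i(M)$ because $\Gamma$ is torsion-free.
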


\begin{proof}
    The proof is completely formal and goes exactly as the one of \cite[Lemma 6.14]{bourqui2011MAMS}.
    We can directly compute the left hand side to get 
    \begin{align*}
       \int_{D ( \underline{\Gamma}_S )}
    \mathfrak L_{N,\Lambda} ( D ( j_S ) ( \bm \chi ) \mathbf T ) \mathrm d \bm \chi 
    & = 
    \int_{D ( \underline{\Gamma}_S )}
    \sum_{y \in \Lambda \cap N}
     \bm \chi (  j ( y ) ) \mathbf T^y \mathrm d \bm \chi \\
    & = 
    \sum_{y \in \Lambda \cap N}
      \mathbf T^y 
          \int_{D ( \underline{\Gamma}_S )}
    \bm \chi ( j ( y ) ) \mathrm d \bm \chi \\
    & = 
     \sum_{
     \substack{ y \in \Lambda \cap N \\ 
     j ( y ) = 0}
     }
    \mathbf T^y 
        \tag{by def. of $\int_{D ( \underline{\Gamma}_S )}$}
    \\
    & = \mathfrak L_{N,\Lambda \cap i (  M_\mathbf R )} ( \mathbf T )
    \end{align*}
    which is what we wanted. 
\end{proof}

\begin{notation}
From now on, we fix once and for all a basis 
$( \lambda_i )_{i\in I}$
of $N$
and take $\Lambda$ to be the cone generated by this basis. 
\end{notation} 

\begin{definition}
    An \emph{elementary admissible motivic function of non-negative multiplicity}
    is a motivic formal series 
    \[
    \mathfrak L_{N,\Lambda , \mathfrak a } ( \mathbf T )
        = 
    \sum_{y \in N \cap \Lambda}
    \mathfrak a ( y ) \mathbf T^y 
    \]
    converging for $| \mathbf T| < \mathbf L^{\varepsilon}$ for some $\varepsilon >0$. 

    Whenever $r$ is a non-negative integer, 
    an \emph{elementary admissible motivic function of multiplicity at least $-r$} is 
    a motivic function $f$ on $\Lambda$ 
    admitting a decomposition
    \[
    f ( \mathbf T ) 
    =
    g ( \mathbf T ) 
    \mathfrak L_{N ', \overset{\circ}{\Upsilon} , \mathfrak a } ( \mathbf T )
    \]
    where $g$ is an elementary admissible function of non-negative multiplicity,
    $\Upsilon$ is a rational polyhedral subcone of $N_\mathbf R$ of dimension at most $r$
    and $N'$ is a subgroup of $N$. 

    \emph{Admissible motivic functions of multiplicity at least $-r$}
    are finite linear combinations of 
    such elementary functions. 
\end{definition}

\subsubsection{Decomposition lemmas and bounds on degrees}
Recall that $\Lambda\subset N_{\RR}$ is the simplicial cone generated by $(\lambda_i)_{i\in I}.$ Let $( \lambda_i^\vee )_{i\in I}$ be the basis of $N^\vee$
induced by $( \lambda_i )_{i\in I}$
and define ${\lambda^\vee = \sum_{i\in I} \lambda_i^\vee}$. We also fix a rational polyhedral cone $\Upsilon\subset N_{\RR}$ contained in $\Lambda$, and write $\Upsilon$ as the support of a regular fan $\Delta$. 
Following Bourqui \cite[\S 6.4.1]{bourqui2011MAMS}, we define for any subset $K\subset I$, 
any cone $\delta \in \Delta$ and any element $z\in \Lambda \cap N$,
\[
\delta ( K , z )
=
\{ 
y \in \overset{\circ}{\delta} 
\mid 
\forall \, i \in K ,  \quad 
\langle \lambda_i^\vee , y \rangle < \langle \lambda_i^\vee , z \rangle 
\}  
\]
as well as 
\[
\delta ( 1 )_K 
=
\{
\ell \in \delta ( 1 ) 
\mid 
\forall \, i \in K , \quad 
\langle \lambda_i^\vee , \rho_\ell \rangle = 0
\}.
\]
We denote by $\delta_K$ the cone spanned by rays in $\delta ( 1 )_K$
and by $\delta^K$ the cone spanned by rays of $\delta$ not in $\delta ( 1 )_K$. 

The following lemma is an immediate extension
of \cite[Lemme 6.5]{bourqui2011MAMS}
to our setting.

\begin{lemma}
\label{lemma:decomposition-of-L-function-cone}
For any $z \in \Lambda \cap N$
we have 
\begin{equation}
   \mathfrak L_{N , \Upsilon \cap ( z + \Lambda )} ( \mathbf T ) 
   =
   \sum_{\substack{
        \delta \in \Delta \\
        K \subset I }
    }
    ( - 1 )^{| K |}
    \mathfrak L_{N , \delta ( K , z )} ( \mathbf T )  . 
\end{equation} 
\end{lemma}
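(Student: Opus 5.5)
The identity is purely combinatorial. Both sides are generating series of indicator functions of subsets of $N$ with coefficient $1$. It therefore suffices to prove the corresponding identity of indicator functions on $N$:
\[
\mathbf 1_{\Upsilon \cap (z+\Lambda)} = \sum_{\delta\in\Delta}\sum_{K\subset I}(-1)^{|K|}\mathbf 1_{\delta(K,z)},
\]
and then multiply by $\mathbf T^y$ and sum over $y\in N$. This is legitimate in $\KVar S[[N\cap\Upsilon]]$ because the coefficient of each $\mathbf T^y$ on either side is an integer multiple of $1=[S]$. No motivic input is needed beyond the fact that $\mathfrak L_{N,A}$ is defined as the series $\sum_{y\in N\cap A}\mathbf T^y$, which is additive in the indicator function of $A$. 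This is the same reduction as in \cite[Lemme 6.5]{bourqui2011MAMS}.

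To prove the pointwise identity, first use that $\Upsilon$ is the disjoint union of the relative interiors $\overset{\circ}{\delta}$ for $\delta\in\Delta$. This gives $\mathbf 1_{\Upsilon\cap(z+\Lambda)}=\sum_{\delta}\mathbf 1_{\overset{\circ}{\delta}\cap(z+\Lambda)}$, so it suffices to fix $\delta$ and $y\in\overset{\circ}{\delta}\cap N$.

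Since $\Lambda$ is the simplicial cone spanned by the basis $(\lambda_i)$, membership $y\in z+\Lambda$ means $\langle\lambda_i^\vee,y\rangle\ge\langle\lambda_i^\vee,z\rangle$ for all $i\in I$. Let $J(y)=\{i\in I : \langle\lambda_i^\vee,y\rangle<\langle\lambda_i^\vee,z\rangle\}$. By definition $y\in\delta(K,z)$ if and only if $K\subset J(y)$. The right-hand side evaluated at $y$ is therefore
\[
\sum_{K\subset J(y)}(-1)^{|K|},
\]
which equals $1$ if $J(y)=\varnothing$ and $0$ otherwise. Since $J(y)=\varnothing$ exactly when $y\in z+\Lambda$, this is $\mathbf 1_{z+\Lambda}(y)$, as required.

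There is no real obstacle. The only points to check are, first, that the decomposition of $\Upsilon$ into relative interiors of cones of the regular fan $\Delta$ is disjoint and covers $\Upsilon$, which is standard. Second, each $\delta(K,z)\cap N$ must give a well-defined element of the completed monoid algebra: it is contained in $\Upsilon\cap N\subset\Lambda\cap N$, so the series makes sense. Third, the sum over $(\delta,K)$ is finite, which holds because $\Delta$ and $I$ are finite, so the rearrangement of summation is harmless.
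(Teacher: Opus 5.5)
Your proof is correct and matches the paper's approach: the pointwise inclusion–exclusion $\sum_{K\subset J(y)}(-1)^{|K|}=1$ if $J(y)=\varnothing$ and $0$ otherwise, applied on each relative interior $\overset{\circ}{\delta}$ and combined with $z+\Lambda=\{y : \langle\lambda_i^\vee,y-z\rangle\geq 0 \text{ for all } i\}$, is exactly the content of the statement. The paper gives no separate proof and simply defers to \cite[Lemme 6.5]{bourqui2011MAMS}, whose argument is this same inclusion–exclusion on indicator functions; it transfers unchanged to the motivic series because all coefficients are integer multiples of $1$.
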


We will also reemploy the following (see \cite[p.~123]{bourqui2011MAMS} for a proof).

\begin{lemma}[{\cite[Lemme 6.5]{bourqui2011MAMS}}]
\label{lemma:cones-decomposition-and-bounds}
    Let $\delta $ be a cone of $\Delta$ and $K \subset I$. 
    \begin{enumerate}
        \item There is a decomposition 
        \[
            \delta ( K , z ) = 
                \left ( 
                    \overset{\circ}{\delta_K} \cap N
                \right )
                \oplus 
                \left ( 
                    \delta^K ( K , z ) \cap N 
                \right ) .
        \]
        \item The set $  \delta^K ( K , z ) \cap N $ is finite, of cardinality bounded by
        \[
            \langle z , \lambda^\vee \rangle^{\rk ( N )} .
        \]
        \item Moreover, for any $y \in \delta^K ( K , z ) \cap N$ we have 
        \[
        \langle y , \lambda^\vee \rangle 
            \leqslant
        | I | \cdot \langle z , \lambda^\vee \rangle 
        \cdot 
        \mathrm{supp}_{\ell \in \Delta ( 1 )} \langle \rho_\ell , \lambda^\vee \rangle .
        \]
        \item If $K$ is empty, then $\delta ( 1 )_K = \delta ( 1 )$
        and $\delta^K ( K , z ) \cap N = \{ 0 \}$. 
    \end{enumerate}
\end{lemma}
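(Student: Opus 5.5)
The statement to prove is \cref{lemma:cones-decomposition-and-bounds}: the decomposition of $\delta(K,z)$ and the bounds on $\delta^K(K,z)\cap N$. These are purely combinatorial facts about lattice points in a regular cone, so the plan is to argue directly in coordinates, as in Bourqui's proof. The motivic setting plays no role here.

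\emph{Coordinates.} Since $\Delta$ is regular, $\delta(1)$ is part of a basis of $N$. Hence every $y\in\overset{\circ}{\delta}\cap N$ can be written uniquely as $y=\sum_{\ell\in\delta(1)}n_\ell\rho_\ell$ with all $n_\ell\geqslant 1$ integers. Split this sum as $y=y_K+y^K$, where $y_K$ collects the rays in $\delta(1)_K$ and $y^K$ collects the remaining rays.

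\emph{Part (1).} For $i\in K$ and $\ell\in\delta(1)_K$ we have $\langle\lambda_i^\vee,\rho_\ell\rangle=0$. Therefore the condition $\langle\lambda_i^\vee,y\rangle<\langle\lambda_i^\vee,z\rangle$ for $i\in K$ depends only on $y^K$. So $y\in\delta(K,z)$ if and only if $y_K\in\overset{\circ}{\delta_K}\cap N$ and $y^K\in\delta^K(K,z)\cap N$. Uniqueness of the coordinates makes the map $(y_K,y^K)\mapsto y$ a bijection, which gives the direct sum decomposition.

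\emph{Parts (2) and (3).} Every ray $\rho_\ell$ lies in $\Upsilon\subset\Lambda$, so all $\langle\lambda_i^\vee,\rho_\ell\rangle$ are nonnegative integers. For $\ell\notin\delta(1)_K$ there is some $i\in K$ with $\langle\lambda_i^\vee,\rho_\ell\rangle\geqslant 1$. Take $y\in\delta^K(K,z)$ and this $i$. Then
\[
n_\ell\leqslant\langle\lambda_i^\vee,y\rangle<\langle\lambda_i^\vee,z\rangle\leqslant\langle z,\lambda^\vee\rangle .
\]
There are at most $\rk(N)$ such coefficients $n_\ell$, each taking at most $\langle z,\lambda^\vee\rangle$ values. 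This gives the cardinality bound in (2).

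For (3), compute
\[
\langle y,\lambda^\vee\rangle=\sum_\ell n_\ell\langle\rho_\ell,\lambda^\vee\rangle\leqslant|\delta(1)|\cdot\langle z,\lambda^\vee\rangle\cdot\sup_\ell\langle\rho_\ell,\lambda^\vee\rangle .
\]
Since $\delta(1)$ is part of a basis of $N$, we have $|\delta(1)|\leqslant\rk(N)=|I|$, which yields the stated bound.

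\emph{Part (4).} If $K=\emptyset$, the defining condition for $\delta(1)_K$ is vacuous, so $\delta(1)_K=\delta(1)$. Then $\delta^K$ is the zero cone, and $\delta^K(K,z)\cap N=\{0\}$.

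\emph{Main obstacle.} The only delicate point is (1) when $\delta^K$ is the zero cone or $\delta_K$ is the zero cone. This requires the convention that the relative interior of the zero cone is $\{0\}$, so that the corresponding factor contributes only the origin; this matches (4). Everything else is bookkeeping.
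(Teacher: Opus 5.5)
Your proof is correct. It is the standard coordinate argument: regularity of $\Delta$ gives the unique expansion $y=\sum n_\ell\rho_\ell$ with integers $n_\ell\geqslant 1$, and $\Upsilon\subset\Lambda$ gives nonnegativity of the $\langle\lambda_i^\vee,\rho_\ell\rangle$. The paper gives no proof of its own, only the reference \cite[p.~123]{bourqui2011MAMS}, and your argument is the natural one for this statement.

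One small caveat concerns (2). Passing from $c\leqslant\rk(N)$ free coefficients to the bound $\langle z,\lambda^\vee\rangle^{\rk(N)}$ silently uses $\langle z,\lambda^\vee\rangle\geqslant 1$ when $c=0$. In fact, for $K=\emptyset$ and $z=0$ the set is $\{0\}$ by (4), while the stated bound is $0$. So (2) as stated needs $K\neq\emptyset$ or $z\neq 0$. This is harmless, since the lemma is only applied with $J\neq\emptyset$ in \cref{lemma-control-L-functions-by-integration}.
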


\begin{lemma}[{\cite[Lemme 6.6]{bourqui2011MAMS}}]
\label{lemma:proper-subcones}
Let $\delta $ be a cone of $\Delta$. Assume that
\begin{enumerate}
    \item $\delta$ has maximal dimension,
    \item $K$ is a non-empty subset of $I$,
    \item and $( N / \langle \Upsilon \rangle )^\vee \cap \lambda^\vee = \{ 0 \}$.
\end{enumerate}
Then $\delta ( 1 )_K$ is a \emph{proper} subset of $\delta ( 1 )$. 
\end{lemma}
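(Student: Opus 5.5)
The plan is a short argument by contradiction: assume that $\delta(1)_K$ is not a proper subset of $\delta(1)$, and produce a nonzero linear form vanishing on $\Upsilon$ that hypothesis~(3) forbids. Since $\delta(1)_K \subset \delta(1)$ by definition, the failure of properness means $\delta(1)_K = \delta(1)$. Concretely, for every ray $\ell \in \delta(1)$ and every $i \in K$ we then have $\langle \lambda_i^\vee , \rho_\ell \rangle = 0$.

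The first step is to use hypothesis~(1). Here ``maximal dimension'' means $\dim(\delta) = \dim(\Upsilon)$, which is the largest dimension of a cone of the regular fan $\Delta$ whose support is $\Upsilon$. Because $\delta \subset \Upsilon$ and the two have the same dimension, the generators $(\rho_\ell)_{\ell \in \delta(1)}$ span the real vector space $\langle \Upsilon \rangle_{\mathbf R}$ spanned by $\Upsilon$. Consequently, for each $i \in K$ the linear form $\lambda_i^\vee$ vanishes on all of $\langle \Upsilon \rangle_{\mathbf R}$. Since $\lambda_i^\vee \in N^\vee$, it factors through $N/\langle \Upsilon \rangle$, so it defines an element of $(N/\langle \Upsilon \rangle)^\vee$.

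The second step is to use hypothesis~(2) together with hypothesis~(3). Since $K$ is non-empty, we may pick some $i \in K$. The form $\lambda_i^\vee$ is a nonzero element of $(N/\langle \Upsilon \rangle)^\vee$, being a member of a basis of $N^\vee$. It is also a nonnegative combination of the $\lambda_j^\vee$. We could equally take $\sum_{i \in K} \lambda_i^\vee$, a partial sum of $\lambda^\vee = \sum_{i \in I} \lambda_i^\vee$, which is likewise nonzero and vanishes on $\Upsilon$.

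The main obstacle is interpreting hypothesis~(3). The notation ``$\cap\, \lambda^\vee$'' should be read in Bourqui's sense, namely as the intersection with the closed cone generated by the $\lambda_i^\vee$ (the dual cone $\Lambda^\vee$). Under this reading, hypothesis~(3) says precisely that no nonzero element of $\Lambda^\vee$ vanishes identically on $\Upsilon$. The form produced in the second step is exactly such an element, which gives the contradiction. Hence $\delta(1)_K \subsetneq \delta(1)$.
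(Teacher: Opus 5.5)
Your argument is correct and complete. If $\delta(1)_K=\delta(1)$, the rays of the maximal-dimensional cone $\delta$ span $\langle\Upsilon\rangle_{\RR}$, so any $\lambda_i^\vee$ with $i\in K$ is a nonzero element of $(N/\langle\Upsilon\rangle)^\vee\cap\Lambda^\vee$, which is a contradiction. The paper gives no proof of its own and simply quotes Bourqui's Lemme 6.6; your contradiction argument is a valid proof of that lemma.

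Your reading of hypothesis (3), with the cone $\Lambda^\vee$ in place of the single vector $\lambda^\vee$, is the right one. It matches the form $\Gamma^\vee\cap\Lambda^\vee=\{0\}$ in which the hypothesis is used later in the paper. With the literal vector (or the line $\ZZ\lambda^\vee$) the statement fails: take $N=\ZZ^2$, $\Upsilon=\RR_{\geq 0}\lambda_1$ and $K=\{2\}$.
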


\subsubsection{Convergence lemmas}

\begin{lemma}
    \label{lemma-control-L-functions-by-integration}
    Let $\mathfrak a$ be a motivic function on $\Lambda \cap N$
    relatively to a scheme $S$
    and $\varepsilon >0$ be 
    such that 
    $\mathfrak L_{N, \Lambda , \mathfrak a }$
    converges for $| \mathbf T| < \mathbf L^\varepsilon$. 
    In particular, 
    it implies that 
    the sum 
    \[
    \mathfrak L_{N,\Lambda , \mathfrak a } ( \bm 1 )
    =
    \sum_{y \in \Lambda \cap N} 
    \mathfrak a ( y ) 
    \]
    is well-defined
    in the dimensional completion. 
    
    Let 
    \[
    f_1 ( \mathbf T ) 
    = 
    \int_{D ( \underline{\Gamma}_S )}
    \mathfrak L_{N, \Lambda , \mathfrak a} 
    ( \hat j ( \bm \chi ) \mathbf T )
    \mathfrak L_{N , \Lambda} 
    ( \hat j ( \bm \chi ) \mathbf T )
    \mathrm d \bm \chi  
    \in \KVar S [[ \mathbf T ]] 
    . 
    \]
    This motivic formal series
    converges for $| \mathbf T | < 1$. 
    
    Assume that $\Gamma^\vee \cap \Lambda^\vee = \{ 0 \} $. 
    Then the series
    \[
        f_1 ( \mathbf T ) 
    -   
        \mathfrak L_{N,\Lambda , \mathfrak a } ( \bm 1 )
        \mathfrak L_{N, \Lambda \cap M_\mathbf R} ( \mathbf T ) 
    \]
    is an admissible motivic function of multiplicity at least 
    $- \rk ( M ) + 1 $. 
\end{lemma}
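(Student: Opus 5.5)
The plan is to follow Bourqui's proof of the analogous classical statement, \cite[Lemme 6.16]{bourqui2011MAMS}, and make every step a formal manipulation of series. First I would expand the product inside the integral:
\[
\mathfrak L_{N,\Lambda,\mathfrak a}(\hat j(\bm\chi)\mathbf T)\,\mathfrak L_{N,\Lambda}(\hat j(\bm\chi)\mathbf T)
=\sum_{z\in\Lambda\cap N}\sum_{y\in\Lambda\cap N}\mathfrak a(z)\,\bm\chi(j(z+y))\,\mathbf T^{z+y}.
\]
Integrating term by term with the orthogonality relation \eqref{equation:orthogonality}, exactly as in the proof of the ``forme jouet'' lemma above, gives
\[
f_1(\mathbf T)=\sum_{z\in\Lambda\cap N}\mathfrak a(z)\,\mathfrak L_{N,(z+\Lambda)\cap M_\mathbf R}(\mathbf T).
\]
Convergence for $|\mathbf T|<1$ is immediate from this expression. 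For a fixed degree $y+z$ there are only finitely many pairs $(z,y)$, and by hypothesis $\dim\mathfrak a(z)$ is bounded above by $-\varepsilon\langle\lambda^\vee,z\rangle$ (up to a constant). So each coefficient of $f_1$ is a finite sum whose dimension is controlled, and the number of lattice points in a degree slice grows only polynomially, which is absorbed as soon as $|\mathbf T|<1$.

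Next I would subtract the main term. Writing $\Upsilon=\Lambda\cap M_\mathbf R$ as the support of a regular fan $\Delta$, we have
\[
f_1-\mathfrak L_{N,\Lambda,\mathfrak a}(\bm 1)\,\mathfrak L_{N,\Upsilon}
=\sum_z\mathfrak a(z)\bigl(\mathfrak L_{N,\Upsilon\cap(z+\Lambda)}-\mathfrak L_{N,\Upsilon}\bigr).
\]
Applying \cref{lemma:decomposition-of-L-function-cone}, the $K=\emptyset$ terms recover $\mathfrak L_{N,\Upsilon}$ by part (4) of \cref{lemma:cones-decomposition-and-bounds}, since $\Upsilon$ is the disjoint union of the open cones $\overset{\circ}{\delta}$. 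What remains is a sum over $\delta\in\Delta$ and nonempty $K\subset I$ of $(-1)^{|K|}\sum_z\mathfrak a(z)\mathfrak L_{N,\delta(K,z)}$. By part (1) of \cref{lemma:cones-decomposition-and-bounds}, each such term factors as
\[
\mathfrak L_{N,\overset{\circ}{\delta_K}}(\mathbf T)\cdot\sum_z\mathfrak a(z)\sum_{y\in\delta^K(K,z)\cap N}\mathbf T^y .
\]
Call the second factor $g_{\delta,K}$. I then need to check two things. First, $\dim\delta_K\le\rk(M)-1$: this is clear if $\delta$ is not maximal, and if $\delta$ is maximal it is \cref{lemma:proper-subcones}, whose third hypothesis is the assumption $\Gamma^\vee\cap\Lambda^\vee=\{0\}$ (via $\lambda^\vee\in\Lambda^\vee$, adjusting the statement accordingly). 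Second, $g_{\delta,K}$ is an elementary admissible function of non-negative multiplicity. Together these make the difference a finite sum of admissible motivic functions of multiplicity at least $-\rk(M)+1$.

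The main obstacle is the convergence of $g_{\delta,K}$ for $|\mathbf T|<\LL^{\varepsilon'}$ with some $\varepsilon'>0$. I would use parts (2) and (3) of \cref{lemma:cones-decomposition-and-bounds}. The number of $y$ attached to a given $z$ is at most $\langle z,\lambda^\vee\rangle^{\rk N}$, a polynomial that does not affect dimensions since it is only a multiplicity in a finite sum. The degree of each such $y$ is at most $C\langle z,\lambda^\vee\rangle$. Combining this with $\dim\mathfrak a(z)\le-\varepsilon\langle z,\lambda^\vee\rangle+O(1)$, the coefficient of $\mathbf T^y$ has dimension at most $-(\varepsilon/C)\langle y,\lambda^\vee\rangle+O(1)$, so $\varepsilon'=\varepsilon/C$ should work. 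The care needed is to check that summing over the possibly infinitely many $z$ contributing to a fixed $y$ still converges in $\widehat{\MMM_S}$, since $y$ determines only a lower bound on $z$. This follows because the dimensions $\dim\mathfrak a(z)$ tend to $-\infty$ while the multiplicities grow only polynomially.
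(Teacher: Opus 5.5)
Your proposal is correct and follows essentially the same route as the paper's proof. Orthogonality turns $f_1$ into $\sum_{z}\mathfrak a(z)\,\mathfrak L_{N,\Lambda\cap M\cap(z+\Lambda)}(\mathbf T)$. Then \cref{lemma:decomposition-of-L-function-cone} and \cref{lemma:cones-decomposition-and-bounds} isolate the main term at $K=\varnothing$, and for $K\neq\varnothing$ the bounds (2)--(3) give a factor of non-negative multiplicity, while \cref{lemma:proper-subcones} (via $\Gamma^\vee\cap\Lambda^\vee=\{0\}$, applied to the individual $\lambda_i^\vee$ with $i\in K$ rather than to $\lambda^\vee$) caps the pole order of $\mathfrak L_{N,\overset{\circ}{\delta_K}}$ at $\rk(M)-1$.
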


\begin{proof}
    First, 
    one shows that 
    \[
    f_1 ( \mathbf T )
    =
    \sum_{y_1 \in \Lambda \cap N}
    \mathfrak a ( y_1 ) 
    \mathfrak L_{N, \Lambda \cap M \cap ( y_1 + \Lambda )} ( \mathbf T ) .
    \]
    Indeed, exactly as in the proof of 
    \cite[Proposition 6.15]{bourqui2011MAMS}, we can write 
    \begin{align*}
        f_1 ( \mathbf T ) 
        & =
        \int_{D ( \underline{\Gamma}_S )}
     \mathfrak L_{N, \Lambda , \mathfrak a} 
     ( \hat j ( \bm \chi ) \mathbf T )
    \mathfrak L_{N , \Lambda} 
    ( \hat j ( \bm \chi ) \mathbf T )
    \mathrm d \bm \chi  
        \\
        & = 
        \sum_{(y_0,y_1)\in ( \Lambda \cap N )^2} 
        \mathfrak a ( y_1 )
        \mathbf T^{y_0 + y_1} 
        \int_{D ( \underline{\Gamma}_S )}
            \bm \chi ( j ( y_0 + y_1 ) )
            \mathrm d \bm \chi 
        \\
        & = 
        \sum_{y_1\in ( \Lambda \cap N )} 
        \mathfrak a ( y_1 )
        \sum_{
        \substack{
        y_0 \in \Lambda \cap N \\ j ( y_0 + y_1 ) = 0 }
        } 
        \mathbf T^{y_0 + y_1} \\
        & = 
        \sum_{y_1\in ( \Lambda \cap N )} 
        \mathfrak a ( y_1 )
        \sum_{
        \substack{
        y \in  \Lambda \cap M \\ y \in y_1 + \Lambda \cap N  }
        } 
        \mathbf T^{y} \\
        & =  \sum_{y_1 \in \Lambda \cap N}
         \mathfrak a ( y_1 ) 
    \mathfrak L_{N, \Lambda \cap M \cap ( y_1 + \Lambda )} ( \mathbf T ) . 
    \end{align*}
    One can write $\Upsilon = \Lambda \cap M_\mathbf R$
    as the support of a regular fan $\Delta$
    and use \cref{lemma:decomposition-of-L-function-cone}
    to get 
    \[
    f_1 ( \mathbf T ) 
    =
    \sum_{\delta \in \Delta}
    \sum_{J \subset I}
    ( - 1 )^{|J|}
    \mathfrak L_{N , \overset{\circ}{\delta_J}} ( \mathbf T )
    \sum_{y_1\in \Lambda \cap N}
    \mathfrak a ( y_1 ) 
    \mathfrak L_{N, \delta^J ( J , y_1 )} ( \mathbf T ) . 
    \]
    The main term of this series is given by $J = \varnothing$
    and equals 
    \[
    \mathfrak L_{N , \Lambda , \mathfrak a} ( \bm 1 ) 
    \mathfrak L_{M , \Lambda \cap M_\mathbf R} ( \mathbf T ) . 
    \]

    If $\delta$ and $J \neq 0$ are given,
    we know by \cref{lemma:cones-decomposition-and-bounds}
    that 
    for any $y_1\in \Lambda \cap N$
    there are at most 
    $ \langle y_1 , \lambda^\vee \rangle^{ \rk ( N ) }$
    elements in 
    $\delta^J ( J , y_1 )$
    and that for any such element $y \in \delta^J ( J , y_1 )$,
    the scalar product $\langle y , \lambda^\vee \rangle$
    is bounded by $
        | I | \cdot \langle y_1 , \lambda^\vee \rangle 
             \cdot 
        \supp_{\ell \in \Delta ( 1 )} \langle \rho_\ell , \lambda^\vee \rangle 
    $. 
    It means in particular that the series 
    \[
        \sum_{y_1 \in \Lambda \cap N}
        a_{y_1} \mathfrak L_{N , \delta^J ( J , y_1 )} ( \mathbf T )
    \]
    is an elementary admissible function of positive multiplicity. Finally,
    since we assume that $\Gamma^\vee \cap \Lambda^\vee = \{ 0 \}$,
    \cref{lemma:proper-subcones}
    ensures that $\delta ( 1 )_J$ is a proper subset of 
    $\delta ( 1 )$.
    This implies that poles of $\mathfrak L_{N , \overset{\circ}{\delta_K}} ( T )$
    are controlled up to order $- ( \rk ( M ) - 1)$
    and finally that 
    \[
        \mathfrak L_{N , \overset{\circ}{\delta_J}} ( \mathbf T )
    \sum_{y_1\in \Lambda \cap N}
    \mathfrak a ( y_1 ) 
    \mathfrak L_{N, \delta^J ( J , y_1 )} ( \mathbf T )
    \]
    is admissible of multiplicity at least $- ( \rk ( M ) - 1)$. 
\end{proof}

Since the order of the pole of 
$ \mathfrak L_{M , \Lambda \cap M_\mathbf R} ( \mathbf T ) $
is exactly $\rk ( M )$, we get:

\begin{cor}
    \label{cor-control-L-functions-by-integration--residue}
    Let $\lambda_0$ be in the interior of $\Lambda^\vee$. Then 
    \[
     \left ( 
     ( 
        1 - T^{a_{\lambda_0}} 
    )^{\rk ( M )}
    f_1 \left ( T^{\lambda_0} \right ) 
    \right )_{T = 1} 
    = 
    a_{\lambda_0}^{\rk ( M )}
    \mathfrak L_{N,\Lambda , \mathfrak a } ( \bm 1 )
    \mathfrak X_{M^\vee , i^\vee ( \Lambda^\vee ) } ( i^\vee ( \lambda_0 ) ) . 
    \]
\end{cor}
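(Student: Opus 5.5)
We prove the corollary from the decomposition of \cref{lemma-control-L-functions-by-integration},
\[
f_1(\mathbf T) = \mathfrak L_{N,\Lambda,\mathfrak a}(\bm 1)\,\mathfrak L_{N,\Lambda\cap M_\mathbf R}(\mathbf T) + R(\mathbf T),
\]
where $R$ is admissible of multiplicity at least $-\rk(M)+1$. We specialise $\mathbf T = T^{\lambda_0}$, multiply by $(1-T^{a_{\lambda_0}})^{\rk(M)}$, and evaluate each term at $T=1$.

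\textbf{Main term.} By the identification preceding Bourqui's toy lemma, $\mathfrak L_{N,\Lambda\cap M_\mathbf R}(\mathbf T) = \mathfrak L_{M,\Lambda\cap M_\mathbf R}(i^\vee(\mathbf T))$. Substituting $T^{\lambda_0}$ turns this into $\mathfrak L_{M,\Upsilon}(T^{i^\vee(\lambda_0)})$ with $\Upsilon = \Lambda\cap M_\mathbf R$. We then check two facts.
\begin{itemize}
\item Since $\lambda_0$ is in the interior of $\Lambda^\vee$, $i^\vee(\lambda_0)$ lies in the interior of $\Upsilon^\vee = i^\vee(\Lambda^\vee)$.
\item $\dim \Upsilon = \rk(M)$. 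This follows from the hypothesis $\Gamma^\vee\cap\Lambda^\vee=\{0\}$: the dual statement is that $M_\mathbf R$ meets the interior of $\Lambda$, i.e. $\Lambda\cap M_\mathbf R$ is full-dimensional in $M_\mathbf R$.
\end{itemize}
The lemma on $\mathfrak L$-functions of cones (with $N$ replaced by $M$) then gives that $\bigl((1-T^{a})^{\rk(M)}\mathfrak L_{M,\Upsilon}(T^{i^\vee(\lambda_0)})\bigr)_{T=1} = a^{\rk(M)}\mathfrak X_{M^\vee,i^\vee(\Lambda^\vee)}(i^\vee(\lambda_0))$. We take $a = a_{\lambda_0}$, choosing it (as an lcm over rays of a regular fan of $\Upsilon$) so that it is compatible with that lemma; if needed we replace $a_{\lambda_0}$ by a common multiple, which only rescales consistently. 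The constant $\mathfrak L_{N,\Lambda,\mathfrak a}(\bm 1)$ is well defined in the completion by convergence on $|\mathbf T|<\LL^{\varepsilon}$.

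\textbf{Error term.} We show that $(1-T^{a})^{\rk(M)}R(T^{\lambda_0})$ vanishes at $T=1$. By linearity we reduce to an elementary piece $g(\mathbf T)\,\mathfrak L_{N',\overset{\circ}{\Upsilon'},\mathfrak b}(\mathbf T)$ with $\dim\Upsilon'\le \rk(M)-1$.
\begin{itemize}
\item The factor $g$ converges beyond $|\mathbf T|=1$, so $g(T^{\lambda_0})$ is regular at $T=1$.
\item In the proof of \cref{lemma-control-L-functions-by-integration}, the factor with a pole is $\mathfrak L_{N,\overset{\circ}{\delta_J}}$, a product over rays of factors $\frac{T^{\langle\lambda_0,\rho_\ell\rangle}}{1-T^{\langle\lambda_0,\rho_\ell\rangle}}$. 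Its pole order at $T=1$ is at most $\dim\delta_J\le\rk(M)-1$.
\end{itemize}
Multiplying by $(1-T^a)^{\rk(M)}$ therefore leaves at least one factor $(1-T^{a})$ times a function regular at $1$, which vanishes at $T=1$.

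\textbf{Main obstacle.} The delicate step is this vanishing statement, which must hold in the completed Grothendieck ring. We need the product to converge in a neighbourhood of $T=1$ (a disc $|T|<\LL^{\eta}$) so that evaluation at $T=1$ is meaningful. We also need the "admissible" series, written via \cref{lemma:cones-decomposition-and-bounds}, to really factor as a convergent series times a rational function with controlled poles. We handle this by working with the explicit expression from the proof of \cref{lemma-control-L-functions-by-integration}. The polynomial bound $\langle y_1,\lambda^\vee\rangle^{\rk(N)}$ on the number of terms and the linear bound on their degrees show that the factor $\sum_{y_1}\mathfrak a(y_1)\mathfrak L_{N,\delta^J(J,y_1)}$ converges on $|\mathbf T|<\LL^{\varepsilon'}$ for some $0<\varepsilon'<\varepsilon$ (after shrinking $\varepsilon$ by a factor depending on $|I|\sup\langle\rho_\ell,\lambda^\vee\rangle$). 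With this convergence in hand, the pole-order count above gives the vanishing.
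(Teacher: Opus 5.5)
Your proposal is correct and follows the paper's argument. The paper deduces the corollary in one line from \cref{lemma-control-L-functions-by-integration}: the main term $\mathfrak L_{N,\Lambda,\mathfrak a}(\bm 1)\,\mathfrak L_{M,\Lambda\cap M_\mathbf R}$ has a pole of order exactly $\rk(M)$, while the remainder is admissible of multiplicity at least $-\rk(M)+1$. You supply the details the paper leaves implicit, namely the full-dimensionality of $\Lambda\cap M_\mathbf R$ coming from $\Gamma^\vee\cap\Lambda^\vee=\{0\}$, the divisibility requirement on $a_{\lambda_0}$, and convergence in a neighbourhood of $T=1$.
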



\subsection{Application to the motivic height zeta function}
In our application, 
the short exact sequence 
\[
0 \longrightarrow M 
\overset{i}{\longrightarrow}
N
\overset{j}{\longrightarrow}
\Gamma 
\longrightarrow 
0  
\]
is given by the short exact sequence of $\ZZ$-modules
\[ 
0 \longrightarrow\Pic ( X_\Sigma )^{\vee} \overset{\pi^{\vee}}{\longrightarrow} \PL ( \Sigma )^{\vee} \overset{\gamma^{\vee}}{\longrightarrow} \mathcal X_*(U)\longrightarrow 0 
\]
dual to \eqref{equ:fundamental-exact-sequence-toric},
and $\Lambda = ( \PL ( \Sigma )^\vee_+)_\mathbf R$.  
We also get for every scheme $S$ an exact sequence of diagonalisable algebraic groups (here split tori)
\[
1
\longrightarrow 
D ( \underline{\mathcal X_*(U)}_S ) 
\overset{D ( \gamma^\vee ) }{\longrightarrow} 
D ( \underline{ \PL ( \Sigma )^\vee}_S ) 
\overset{D ( \pi ^\vee )}{\longrightarrow} 
D ( \underline{\Pic ( X_\Sigma )^\vee}_S ) 
\longrightarrow 
1 .
\] Our cone $\Lambda\subset N_\mathbf R$ will be the cone generated by the dual basis $\mathfrak{D}_{\alpha}^{\vee}$, i.e. $\Lambda = \CEff(X_{\Sigma})^{\vee}$.

Recall  from (\ref{equ:zeta_function_after_splitting}) that we have the expression 
  \[
\zeta_H^{\mathrm{mot}} ( \mathbf T )
=
\left (
    \mathbf L - 1 
\right )^n 
\int_{D ( \mathcal X_* ( U ) ) }
\int_{(\PDiv^0 ( \mathscr C ) ^n)^\perp  }
 \prod_{v \in \CCC } 
    \frac{Q_\Sigma ( ( \bm \chi_v ( \rho_\alpha ) \langle \bm z , \rho_\alpha \rangle T_\alpha )_{\alpha \in \Sigma ( 1 )} ) }{\prod_{\alpha \in \Sigma ( 1 ) } ( 1 - \bm \chi_v ( \rho_ \alpha ) \langle \bm z , \rho_\alpha \rangle T_\alpha )} 
    \mathrm d \bm \chi \mathrm d \bm z   
\]
where we write $\bm \chi_v ( \rho_\alpha )$ for the fibre above $v\in \CCC$ of the algebraic cocharacter corresponding to~$\rho_\alpha$, 
that is to say
\[
\bm \chi ( \rho_\alpha ) 
=
\ev ( \rho_\alpha , \scdot )
= 
\left [
    \bm \chi \in 
    D ( \underline{\mathcal X_* ( U )}_\CCC ) 
    \mapsto 
    \bm \chi( \rho_\alpha ) \in \mathbf G_m 
\right ]
\in \KCharVar{\mathcal X_* ( U )}{\CCC}
,
\]
to which the motivic Euler product is applied,
and similarly $\langle \bm z , \rho_\alpha \rangle $ for the motivic class
of the algebraic cocharacter 
\[
\langle \bm z , \rho_\alpha \rangle 
=
\ev ( \rho_\alpha , \scdot ) 
=
\left [
    \bm z \in D ( \underline{\mathcal X_* ( U )} ) \simeq D ( \underline{ ( \mathbf Z \mathfrak D_1 )^n } )
    \mapsto
    \bm z ( \rho_\alpha )
\right ]
\in \KCharVar{\mathcal X_* ( U )}{k} .
\]
Using multiplicativity of motivic Euler products, we have 
\begin{align*}
   &  \prod_{v \in \CCC } 
    \frac{Q_\Sigma ( ( \bm \chi_v ( \rho_\alpha ) \langle \bm z , \rho_\alpha \rangle T_\alpha )_{\alpha \in \Sigma ( 1 )} )}{\prod_{\alpha \in \Sigma ( 1 ) } ( 1 - \bm \chi_v ( \rho_ \alpha ) \prod_\alpha \langle \bm z , \rho_\alpha \rangle T_\alpha )} \\
& =
\left ( \prod_{\alpha \in \Sigma ( 1 ) } L ( \bm \chi ( \rho_\alpha ) \langle \bm z , \rho_\alpha \rangle  , T_\alpha ) \right )
\prod_{v\in \CCC} Q_\Sigma ( (  \bm \chi_v ( \rho_\alpha ) \langle \bm z , \rho_\alpha \rangle T_{\alpha} )_{\alpha \in \Sigma ( 1 )} ) 
\end{align*}
which is our motivic analogue of expression (4.3.22) in \cite[Lemme 4.44]{bourqui2011MAMS}.

To apply the results from the previous section,
let us formally define $P ( \mathbf T \langle \bm z , \rho \rangle )$ as 
\[
\left ( 
\prod_{\alpha \in \Sigma ( 1 )}
( 1 - T_\alpha )
\right )
\int_{( \PDiv^0 ( \CCC )^n ) ^\perp}
\left ( \prod_{\alpha \in \Sigma ( 1 ) } L ( \bm \chi ( \rho_\alpha ) \langle \bm z , \rho_\alpha \rangle  , \mathbf L^{-1} T_\alpha ) \right )
\left ( 
\prod_{v\in \mathscr C}
Q_{\Sigma} 
( \bm \chi_v ( \rho_\alpha ) \langle \bm z , \rho_\alpha \rangle \mathbf L^{-1} T_{\alpha , v} )
\right )
\mathrm d \bm \chi 
\]
where we normalise by $\LL$ each indeterminate $T_\alpha$
to later apply \cref{lemma-control-L-functions-by-integration} more easily, 
so that 
\[
\zeta_H^{\mathrm{mot}} ( \mathbf T )
=
( \mathbf L - 1 )^n
\int_{D ( \underline{\mathcal X_* ( U )}_k ) } 
\mathfrak L_{ \mathbf Z^{\Sigma ( 1 )} , \CEff ( X )^\vee } ( ( \langle \bm z , \rho_\alpha \rangle T_\alpha )_{\alpha \in \Sigma ( 1 ) } )  
    P (  ( \langle \bm z , \rho_\alpha \rangle T_{\alpha} )_{\alpha \in \Sigma ( 1 ) } )  
\mathrm d \bm z  
\]
where we 
recall that 
    \[
\mathfrak L_{ \mathbf Z^{\Sigma ( 1 )} , \CEff ( X )^\vee } ( \mathbf T ) 
=
\frac{1}{\prod_{\alpha \in \Sigma ( 1 )} ( 1 - T_\alpha )}  . 
\]

So far we have been working with motivic Euler product defined by (mixed) symmetric products taking (the subdivision \eqref{equation:decomposition-of-cochar-cones-of-Sigma} of) 
$\mathcal X_* ( U ) - \{ \bm 0 \}$ as a set of index.
It is now time to replace them by a motivic Euler product 
with coefficients indexed by $\mathbf N^{\Sigma ( 1 )}$, which are better understood and reappear naturally. 
Thanks to the discussion in \cref{sect:local_integration_operator} applied to the situation of \cref{subsection:splitting-abstract-PDiv}, we can compare the effect of
$\int_{
    ( \PDiv ( \CCC ) ^n )^\perp
    }$
    and 
$\int_{
    ( \PDiv^0 ( \CCC ) ^n )^\perp
    }$ above distinct 
    points of 
    \[
    \coprod_{(\pi_\sigma)_\sigma} \Sym_{/k}^{(\pi_\sigma)_\sigma} ( \CCC )_* 
    \]
    which may not share the same support. 
In particular, one can easily show that the output only depends on the linear class in the group scheme
\[
\HHom_{k-\text{gp}}  ( \underline{\mathcal X^* ( U )}_k , \PPic ( \mathscr C ) ) \simeq \PPic ( \CCC )^n 
\]
by the Abel-Jacobi morphism \eqref{eq:Abel-Jacobi-morphism-for-n-tuples-of-divisors}. 
Moreover, this remark is fully compatible with both the algebraic splitting $\PPic ( \mathscr C ) \simeq \PPic^0 ( \mathscr C ) \oplus \underline{\mathbf Z}_k$
and its abstract counterpart $\Div ( \CCC ) \simeq \Div^0 ( \CCC ) \oplus \mathbf Z \mathfrak D_1 $ that is used fibrewise in \cref{subsection:splitting-abstract-PDiv}. 

Using the definitions from \cref{subsect:multiheights}, 
we have for any $\bm d \in \mathbf N^{\Sigma ( 1 )}$ a commutative diagram 
\[
\begin{tikzcd}
\Sym_{/k}^{\bm d} ( \CCC )_* \rar["{\gamma^\vee}"] \dar & \Sym^{\gamma^\vee ( \bm d )} ( \CCC )_* \dar \\
\HHom_{k-\text{gp}}  ( \underline{\PL ( \Sigma ) }_k , \PPic ( \mathscr C ) )
\rar["{\gamma^\vee}"] &
\HHom_{k-\text{gp}}  ( \underline{\mathcal X^* ( U )}_k , \PPic ( \mathscr C ) )
\end{tikzcd}
\]
and we already described the fibre of the bottom morphism \eqref{equation:gamma-vee-for-Pic-definition} : it is isomorphic to $r$ copies of $\PPic^0 ( \CCC )$. 

Putting all this together, we obtain the following lemma. Note that the class of $\PPic^0 ( \CCC )$ is invertible in $\widehat{\mathscr M_k}^{\dim}$,
since it can be obtained as a special value of $\frac{\prod_{v\in \CCC} ( 1 - T_v )}{(1-T)(1-\mathbf LT)}$.  

\begin{lemma}
\label{lemma:partial-computation-of-P(T)}
For all but finitely many $\bm d  = ( d_\alpha) \in \PL ( \Sigma )^{\vee}_+$,
the coefficient of $\mathbf T^{\langle \cdot, \gamma^\vee ( \bm d ) \rangle }$ in 
\[
\int_{
    ( \PDiv^0 ( \CCC ) ^n )^\perp
    }
\left ( \prod_{\alpha \in \Sigma ( 1 ) } L ( \bm \chi ( \rho_\alpha ) \langle \bm z , \rho_\alpha \rangle  , \mathbf L^{-1} T_\alpha ) \right )
\left ( 
\prod_{v\in \mathscr C}
Q_{\Sigma} 
( \bm \chi_v ( \rho_\alpha ) \langle \bm z , \rho_\alpha \rangle \mathbf L^{-1} T_{\alpha , v} )
\right )
\mathrm d \bm \chi 
\]
equals the coefficient of $\mathbf T^{\bm d}$ in 
\[
\frac{1}{[\PPic^0 ( \CCC )]^n}  
\left ( \prod_{\alpha \in \Sigma ( 1 ) } Z^\Kapr_\CCC ( \langle \bm z , \rho_\alpha \rangle   \mathbf L^{-1} T_\alpha ) \right )
\left ( 
\prod_{v\in \mathscr C}
Q_{\Sigma} 
\right )
( ( \langle \bm z , \rho_\alpha \rangle \mathbf L^{-1} T_\alpha )_\alpha )
\in \CharM{\mathcal X_* ( U )}{k} [[ ( T_\alpha )_\alpha ]]
\]
where in the second expression the motivic Euler products are taken with $\mathbf N^{\Sigma ( 1 )} \setminus \{ \bm 0 \}$ as index set. 
\end{lemma}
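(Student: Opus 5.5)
The plan is to compute the coefficient fibrewise over the support and reduce everything to a count of $r$-tuples of line bundles. First expand the integrand as a mixed motivic Euler product indexed by $\mathcal X_*(U)\setminus\{\bm 0\}$, refined via \eqref{equation:decomposition-of-cochar-cones-of-Sigma}. Its coefficient of a given monomial lives over a disjoint union of mixed symmetric products $\Sym^{(\pi_\sigma)_\sigma}_{/k}(\CCC)_*$, together with a character function of the shape $\bm\chi\mapsto \bm\chi(E)\,\langle\bm z,\cdot\rangle^{\deg}$. Concretely, over a point $\bm E=\sum_v \bm m_v[v]$, the $\bm\chi$-dependent part is the evaluation $\ev(\sum_v \bm m_v v,\bm\chi)$, multiplied by $\LL$-powers and by the $Q_\Sigma$-coefficients, which do not depend on $\bm\chi$.

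Next we rewrite the $L$-factors $L(\bm\chi(\rho_\alpha)\cdots,\LL^{-1}T_\alpha)=\prod_v(\sum_{n}\ev_n\cdots)$. Multiplying them with the $Q_\Sigma$ Euler product gives a single motivic Euler product indexed by $\mathbf N^{\Sigma(1)}\setminus\{\bm 0\}$. By multiplicativity of motivic Euler products, this is compatible with the character structure, as in the construction of \cref{subsection:symproducts_characters}.

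In this form, the coefficient of $\mathbf T^{\bm d}$ lives over $\Sym^{\bm d}_{/k}(\CCC)$, which parametrises $\Sigma(1)$-tuples of effective divisors $(E_\alpha)$. The attached character is $\bm\chi\mapsto\bm\chi(\sum_\alpha \rho_\alpha E_\alpha)$, i.e.\ evaluation at $\gamma^\vee$ of the tuple. We then apply $\int_{(\PDiv^0)^\perp}$ fibrewise using \eqref{equation:orthogonality_with_quotient} and the local operator from \cref{sect:local_integration_operator}, exactly as in the proof of \cref{thm:mot-mult-Poisson-formula}. After the splitting of \cref{subsection:splitting-abstract-PDiv}, this integral keeps precisely the points $(E_\alpha)$ for which $\gamma^\vee(E_\alpha)_\alpha - \deg(\cdot)\mathfrak D_1$ is principal in each of the $n$ components. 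In other words, it keeps those whose image under the composite
\[
\Sym^{\bm d}_{/k}(\CCC)\to \HHom(\underline{\PL(\Sigma)},\PPic(\CCC))\to \PPic(\CCC)^n
\]
is the prescribed point determined by $\gamma^\vee(\bm d)$ and $\mathfrak D_1$. The $\bm z$-factor survives untouched and is responsible for the $\langle \bm z,\rho_\alpha\rangle$ appearing in the answer.

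The integrated coefficient is therefore the class of the fibre of $\Sym^{\bm d}_{/k}(\CCC)\to \PPic(\CCC)^n$ over a fixed point, weighted by the $Q_\Sigma$ data. Without the constraint, the same weighted sum gives the coefficient of $\mathbf T^{\bm d}$ in $\prod_\alpha Z^{\Kapr}_\CCC(\cdots)\prod_v Q_\Sigma$, since $\sum_n \Sym^n(\CCC)T^n = Z^{\Kapr}_\CCC(T)$. To compare the two, use the commutative square preceding the lemma: the map to $\HHom(\underline{\PL(\Sigma)},\PPic(\CCC))\simeq\PPic(\CCC)^{\Sigma(1)}$ sits over $\gamma^\vee$, whose fibres are $(\PPic^0)^r$. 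The constraint singles out a fibre of the composite map $\Sym^{\bm d}\to(\PPic^0)^{\Sigma(1)}\to(\PPic^0)^n$, the last arrow being surjective with kernel $(\PPic^0)^r$.

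The main obstacle is the final comparison. For $d_\alpha>2g-2$ for all $\alpha$ the map $\Sym^{d_\alpha}\CCC\to\PPic^{d_\alpha}$ is a projective bundle, hence piecewise trivial, so every fibre over $\PPic(\CCC)^n$ of $\Sym^{\bm d}$ has class $[\Sym^{\bm d}]/[\PPic^0]^n$. I would first try to handle the $Q_\Sigma$ weights, which a priori break this, by noting that the $Q_\Sigma$-part contributes only finitely many bounded "error" points. I would then stratify by the large-degree remainder, which is again a product of symmetric powers of large degree, and argue piecewise triviality there. The phrase "all but finitely many $\bm d$" would have to absorb those $\bm d$ with some $d_\alpha$ small; if that fails, I would restrict the claim to $\bm d$ far from the walls.
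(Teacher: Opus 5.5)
Your reduction is the same as the paper's. You expand everything as one Euler product over $\mathbf N^{\Sigma(1)}\setminus\{\bm 0\}$. You note that $\int_{(\PDiv^0)^\perp}$ cuts out the fibre over a point of $\Phi\colon\Sym^{\bm d}_{/k}(\CCC)\to\PPic^0(\CCC)^{\Sigma(1)}\xrightarrow{\gamma^\vee}\PPic^0(\CCC)^n$, restricted to the locus of tuples whose local multidegrees lie in cones of $\Sigma$ (which is what $\prod_vQ_\Sigma$ encodes). You then compare with the whole space, using that $\gamma^\vee$ has fibres $\PPic^0(\CCC)^r$ and that $[\PPic^0(\CCC)]$ is invertible.

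The paper simply asserts this last comparison. You correctly isolate it as the crux, but your way through it fails, because $\prod_v Q_\Sigma$ is not a polynomial. $Q_\Sigma-1$ has no linear terms, yet the Euler product has non-zero coefficients $q_{\bm f}$ in arbitrarily large degrees. For $X=\PP^1$ it is $Z^{\Kapr}_\CCC(\LL^{-2}T_+T_-)^{-1}$, an infinite series as soon as $g\ge1$. So the coefficient of $\mathbf T^{\bm d}$ is a sum over $\bm e+\bm f=\bm d$ of $q_{\bm f}$ against $\Sym^{\bm e}_{/k}(\CCC)$. For \emph{every} $\bm d$, however far from the walls, this sum contains terms with some $e_\alpha\le 2g-2$. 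There $\Sym^{e_\alpha}\CCC\to\PPic^{e_\alpha}(\CCC)$ is not a projective bundle, and the fibres of $\Phi$ depend on the point. There is no finite set of ``bounded error points'' to discard, and restricting to $\bm d$ far from the walls does not remove these terms.

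These terms also do not cancel, so no refinement of your argument yields the exact identity. Take $X=\PP^1$ and $\bm d=(d,d)$, and split off the common part of $(E_+,E_-)$. With $c_j=0$ for $j<0$, this gives
\[
\mathrm{LHS}-\mathrm{RHS}=\LL^{-2d}\sum_{a=0}^{2g-2}c_{d-a}\Bigl(F_a-\frac{[\Sym^a\CCC]^2}{[\PPic^0(\CCC)]}\Bigr),
\]
where $c_j$ is the coefficient of $t^j$ in $Z^{\Kapr}_\CCC(t)^{-1}$ and $F_a$ is the class of $\{(E,E')\in(\Sym^a\CCC)^2 : E\sim E'\}$.
\begin{itemize}
\item For $g=1$ the difference equals $\LL^{-2d}c_d\,(1-[\PPic^0(\CCC)]^{-1})$. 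Under the $E$-polynomial realisation, $c_d\mapsto-(u-1)(v-1)\frac{u^d-v^d}{u-v}\neq0$ for $d\ge1$. So the identity fails for every $d$.
\item In character terms: a non-trivial character of $\PPic^0$ has polynomial $L$-factors, but the $\bm\chi$-twisted Euler product of $Q_\Sigma$ is an infinite series, so such characters contribute in all degrees.
\item Separately, for $n\ge2$ and $\bm d=D e_\alpha$, the fibre is $|D\mathfrak D_1|\simeq\PP^{D-g}$ while the right-hand side is $[\PP^{D-g}][\PPic^0(\CCC)]^{1-n}$. This is because $\Phi$ lands in the proper subtorus $\rho_\alpha\otimes\PPic^0(\CCC)$. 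So ``all but finitely many'' cannot hold either.
\end{itemize}

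What your argument does give is exact agreement of the terms with all $e_\alpha>2g-2$. The remaining terms have $|\bm f|\ge\min_\alpha d_\alpha-2g+2$. Since $\prod_vQ_\Sigma$ converges for $|\mathbf T|<\LL^{-1/2}$, they contribute an error of virtual dimension at most about $-\tfrac12\min_\alpha d_\alpha$ after the $\LL^{-1}$ normalisation. The statement has to be weakened to an estimate of this kind, and the later convergence arguments must absorb that error.
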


\begin{remark}
Note that the relation 
\begin{equation}
\label{equation:def-of-Q-Sigma}
\sum_{\sigma \in \Sigma}
    \prod_{\alpha \in \sigma ( 1 )}
    \frac{X_\alpha}{1 - X_\alpha}
    =
    \frac{
    Q_{\Sigma} ( \mathbf X )
    }
    {
    \prod_{\alpha \in \Sigma ( 1 ) } ( 1 - X_\alpha ) 
    } 
\end{equation}
forces the coefficients of this latter motivic Euler product to be actually mixed symmetric product with respect to the collection of subsets $I_\sigma \subset \mathbf N^{\Sigma ( 1 )}$. 
\end{remark}

\begin{lemma} \label{lemma:convergence-Euler-product-Q-Sigma}
The motivic Euler product
\[ \prod_{v\in \CCC}Q_{\Sigma}((T_{\alpha})_{\alpha\in \Sigma(1)}) \]
converges for $|\mathbf{T}| < \LL^{-\frac{1}{2}}.$

Moreover,
its value at $T_\alpha = \mathbf L^{-1}$ is given by the convergent motivic Euler product
\[
\prod_{v\in \mathscr C}
\left (
( 1 - \mathbf L^{-1} )^{\rg ( \Pic ( X_\Sigma ))}
\frac{\left [X_\Sigma \right ]}{\mathbf L^{\dim ( X_\Sigma )} }
\right ) 
\in \widehat{\mathscr M_k}^{\dim}. 
\]
\end{lemma}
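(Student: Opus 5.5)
We prove the two assertions in turn. Convergence comes from a low-degree analysis of $Q_\Sigma$ combined with the convergence criterion for motivic Euler products of \cite{bilu2023MAMS}. The special value comes from a point count of $X_\Sigma$ by torus orbits.

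\emph{Convergence.} First we determine the low-degree terms of $Q_\Sigma(\mathbf X)$. By \eqref{equation:def-of-Q-Sigma},
\[
Q_\Sigma(\mathbf X)=\sum_{\sigma\in\Sigma}\prod_{\alpha\in\sigma(1)}X_\alpha\prod_{\beta\notin\sigma(1)}(1-X_\beta).
\]
We expand this polynomial in the monomials $\mathbf X^{\bm e}$ with $\bm e\in\{0,1\}^{\Sigma(1)}$. The constant term comes only from $\sigma=\{\bm 0\}$ and equals $1$. The coefficient of $X_\alpha$ is $1-1=0$: the ray $\alpha$ contributes $+1$ and $\sigma=\{\bm 0\}$ contributes $-1$. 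Hence $Q_\Sigma-1$ only involves monomials of total degree at least $2$.

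We then apply the standard convergence criterion for motivic Euler products, \cite[Proposition 4.5.1]{bilu2023MAMS} or its multivariate form in \cite{bilu-das-howe2022zeta}. For a product $\prod_v(1+\sum_{\bm e\neq 0}a_{\bm e}\mathbf T^{\bm e})$ with finitely many non-zero coefficients, each of dimension $\le 0$ and with $a_{\bm e}=0$ whenever $|\bm e|=1$, the criterion gives convergence for $|\mathbf T|<\LL^{-1/2}$. The reason is that, on the symmetric product over $\CCC$ attached to a partition, each point of $\CCC$ contributes dimension $1$ but carries total degree at least $2$. To make the variables uniform we work with the one-variable specialisation $T_\alpha=T$ (direction $\lambda=\sum_\alpha\mathfrak D_\alpha^\vee$). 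The coefficient of $T^N$ then has dimension at most $N/2$, which is exactly the required bound.

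\emph{Special value.} Since $1/2<1$, we may evaluate at $T_\alpha=\LL^{-1}$, and the value is the Euler product of the local values $Q_\Sigma(\LL_v^{-1},\dots,\LL_v^{-1})$. We compute this local value using the orbit decomposition $X_\Sigma=\bigsqcup_{\sigma}O_\sigma$ with $O_\sigma\simeq\GG_m^{n-\dim\sigma}$. This gives
\[
[X_\Sigma]\LL^{-n}=\sum_\sigma(1-\LL^{-1})^{n-\dim\sigma}\LL^{-\dim\sigma},
\]
and multiplying by $(1-\LL^{-1})^{r}$, where $r=|\Sigma(1)|-n$, yields
\[
(1-\LL^{-1})^{r}[X_\Sigma]\LL^{-n}=\sum_\sigma\LL^{-\dim\sigma}(1-\LL^{-1})^{|\Sigma(1)|-\dim\sigma}.
\]
On the other hand, regularity gives $|\sigma(1)|=\dim\sigma$, so evaluating $Q_\Sigma$ at $X_\alpha=\LL^{-1}$ gives exactly the right-hand side of this identity. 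We therefore obtain the identity
\[
Q_\Sigma(\LL^{-1},\dots,\LL^{-1})=(1-\LL^{-1})^{\rg\Pic(X_\Sigma)}[X_\Sigma]\LL^{-\dim X_\Sigma}
\]
in $\MMM_k$, and fibrewise over $\CCC$ with $\LL_{\kappa(v)}$.

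\emph{Main obstacle.} The delicate point is the passage from this pointwise identity to an identity of motivic Euler products. Substituting a value into a motivic Euler product and taking the product of the local values is not formal in general. We would justify it by noting that the coefficients of $Q_\Sigma$ are constant families, namely integers times $\LL$-powers pulled back from $k$. For such families, specialising the variables to powers of $\LL$ commutes with the formation of symmetric products; this is \cite[Proposition 3.9.2.4]{bilu2023MAMS} on compatibility with $\LL$-power substitutions. Convergence of the resulting product in $\widehat{\MMM_k}^{\dim}$ follows from the first part, since $\LL^{-1}$ lies strictly inside the radius of convergence $\LL^{-1/2}$.
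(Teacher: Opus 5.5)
Your proposal is correct and follows essentially the same route as the paper: the absence of linear terms in $Q_\Sigma-1$ gives convergence for $|\mathbf T|<\LL^{-1/2}$, and the special value comes from evaluating $Q_\Sigma$ at $\LL^{-1}$ and using the torus-orbit decomposition $[X_\Sigma]=\sum_\sigma(\LL-1)^{n-\dim\sigma}$ together with $|\Sigma(1)|=n+\rg\Pic(X_\Sigma)$. The only differences are that you check the vanishing of the linear coefficients by hand, where the paper cites Batyrev--Tschinkel (Proposition 2.2.3), and that you make explicit the compatibility of motivic Euler products with $\LL$-power substitution, which the paper uses implicitly when passing from the local value to the Euler product.
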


\begin{proof}
Recall that $Q_\Sigma$
was defined 
by \eqref{equation:def-of-Q-Sigma}. 
By \cite[Proposition 2.2.3]{batyrev-tschinkel1995anisotropic-toric}, the polynomial $Q_\Sigma -1$ contains only monomials of degree $\geq 2$, from which we can deduce the convergence statement. 
Moreover, 
\begin{align*}
 Q_{\Sigma} ( \mathbf L^{-1} )
 & =
 \sum_{\sigma \in \Sigma}
  \prod_{\alpha \in \sigma ( 1 )} 
  \mathbf L^{-1}
   \prod_{\alpha \notin \sigma ( 1 )}
   ( 1 - \mathbf L^{-1} )\\
& =
\mathbf L^{-| \Sigma ( 1 )|}
 \sum_{\sigma \in \Sigma}
 ( \mathbf L - 1 )^{| \Sigma ( 1 ) | - | \sigma ( 1 ) |} \\
& =
\mathbf L^{-\rg ( \Pic ( X_\Sigma ) ) - \dim ( X_\Sigma ) }
( \mathbf L - 1 )^{\rg ( \Pic ( X_\Sigma )}
\underbrace{\sum_{\sigma \in \Sigma}
( \mathbf L - 1 )^{\dim ( X_\Sigma ) - | \sigma ( 1 ) |}}_{=[X_\Sigma]}
\\ 
& =
\left ( 1 - \mathbf L^{-1} \right )^{\rg ( \Pic ( X_\Sigma ))}
\frac{\left [X_\Sigma \right ]}{\mathbf L^{\dim ( X_\Sigma )} } 
\end{align*}
where we used the relation 
$| \Sigma ( 1 ) | = \rg ( \Pic ( X_\Sigma ) ) + \dim ( X_\Sigma )$.
\end{proof}

By \cref{lemma:convergence-Euler-product-Q-Sigma},
we know that $\prod_{v\in \CCC}Q_{\Sigma}(( \mathbf L^{-1} T_{\alpha})_{\alpha\in \Sigma(1)}) $
converges for $|\mathbf{T}| < \LL^{\frac{1}{2}}$. 
It is also well-known that $(1-T)(1- \mathbf L T ) Z^\Kapr_\CCC ( \mathbf T ) $ is a polynomial,
therefore $(1-T) Z^\Kapr_\CCC ( \mathbf L^{-1} T ) $ converges for $| T | < \mathbf L$. 
Together with
\cref{lemma:partial-computation-of-P(T)},
these facts ensures that $\mathfrak L_{N,\Lambda , \mathfrak a } = P $ satisfies the assumptions of \cref{lemma-control-L-functions-by-integration} for all $0 < \varepsilon \leqslant \frac{1}{2}$. 
In particular, thanks to \cref{cor-control-L-functions-by-integration--residue} we get the following,
where $\alpha^* ( X_\Sigma )$ is the value of $ \mathfrak X_{N, \Upsilon^\vee } ( \lambda_0 )  $ in our setting. 

\begin{theorem}
    \label{theorem:final-residue-type-result}
    There exists an $\eta > 0$ such that the series 
    \[
    ( 
        1 - ( \mathbf L T )^{a_{\lambda_0}} 
    )^{\rk ( \Pic ( X_\Sigma )}
    \zeta_H^{\mathrm{mot}}  \left ( T ^{\omega_{X_\Sigma}^{-1}} \right ) 
    \]
    converges for $| T | < \mathbf L^{-1 + \eta}$.
    Moreover, 
    \[
     \left ( 
     ( 
        1 - ( \mathbf L T )^{a_{\lambda_0}} 
    )^{\rk ( \Pic (X_\Sigma) )}
    \zeta_H^{\mathrm{mot}}  \left ( T ^{\omega_{X_\Sigma}^{-1}} \right ) 
    \right )_{T = \mathbf L^{-1}} 
    =
    a_{\lambda_0}^{\rk ( \Pic ( X_\Sigma ) )}
    \alpha^* ( X_\Sigma ) 
    \gamma^{\mathrm{mot}}_H ( X_\Sigma ) 
    \]
    in $\widehat{\mathscr M_k}^{\dim}$
where 
\[
 \gamma^{\mathrm{mot}}_H ( X_\Sigma )
 =
 \mathbf L^{n(1-g)} 
 \left ( 
 \frac{[\PPic^0 ( \CCC ) ] \mathbf L^{-g}}{1 - \mathbf L^{-1}} 
 \right )^{\rk ( \Pic ( X_\Sigma ) )}
\prod_{v\in \mathscr C}
\left (
( 1 - \mathbf L^{-1} )^{\rg ( \Pic ( X_\Sigma ))}
\frac{\left [X_\Sigma \right ]}{\mathbf L^{\dim ( X_\Sigma )} }
\right ) 
\in \widehat{\mathscr M_k}^{\dim}. 
\]
\end{theorem}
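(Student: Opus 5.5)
Starting from the expression of $\zeta_H^{\mathrm{mot}}(\mathbf T)$ obtained just before \cref{lemma:partial-computation-of-P(T)},
\[
\zeta_H^{\mathrm{mot}} ( \mathbf T )
=
( \mathbf L - 1 )^n
\int_{D ( \underline{\mathcal X_* ( U )}_k ) }
\mathfrak L_{ \mathbf Z^{\Sigma ( 1 )} , \CEff ( X )^\vee } ( ( \langle \bm z , \rho_\alpha \rangle T_\alpha )_{\alpha} )
P ( ( \langle \bm z , \rho_\alpha \rangle T_{\alpha} )_{\alpha} )
\mathrm d \bm z ,
\]
the plan is to recognise this, after the substitution $T_\alpha \mapsto \mathbf L T_\alpha$, as exactly the series $f_1$ of \cref{lemma-control-L-functions-by-integration}. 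We work with the exact sequence $0\to \Pic(X_\Sigma)^\vee \to \PL(\Sigma)^\vee \to \mathcal X_*(U)\to 0$, with $\Lambda = \CEff(X_\Sigma)^\vee$, $\hat j = D(\gamma^\vee)$ and $\mathfrak a$ the coefficients of $P$. The first step is to make this identification rigorous. Using \cref{lemma:partial-computation-of-P(T)}, we replace $P$, up to finitely many coefficients, by the character-free series
\[
[\PPic^0(\CCC)]^{-n}\prod_\alpha (1-T_\alpha)\, Z^{\Kapr}_\CCC(\mathbf L^{-1}T_\alpha)\,\prod_v Q_\Sigma(\mathbf L^{-1}T_\alpha),
\]
evaluated at $\langle \bm z,\rho_\alpha\rangle T_\alpha$. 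The finitely many exceptional coefficients only change the result by a polynomial, which does not affect either the convergence or the residue.

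Next, we check the hypotheses of \cref{lemma-control-L-functions-by-integration}. Convergence of $P$ for $|\mathbf T|<\mathbf L^{\varepsilon}$ with $0<\varepsilon\le \tfrac12$ follows from \cref{lemma:convergence-Euler-product-Q-Sigma} together with the fact that $(1-T)(1-\mathbf LT)Z^{\Kapr}_\CCC(T)$ is a polynomial, as already noted. The condition $\Gamma^\vee\cap\Lambda^\vee=\{0\}$ reads $\gamma(\mathcal X^*(U))\cap \CEff(X_\Sigma)=\{0\}$. This holds because a nonzero character $\bm m$ with $\langle \bm m,\rho_\alpha\rangle\ge 0$ for all rays would be nonnegative on the complete fan, which is impossible. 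The lemma then gives that $f_1 - P(\bm 1)\,\mathfrak L_{M,\Lambda\cap M_{\mathbf R}}$ is admissible of multiplicity at least $-\rk\Pic(X_\Sigma)+1$.

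We then specialise along $\lambda_0=\omega_{X_\Sigma}^{-1}=\sum_\alpha\mathfrak D_\alpha$, which lies in the interior of $\Lambda^\vee$, and apply \cref{cor-control-L-functions-by-integration--residue}. Since each $\mathbf T^{\bm d}$ becomes $T^{\langle\lambda_0,\bm d\rangle}$, the rescaling gives the factor $(\mathbf LT)$. Convergence of $(1-(\mathbf LT)^{a_{\lambda_0}})^{r}\zeta_H^{\mathrm{mot}}$ for $|T|<\mathbf L^{-1+\eta}$ should follow from three facts: the elementary admissible pieces of nonnegative multiplicity converge beyond $|\mathbf LT|=1$; the lower-multiplicity pieces have pole order below $r$; and the main term is a rational $\mathfrak L$-function of pole order exactly $r$ at $\mathbf LT=1$, where $r=\rk\Pic(X_\Sigma)$. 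The residue is then $a_{\lambda_0}^{r}\,\alpha^*(X_\Sigma)\,(\mathbf L-1)^nP(\bm 1)$.

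The last step is to compute $(\mathbf L-1)^nP(\bm 1)$ and identify it with $\gamma^{\mathrm{mot}}_H$. We use the Kapranov residue $\bigl((1-T)Z^{\Kapr}_\CCC(\mathbf L^{-1}T)\bigr)_{T=1}=[\PPic^0(\CCC)]\mathbf L^{1-g}/(\mathbf L-1)$, taken to the power $|\Sigma(1)|=n+r$, and the value of $\prod_vQ_\Sigma$ at $\mathbf L^{-1}$ from \cref{lemma:convergence-Euler-product-Q-Sigma}. Dividing by $[\PPic^0]^n$ leaves $[\PPic^0]^r$, and the powers of $\mathbf L$ and of $\mathbf L-1$ combine into $\mathbf L^{n(1-g)}\bigl([\PPic^0]\mathbf L^{-g}/(1-\mathbf L^{-1})\bigr)^r$.

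I expect the main obstacle to be the first step. It requires justifying that the motivic integrals, Euler products and dimensional-completion limits commute in the way \cref{lemma-control-L-functions-by-integration} assumes. In particular, the series $\mathfrak a$ must genuinely be a character-free function on $\Lambda\cap N$ after the $\bm\chi$-integration, so that only the $\bm z$-dependence through $\hat j$ remains. I would handle this coefficientwise, via \cref{lemma-motivic-functions-are-defined-on-points} and the local integration operator of \cref{sect:local_integration_operator}.
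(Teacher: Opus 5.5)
Your proposal is correct and follows the paper's own route. You rescale so that the normalised zeta function becomes $(\mathbf L-1)^n f_1$ for the sequence $0\to\Pic(X_\Sigma)^\vee\to\PL(\Sigma)^\vee\to\mathcal X_*(U)\to 0$, verify the hypotheses on $P$ via \cref{lemma:partial-computation-of-P(T)}, \cref{lemma:convergence-Euler-product-Q-Sigma} and the rationality of $Z^{\Kapr}_\CCC$, and read off the value at $T=\mathbf L^{-1}$ from \cref{cor-control-L-functions-by-integration--residue}. You also correctly make explicit two steps the paper leaves implicit, namely the verification of $\Gamma^\vee\cap\Lambda^\vee=\{0\}$ from completeness of $\Sigma$ and the computation $(\mathbf L-1)^nP(\mathbf 1)=\gamma^{\mathrm{mot}}_H(X_\Sigma)$.
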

To deduce \cref{thm-intro-stabilisation-multi-height} from the previous discussion, it suffices to apply \cref{lemma-from-convergent-series-to-stabilisation-of-coeffs}
to 
\[
F ( \mathbf T )
=
\frac{1}{[\PPic^0 ( \CCC )]^n}  
\left ( \prod_{\alpha \in \Sigma ( 1 ) } ( 1 - \mathbf L T_\alpha ) Z^\Kapr_\CCC (  T_\alpha ) \right )
\left ( 
\prod_{v\in \mathscr C}
Q_{\Sigma} (  \mathbf T )
\right ) 
\]
with $\bm \rho = ( 1 , ... , 1 )$.


\bibliography{references}

@article{batyrev-tschinkel1995anisotropic-toric,
  title={Rational points of bounded height on compactifications of anisotropic tori},
  author={Batyrev, Victor V and Tschinkel, Yuri},
  journal={International Mathematics Research Notices},
  volume={1995},
  number={12},
  pages={591--635},
  year={1995},
  publisher={Oxford University Press}
}

@article{batyrev-tschinkel1998manin-toric,
  title={Manin's conjecture for toric varieties},
  author={Batyrev, Victor V and Tschinkel, Yuri},
  journal={Journal of Algebraic Geometry},
  volume={7},
  number={1},
  pages={15--53},
  year={1998},
  publisher={American Mathematical Society}
}

@article{bilu2023MAMS,
	author = {Margaret Bilu},
	journal = {Memoirs of the American Mathematical Society},
	number = {1396},
	title = {Motivic {E}uler products and motivic height zeta functions},
	volume = {282},
	year = {2023},
    publisher={American Mathematical Society}}

@article{bourqui2011MAMS,
  title={Fonction z{\^e}ta des hauteurs des vari{\'e}t{\'e}s toriques non d{\'e}ploy{\'e}es},
  author={Bourqui, David},
  journal = {Memoirs of the American Mathematical Society},
  volume={211},
  number={994},
  year={2011},
  publisher={American Mathematical Society}
}

@article{chambert-loir-loeser2016motivic,
  title={Motivic height zeta functions},
  author={Chambert-Loir, Antoine and Loeser, Fran{\c{c}}ois},
  journal={American Journal of Mathematics},
  pages={1--59},
  year={2016},
  publisher={JSTOR}
}

@book{chambert-loir-nicaise-sebag2018motivic,
  title={Motivic integration},
  author={Chambert-Loir, Antoine and Nicaise, Johannes and Sebag, Julien},
  year={2018},
  publisher={Springer}
}

@article{cluckers-motivic-mellin,
  title={Motivic {M}ellin transforms},
  author={Cluckers, Raf and Loeser, Fran{\c{c}}ois and Nguyen, Kien Huu and Vermeulen, Floris},
  journal={arXiv preprint arXiv:2412.17764},
  year={2024}
}

@article {faisant-additive-groups,
    AUTHOR = {Faisant, Lo\"is},
     TITLE = {Geometric {B}atyrev-{M}anin-{P}eyre for equivariant
              compactifications of additive groups},
   JOURNAL = {Beitr. Algebra Geom.},
  FJOURNAL = {Beitr\"age zur Algebra und Geometrie. Contributions to Algebra
              and Geometry},
    VOLUME = {64},
      YEAR = {2023},
    NUMBER = {3},
     PAGES = {783--850},
}

@article{faisant2025motivic-distribution,
  title={Motivic distribution of rational curves and twisted products of toric varieties},
  author={Faisant, Lo{\"\i}s},
  journal={Algebra \& Number Theory},
  volume={19},
  number={5},
  pages={883--965},
  year={2025},
  publisher={Mathematical Sciences Publishers}
}

@BOOK{SGA1,
    AUTHOR = "Grothendieck, Alexander",
    TITLE = "Rev\^etements \'etales et groupe fondamental ({SGA} 1)",
    PUBLISHER = "Springer-Verlag",
    YEAR = "1971",
    SERIES = "Lecture notes in mathematics",
    VOLUME = "224"
}

@misc{SGA3-II,
  title={S{\'e}minaire de g{\'e}om{\'e}trie alg{\'e}brique du {B}ois {M}arie 1962-64. {S}ch{\'e}mas en groupes ({SGA} 3). {T}ome {II}: Groupes de Type Multiplicatif, et Structure des Schemas en Groupes Generaux },
  author={Demazure, M and Grothendieck, A and Artin, M and Bertin, JE and Gabriel, P and Raynaud, M and Serre, JP},
  year={1962--1964}
}

@article {hrushovski-kazhdan,
    AUTHOR = {Hrushovski, Ehud and Kazhdan, David},
     TITLE = {Motivic {P}oisson summation},
   JOURNAL = {Mosc. Math. J.},
  FJOURNAL = {Moscow Mathematical Journal},
    VOLUME = {9},
      YEAR = {2009},
    NUMBER = {3},
     PAGES = {569--623, back matter},
      ISSN = {1609-3321,1609-4514},
   MRCLASS = {11U09 (03C60 11R56 14E18)},
  MRNUMBER = {2562794},
MRREVIEWER = {H.\ Dugald\ Macpherson},
       DOI = {10.17323/1609-4514-2009-9-3-569-623},
       URL = {https://doi.org/10.17323/1609-4514-2009-9-3-569-623},
}

@article{bourqui2011quadriquesintrinseques,
  title={La conjecture de {M}anin g{\'e}om{\'e}trique pour une famille de quadriques intrins{\`e}ques},
  author={Bourqui, David},
  journal={manuscripta mathematica},
  volume={135},
  number={1},
  pages={1--41},
  year={2011},
  publisher={Springer}
}

@article{bourqui2009produit,
  title={Produit eul{\'e}rien motivique et courbes rationnelles sur les vari{\'e}t{\'e}s toriques},
  author={Bourqui, David},
  journal={Compositio Mathematica},
  volume={145},
  number={6},
  pages={1360--1400},
  year={2009},
  publisher={London Mathematical Society}
}

@article{cluckers-halupczok2022evaluation,
	author = {Cluckers, Raf and Halupczok, Immanuel},
	journal = {Advances in Mathematics},
	pages = {108635},
	publisher = {Elsevier},
	title = {Evaluation of motivic functions, non-nullity, and integrability in fibers},
	volume = {409},
	year = {2022}}

@article{bilu-howe2021motivic-statistics,
	author = {Bilu, Margaret and Howe, Sean},
	journal = {Algebra \& Number Theory},
	number = {9},
	pages = {2195--2259},
	publisher = {Mathematical Sciences Publishers},
	title = {Motivic {E}uler products in motivic statistics},
	volume = {15},
	year = {2021}}

@article{bilu-das-howe2022zeta,
  title={Zeta statistics and {H}adamard functions},
  author={Bilu, Margaret and Das, Ronno and Howe, Sean},
  journal={Advances in Mathematics},
  volume={407},
  pages={108556},
  year={2022},
  publisher={Elsevier}
}

@article{faisant2025universaltorsors,
	author = {Faisant, Lo{\"\i}s},
	journal = {arXiv preprint arXiv:2502.11704},
	title = {Motivic counting of rational curves with tangency conditions via universal torsors},
	year = {2025}}

@article{milne,
        author = {Milne, James S.},
        title = {Lectures on Etale Cohomology},
        url = {https://www.jmilne.org/math/CourseNotes/LEC.pdf},
        year = {2013}}

\end{document}